\documentclass[11pt]{article}

\usepackage[utf8]{inputenc}
\usepackage[T1]{fontenc}
\usepackage[margin=1in]{geometry}
\usepackage{amsthm,amsmath,amsfonts,amssymb}
\usepackage[authoryear]{natbib}
\bibpunct{(}{)}{;}{a}{,}{,}
\usepackage{xurl}
\usepackage[colorlinks,citecolor=blue,urlcolor=blue,linkcolor=blue]{hyperref}
\usepackage{graphicx}
\usepackage{tikz}
\usepackage{booktabs}
\usepackage{bm}
\usepackage{bbm}
\usepackage{siunitx}
\usepackage{enumitem}

\hypersetup{
  pdftitle={Combining Concurrent and Historical Functional Linear Regression},
  pdfauthor={Alois Kneip, Dominik Liebl, and Sven Otto},
  pdfsubject={Functional data analysis},
  pdfkeywords={functional data analysis, functional linear model, concurrent effect, identifiability, gait analysis}
}

\numberwithin{equation}{section}

\theoremstyle{plain}
\newtheorem{theorem}{Theorem}[section]
\newtheorem{lemma}[theorem]{Lemma}

\theoremstyle{definition}

\newtheorem{remark}[theorem]{Remark}
\newtheorem*{I-assumption}{Identification Assumptions}
\newtheorem*{I-assumption-cont}{Identification Assumptions (continued)}
\newtheorem*{A-assumption}{Asymptotic Assumptions}
\newtheorem*{A-assumption-cont}{Asymptotic Assumptions (continued)}

\DeclareMathOperator{\E}{\mathbb{E}}
\DeclareMathOperator{\Cov}{Cov}

\DeclareMathOperator{\V}{\mathbb{V}}

\DeclareMathOperator*{\argmin}{arg\,min}
\newcommand{\dd}{\,\mathrm{d}}

\title{Combining Concurrent and Historical Functional Linear Regression}
\author{
  Alois Kneip\textsuperscript{1}, Dominik Liebl\textsuperscript{1}, and Sven Otto\textsuperscript{2}\\[0.6em]
  \small\textsuperscript{1}Institute of Finance and Statistics, University of Bonn\\
  \small\textsuperscript{2}Department of Econometrics and Statistics, University of Cologne
}
\date{}

\begin{document}

\maketitle

\begin{abstract}
We study a function-on-function linear regression model in which the response at time $t$ depends on both the past trajectory of a predictor and its concurrent value. The model combines an $L^2$-historical effect with a point-evaluation effect, and these two coefficient functions are not automatically identifiable. We characterize the resulting non-identifiability and show that the concurrent and historical effects are separately identifiable whenever the covariance eigenfunctions of the covariance operator of the predictor are not pointwise square-summable. This mild and novel condition prevents the concurrent point evaluation from being represented by an $L^2$-historical effect.

Building on an orthogonalized representation of the predictor process, we propose a smoothing-spline estimator for both coefficient functions and establish consistency rates. The rates reveal an interesting trade-off between path regularity and eigenvalue decay: smoother predictor trajectories lead to faster convergence of the historical-effect estimator, whereas rougher trajectories lead to faster convergence of the concurrent-effect estimator. Simulation studies and two real-data applications demonstrate the practical importance of disentangling concurrent and historical effects.
\end{abstract}

\noindent\textbf{Keywords:}
functional data analysis; functional linear model; concurrent effect; identifiability; gait analysis.

\section{Introduction}\label{sec:intro}

In many function-on-function regression applications, both the response and the predictor are observed as functions over time.
A natural temporal restriction is that the response $Y(t)$ at time $t\in[0,1]$ may depend on the predictor trajectory observed up to time $t$, but not on future values $X(s)$ with $s>t$.
In this context, \cite{malfait2003} proposed the \emph{historical functional linear model},
\begin{align}\label{eq:hist}
    Y(t)&=\alpha_0(t)+\int_0^t\beta(t,s)X(s)\, ds +\varepsilon(t),
\end{align}
where $\alpha_0(t)$ denotes the intercept function, $\beta(t,s)$ the bivariate regression coefficient function with the restriction $\beta(t,s)=0$ for all $s>t$, and $\varepsilon(t)$ the statistical error function.
The historical model can be viewed as a temporally restricted version of the functional linear regression model, whose asymptotic theory has been developed in important contributions such as \citet{Yao_Mueller_Wang_2005}, \citet{Hall_Horowitz_2007}, and \citet{crambes2009}.
Using the unit interval $[0,1]$ as the standardized time domain is, of course, without loss of generality.

The historical functional linear model has become one of the most widely used functional linear regression approaches, with successful applications in biomechanics, medicine, and environmental sciences \citep[see, for instance,][among many others]{Pomann_et_al_2016,Wang_Review_2016,Boudreault_et_al_2019,Christgau_AOS_2023,Meyer_HFDA_Review_2024}.
Because the integral in \eqref{eq:hist} extends to the current time $t$, it is sometimes tempting to interpret the historical functional linear model as including a concurrent contribution of $X(t)$ to $Y(t)$ \citep[cf.][]{Bernardi_et_al_JCGS_2022,Meyer_HFDA_Review_2024}.
However, the integral term cannot represent a separate pointwise effect at $s=t$, since a single time point has Lebesgue measure zero.
Accordingly, \eqref{eq:hist} captures effects accumulated over intervals of past predictor values, including values arbitrarily close to $t$, but it does not include a separate pointwise concurrent component.
In many applications of historical functional regression, such a concurrent effect, distinct from the effect accumulated over past values, is nonetheless plausible---as shown in our applications in Section \ref{sec:applications}.

This limitation is particularly problematic because past values $X(s)$ with $s<t$ close to $t$ are typically strongly correlated with the concurrent value $X(t)$.
As a consequence, if a concurrent effect is present but not modeled explicitly, the historical coefficient surface $\beta(t,s)$ estimated under the historical-only model \eqref{eq:hist} is generally subject to an omitted-variable bias absorbing the concurrent effect so that it need not represent the historical effect of interest.
Conversely, if a historical effect is present but not modeled explicitly, the coefficient function $\alpha(t)$ estimated under the concurrent-only model
\begin{equation}\label{eq:conc}
    Y(t)=\alpha_0(t)+\alpha(t)X(t)+\varepsilon(t),
\end{equation}
is generally also subject to an omitted-variable bias that combines the direct concurrent effect with the historical contributions of $X$.
The statistical problem is therefore not merely predictive.
It concerns the possibility to separately interpret the two different types of functional effects: an integral effect over the past and a pointwise effect at the present time. This issue is particularly relevant in applications where the concurrent effect is of substantive interest, for example, in our biomechanical application in Section~\ref{sec:appl2}, where the concurrent effect quantifies the direct influence of the predictor on the response at the same time point, while the historical effect captures the influence of past values of the predictor on the response at time $t$.

To address this problem, we consider the following functional linear regression model, which combines the historical-only model \eqref{eq:hist} with the concurrent-only model \eqref{eq:conc}:
\begin{align}\label{eq:combined}
    Y(t)&=\alpha_0(t)+\alpha(t)X(t)+\int_0^t\beta(t,s)X(s)\, ds+\varepsilon(t),
\end{align}
where the concurrent coefficient function $\alpha(t)$ quantifies the effect of $X(t)$ on $Y(t)$ at time $t\in[0,1]$, while $\beta(t,s)$ quantifies the historical effect of $X(s)$ on $Y(t)$ for $s<t$.
At first sight, model~\eqref{eq:combined} may look like a simple extension of the historical functional linear model by an additional concurrent regressor.
This view is misleading.
The term $X(t)$ is a point evaluation of the functional predictor, whereas the historical component is an $L^2$-integral effect.
Separate identifiability of the two effects is not automatic and belongs to the core challenges of this work.

Indeed, without identifiability, there may exist an alternative coefficient surface $\breve\beta(t,s)$ such that, for each $t$,
$\int_0^t \breve\beta(t,s)X(s)\,ds
=
\alpha(t)X(t)+\int_0^t\beta(t,s)X(s)\,ds$,
so that different parameter pairs produce the same fitted values.
The central difficulty is therefore not only the estimation of a larger model, but whether the decomposition in model \eqref{eq:combined} is actually identifiable. That is, whether a point-evaluation effect can be distinguished from an $L^2$-historical effect generated by the same predictor process.
In this paper, we derive a novel identification theory for this decomposition and show when the concurrent and historical components are separately identifiable.
In particular, we show that the concurrent coefficient function $\alpha(t)$ and the historical coefficient function $\beta(t,s)$ are identified if the eigenfunctions of the covariance operator of the predictor function $X$ are not pointwise square-summable.

Model~\eqref{eq:combined} is related to the point of impact regression models proposed by \cite{mckeague2010} and \cite{kneip2016}. While existing works in the points of impact literature, however, use a Reproducing Kernel Hilbert Space (RKHS) to express the functional effect and the points of impact effect in a single, \emph{joint} RKHS-valued parameter \citep[see, e.g.,][]{Kupresanin2010,Berrendero_RKHS_2018,Berrendero_RKHS_2019,kneip2020}, our focus is different.
Rather than seeking a joint representation, we establish conditions under which both effects are \emph{separately} identifiable.
This general type of identification problem was so far, to the best of our knowledge, only considered by \citet{kneip2016}, who introduce \emph{specific local variation} as an identification condition---a condition, which is particularly plausible for rough functional predictors.
In this work, we revisit the identification problem and show that identification is achievable under a pointwise non-square-summability condition on the eigenfunctions of the covariance operator of the predictor function $X$.
This new and rather mild condition differs from the local-variation assumption and readily applies to both smooth and rough functional predictors. Moreover, by contrast to \cite{kneip2016}, we consider a functional response $Y(t)$ and develop identification and estimation theory for arbitrary time points $t\in[0,1]$.

In the literature on functional linear regression, there are several approaches that consider the historical effects with a focus on the recent past. \citet{Sentuerk_Mueller_2010} and \citet{Kim_Sentuerk_Li_2011} propose a functional varying coefficient model of the form
$\mathbb{E}(Y(t)|X)=\alpha_0(t)+b(t)\int_0^{r}\gamma(u)X(t-u)\,du$
that treats the recent (small $r>0$) historical effect process $\int_0^{r}\gamma(u)X(t-u)\,du$ as the functional predictor.
This construction captures effects of the recent past, but it does not introduce a separate parameter for a concurrent contribution of $X(t)$ beyond the recent-history term, so that the concurrent effect and the effect of the recent past may be difficult to disentangle in interpretation.
More recently, \citet{Luo_et_al_Biometrics_2022} and \citet{Bernardi_et_al_JCGS_2022} suggest regularized historical functional linear regression models in which the coefficient surface $\beta(t,s)$ is sparse, for example with nonzero effects restricted to a band $|t-s|\le r$ for some possibly small $r>0$.
\citet{Bernardi_et_al_JCGS_2022} interpret such banded historical effects as a hybrid between historical and concurrent regression. In contrast, our identification results show that the concurrent component $\alpha(t)$ in \eqref{eq:combined} can be identified \emph{separately} from the historical surface $\beta(t,s)$.

The combined model \eqref{eq:combined} contains the \emph{varying-coefficient model} \citep{hastie1993}, also known as the \emph{concurrent functional linear regression model} \citep[][Ch.~14]{RS_2005_book} as the special case $\beta(t,s)=0$ for all $s,t\in[0,1]$. The concurrent model \eqref{eq:conc} is widely used in applications \citep[see, for instance,][]{Zhang_et_al_2011,Li_et_al_2016,Chu_Runze_Reimherr_2016,Ghosal_et_al_2020,Torti_et_al_2021,Petrovich_et_al_2023}.
Recent methodological extensions include concurrent regression with dependent and noisy functional predictors \citep{Ding_Yao_Zhang_2023}, simultaneous inference for concurrent regressions with non-smooth features and monotonicity \citep{Chang_McKeague_2024}, and models allowing concurrent and nonlinear dynamic effects from multiple predictors \citep{Jia_et_al_2024}.
However, if the true relationship contains a historical component as in \eqref{eq:combined}, then fitting the purely concurrent model \eqref{eq:conc} leads to a coefficient function that generally combines the direct concurrent effect with historical contributions of $X$.
This misspecification issue and its implications for the interpretation of $\alpha(t)$ appear not to have been analyzed explicitly in the concurrent regression literature.

Beyond identifiability, we develop estimation and asymptotic theory for model~\eqref{eq:combined}.
The concurrent term $\alpha(t)X(t)$ involves point evaluation, while the historical term is an integral effect, and this distinction is reflected in the estimation theory.
Our estimation strategy is based on an orthogonalized representation of the predictor that separates the concurrent direction $X(t)$ from the remaining historical variation.
This leads to a scalar-on-function regression problem for the historical coefficient at each fixed time point $t$, followed by a plug-in estimation of the concurrent coefficient function $\alpha(t)$.
Although the resulting estimator uses smoothing spline methodology from \citet{crambes2009}, our asymptotic theory is fundamentally different from existing functional linear regression theory, because the orthogonalized historical predictor is itself unknown and must be estimated in a pre-estimation step. This renders the asymptotic theory highly nonstandard as the usual assumptions made on the original predictor process $X$ affect the actually relevant, empirically estimated orthogonalized predictor in a highly nontrivial way. Our asymptotic analysis shows that the resulting convergence rates depend on the path regularity of $X$.
Rougher trajectories yield faster convergence for the concurrent component, whereas smoother trajectories yield faster convergence for the historical component.

The paper and its contributions are structured as follows.
Section~\ref{sec:identification} develops our novel identification results.
Section~\ref{sec:estimation} introduces the estimators for the coefficient functions $\alpha_0(t)$, $\alpha(t)$ and $\beta(t,s)$ of model \eqref{eq:combined}.
Section~\ref{sec:consistency} presents the asymptotic theory showing that estimating the historical-effect function profits from smooth predictor trajectories, while estimating the concurrent-effect function benefits from rougher trajectories.
Section~\ref{sec:simulations} investigates finite-sample performance in simulations, and Section~\ref{sec:applications} presents two biomechanical applications to gait analysis data.
Section~\ref{sec:conclusion} concludes.
Proofs of the theoretical results are provided in the Supplementary Material \cite{SupplementaryPaper}.

\section{Identifiability of the Model Parameters} \label{sec:identification}

In this section, we discuss conditions under which the model components of model~\eqref{eq:combined} are identified.
Throughout the paper, $\langle f,g\rangle=\int_0^1 f(s)g(s)\,ds$ denotes the $L^2[0,1]$ inner product with induced norm $\|f\|=\langle f,f\rangle^{1/2}$.
Let $\Delta=\{(t,s):0\leq s\leq t\leq 1\}$ denote the historical domain of the coefficient surface $\beta$.
For each $t\in[0,1]$, define the historical coefficient at time $t$, extended to $[0,1]$, by
$$
    \beta_t(s)=
    \begin{cases}
        \beta(t,s), & 0\leq s\leq t,\\
        0, & t<s\leq 1.
    \end{cases}
$$
This allows us to write the historical component as the $L^2[0,1]$ inner product $\langle \beta_t,X^\mu\rangle$.
For convenience, we consider the centered curves
$$
X^\mu(t) := X(t) - \E(X(t))
\quad\text{and}\quad
Y^\mu(t) := Y(t) - \E(Y(t)).
$$
Under the exogeneity condition imposed below, $\E(\varepsilon(t))=0$, so centering removes the intercept component $\alpha_0(t)$ from model~\eqref{eq:combined}.
Using the inner product notation, we can write model~\eqref{eq:combined} as
\setlength{\arraycolsep}{2pt}
\begin{equation}\label{eq:operator}
\begin{array}{rcccl}
Y^\mu(t) &=& \alpha(t) X^\mu(t) + \langle \beta_t, X^\mu \rangle &+& \varepsilon(t) \\
         &=& \mathcal{L}(X^\mu)(t) &+& \varepsilon(t),
\end{array}
\end{equation}
where
$
    \mathcal{L}(X^\mu)(t)
    :=
    \alpha(t)X^\mu(t)+\langle\beta_t,X^\mu\rangle
$
denotes the linear model operator.
Centering is, of course, without loss of information, since the intercept parameter can be recovered as
$
\alpha_0(t)
=
\E(Y(t))-\alpha(t)\E(X(t))-\langle\beta_t,\E(X)\rangle
$.

Let
$
\Gamma_X(x)(t)
=
\E(\langle X^\mu,x\rangle X^\mu(t))
=
\int_0^1 \sigma_X(t,s)x(s)\,ds
$
denote the covariance operator of $X$, where
$
    \sigma_X(t,s)=\E(X^\mu(t)X^\mu(s))
$
is the covariance kernel.
We write $\lambda_1\geq\lambda_2\geq\cdots>0$ for the positive eigenvalues of $\Gamma_X$ and $\{\psi_1,\psi_2,\ldots\}$ for corresponding orthonormal eigenfunctions spanning $\operatorname{Ker}(\Gamma_X)^\perp$.
This sequence is finite if $\Gamma_X$ has finite rank and infinite otherwise.
Throughout this section, measurability of deterministic functions on $[0,1]$ or $\Delta$ is understood with respect to the corresponding Borel $\sigma$-fields.
Our identification theory is developed under the following setup.

\begin{I-assumption}\
\begin{enumerate}[label=\textbf{I.\arabic*}]
    \item\label{I-ass:rvs}
    $X$ and $Y$ are jointly measurable stochastic processes on $[0,1]$ with sample paths in $L^2[0,1]$.
\item\label{I-ass:X-structure}
    The mean function $\E(X(t))$ is continuous on $[0,1]$, and the covariance kernel
    $\sigma_X(t,s)=\operatorname{Cov}(X(t),X(s))$ exists, is continuous on $[0,1]^2$, and satisfies $\sigma_X(t,t)>0$ for all $t\in[0,1]$.
    \item\label{I-ass:param}
    The parameter functions $\alpha_0,\alpha:[0,1]\to\mathbb R$ and $\beta:\Delta\to\mathbb R$ are measurable and satisfy $\int_0^1 \alpha_0(t)^2\,dt<\infty$, $\sup_{t\in[0,1]}|\alpha(t)|<\infty$, and $\int_\Delta \beta(t,s)^2\,ds\,dt<\infty$.
    Moreover, for every $t\in[0,1]$, $\beta_t\in\operatorname{Ker}(\Gamma_X)^\perp\subseteq L^2[0,1]$.
    \item\label{I-ass:eps-structure}
    The error process satisfies $\E(\varepsilon(t)\mid X)=0$ and $\E(\varepsilon(t)^2)<\infty$ for all $t\in[0,1]$.
\end{enumerate}
\end{I-assumption}

Assumption~\ref{I-ass:rvs} allows us to use $X$ and $Y$ simultaneously as stochastic processes with well-defined point evaluations and as $L^2[0,1]$-valued random elements.
More explicitly, joint measurability means that $(t,\omega)\mapsto X(t,\omega)$ and $(t,\omega)\mapsto Y(t,\omega)$ are measurable with respect to the product $\sigma$-field on $[0,1]\times\Omega$.
This distinction is important because point evaluations are not defined for generic elements of $L^2[0,1]$, which are equivalence classes of functions.
Joint measurability provides a standard way to connect the process representation with the Hilbert-space representation (see Section~7.4 of \citealt{Hsing_2015_Book}).
A convenient sufficient condition for Assumption~\ref{I-ass:rvs} is that $X$ and $Y$ have continuous sample paths (see Theorem~7.4.2 of \citealt{Hsing_2015_Book}).

Assumption~\ref{I-ass:X-structure} ensures that the covariance operator $\Gamma_X$ is well defined and that Mercer's theorem applies to the eigenvalue-eigenfunction expansion introduced above.
Moreover, the Karhunen-Loève expansion of $X$ converges in mean square uniformly in $t\in[0,1]$.

The restriction $\beta_t\in\operatorname{Ker}(\Gamma_X)^\perp$ in Assumption~\ref{I-ass:param} removes components of the historical coefficient that are not identifiable from the distribution of $X$.
Indeed, if $x\in\operatorname{Ker}(\Gamma_X)$, then
$
    \operatorname{Var}(\langle X,x\rangle)
    =
    \langle x,\Gamma_X(x)\rangle
    =
    0
$,
so the coordinate $\langle\beta_t,x\rangle$ has no effect on the regression.
This restriction is the functional analogue of eliminating linearly dependent columns in a finite-dimensional regression model with a rank-deficient design matrix \citep[see, for instance,][]{CardotMasSarda_2007}.

The measurability and integrability conditions in Assumptions~\ref{I-ass:rvs} and \ref{I-ass:param} ensure that the right-hand side of model~\eqref{eq:operator} is jointly measurable and has sample paths in $L^2[0,1]$.
Consequently, $\varepsilon$ is also jointly measurable with sample paths in $L^2[0,1]$.
If, in addition, $X$ and $Y$ are $C[0,1]$-valued, $\alpha_0,\alpha\in C[0,1]$, and $\beta\in C(\Delta)$, then the map
$
    t\mapsto \int_0^t \beta(t,s)X(s)\,ds
$
is continuous for every continuous sample path of $X$, and hence $\varepsilon$ is also $C[0,1]$-valued.

Finally, Assumption~\ref{I-ass:eps-structure} implies $\E(\varepsilon(t))=0$ and $\E(Y(t)^2)<\infty$ for all $t\in[0,1]$.
Moreover, the exogeneity condition $\E(\varepsilon(t)\mid X)=0$ yields
$
    \E(Y^\mu(t)\mid X)=\mathcal L(X^\mu)(t)
$,
which allows us to interpret $\mathcal L$ as a functional linear regression operator.

\begin{remark}[RKHS representation of $\mathcal L$]\label{rem:RKHSrep}
Under Assumptions~\ref{I-ass:rvs}--\ref{I-ass:eps-structure}, the functional linear regression operator $\mathcal L$ can be represented using the covariance RKHS of $X$.
Let $H$ denote the RKHS with reproducing kernel $\sigma_X(t,s)=\sum_k\lambda_k\psi_k(t)\psi_k(s)$.
Thus, $H=\{f\in L^2[0,1]:\sum_k\lambda_k^{-1}\langle f,\psi_k\rangle^2<\infty\}$ with inner product $\langle f,g\rangle_H=\sum_k\lambda_k^{-1}\langle f,\psi_k\rangle\langle g,\psi_k\rangle$ and induced norm $||f||_H^2=\langle f,f \rangle_H$.
For fixed $t\in[0,1]$, define $\omega_t=\sum_k\lambda_k(\alpha(t)\psi_k(t)+\langle\beta_t,\psi_k\rangle)\psi_k$.
Then $\omega_t\in H$, since $\|\omega_t\|_H^2\leq 2\alpha(t)^2\operatorname{Var}(X(t))+2\lambda_1\|\beta_t\|^2<\infty$, and we have
$\mathcal L(X^\mu)(t)=\langle\omega_t,X^\mu\rangle_H$.
\end{remark}

The RKHS representation in Remark~\ref{rem:RKHSrep} gives a compact joint representation of the concurrent and historical coefficients through a single RKHS-valued parameter.
For interpretation, however, such a joint representation is not sufficient.
The central question is whether the concurrent parameter $\alpha(t)$ and the historical parameter $\beta_t$ in
$
\mathcal{L}(X^\mu)(t)=\alpha(t)X^\mu(t)+\langle\beta_t,X^\mu\rangle
$
are separately identified for every $t\in[0,1]$.
Indeed, there may exist a coefficient function $\breve\beta_t\in\operatorname{Ker}(\Gamma_X)^\perp\subseteq L^2[0,1]$ such that
$
\alpha(t)X^\mu(t)=\langle\breve\beta_t,X^\mu\rangle
$.
In that case,
\[
\mathcal L(X^\mu)(t)
=
\alpha(t)X^\mu(t)+\langle\beta_t,X^\mu\rangle
=
\langle \breve\beta_t+\beta_t,X^\mu\rangle,
\]
and the concurrent effect can be absorbed into the historical component.
The following lemma characterizes exactly when such an ambiguous representation is possible.

\begin{lemma}\label{lem:identification}
Let Assumptions~\ref{I-ass:rvs}--\ref{I-ass:eps-structure} hold true.
For any $t\in[0,1]$ with $\alpha(t)\neq 0$, the following two statements are equivalent:
\begin{itemize}
    \item[(i)] There exists a function $\breve\beta_t\in\operatorname{Ker}(\Gamma_X)^\perp\subseteq L^2[0,1]$ such that
    $\alpha(t)X^\mu(t)=\langle\breve\beta_t,X^\mu\rangle$;
    \item[(ii)] $\sum_{k=1}^\infty \psi_k(t)^2<\infty$.
\end{itemize}
\end{lemma}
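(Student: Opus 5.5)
We will use the Karhunen–Loève expansion $X^\mu(t)=\sum_k \xi_k\psi_k(t)$ with scores $\xi_k=\langle X^\mu,\psi_k\rangle$. These scores are uncorrelated with $\operatorname{Var}(\xi_k)=\lambda_k$. By Assumption~\ref{I-ass:X-structure} and Mercer's theorem the series converges in mean square for each fixed $t$. We also have $\sigma_X(t,t)=\sum_k\lambda_k\psi_k(t)^2<\infty$. The equality in (i) is understood almost surely, i.e.\ as equality in $L^2(\Omega)$ of the two centered random variables. Because $\breve\beta_t\in\operatorname{Ker}(\Gamma_X)^\perp$, we can expand $\breve\beta_t=\sum_k b_k\psi_k$ with $\sum_k b_k^2<\infty$. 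This gives $\langle\breve\beta_t,X^\mu\rangle=\sum_k b_k\xi_k$, where the series converges in $L^2(\Omega)$. Note that $\langle X^\mu,x\rangle=0$ a.s.\ for $x\in\operatorname{Ker}(\Gamma_X)$, so no component is lost.

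\emph{(i)$\Rightarrow$(ii).} Suppose $\alpha(t)X^\mu(t)=\sum_k b_k\xi_k$ a.s. Take covariances of both sides with $\xi_j$. The left side gives $\alpha(t)\E(X^\mu(t)\xi_j)=\alpha(t)\lambda_j\psi_j(t)$; one can also read this off as $\alpha(t)\Gamma_X(\psi_j)(t)$, using the continuity of $\sigma_X$ to evaluate pointwise. The right side gives $b_j\lambda_j$. Since $\lambda_j>0$, we get $b_j=\alpha(t)\psi_j(t)$. Square-summability of the $b_j$ and $\alpha(t)\neq0$ then yield $\sum_j\psi_j(t)^2=\alpha(t)^{-2}\sum_j b_j^2<\infty$.

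\emph{(ii)$\Rightarrow$(i).} If $\sum_k\psi_k(t)^2<\infty$, define $\breve\beta_t:=\alpha(t)\sum_k\psi_k(t)\psi_k$. This series converges in $L^2[0,1]$ and lies in $\overline{\operatorname{span}}\{\psi_k\}=\operatorname{Ker}(\Gamma_X)^\perp$. Then $\langle\breve\beta_t,X^\mu\rangle=\alpha(t)\sum_k\psi_k(t)\xi_k$ a.s. It remains to identify this with $\alpha(t)X^\mu(t)$. We check that $\E\bigl(X^\mu(t)-\sum_{k\le K}\psi_k(t)\xi_k\bigr)^2=\sigma_X(t,t)-\sum_{k\le K}\lambda_k\psi_k(t)^2\to0$ by Mercer's theorem. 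Hence $X^\mu(t)=\sum_k\psi_k(t)\xi_k$ in $L^2(\Omega)$ and therefore a.s.

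The main technical obstacle is making sure that point evaluation $X^\mu(t)$ and the $L^2$-valued object are coupled correctly. Specifically, we need $\E(X^\mu(t)\xi_j)=\int\sigma_X(t,s)\psi_j(s)\,ds$, which requires Fubini via joint measurability (Assumption~\ref{I-ass:rvs}) and $\E\int X^\mu(s)^2ds<\infty$. We also need the mean-square KL convergence at the fixed point $t$, not merely in $L^2[0,1]$. Both follow from Assumptions~\ref{I-ass:rvs}--\ref{I-ass:X-structure} and Mercer's theorem, as noted after the assumptions. In the finite-rank case, (ii) holds trivially, and the same construction gives (i) with a finite sum.
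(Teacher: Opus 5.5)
Your proposal is correct and follows the paper's proof essentially step by step. Taking covariances with the scores forces $\langle\breve\beta_t,\psi_k\rangle=\alpha(t)\psi_k(t)$, Parseval then gives $\|\breve\beta_t\|^2=\alpha(t)^2\sum_k\psi_k(t)^2$, and the converse uses the same explicit $\breve\beta_t=\alpha(t)\sum_k\psi_k(t)\psi_k$ with the Karhunen--Lo\`eve expansion; your added justifications (Fubini for $\E(X^\mu(t)\xi_k)=\lambda_k\psi_k(t)$ and pointwise mean-square convergence of the expansion via Mercer) make explicit details the paper leaves implicit.
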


The condition $\alpha(t)\neq0$ in Lemma~\ref{lem:identification} is not restrictive for this characterization.
If $\alpha(t)=0$, the concurrent component is absent, and the ambiguity cannot arise.
Motivated by Lemma~\ref{lem:identification}, we impose the following condition to rule out such ambiguous representations.

\begin{I-assumption-cont}\
    \begin{enumerate}[label=\textbf{\textrm{I.5}}]
        \item\label{I-ass:eigenfct} $\sum_{j=1}^\infty \psi_j(t)^2=\infty$ for all $t\in[0,1]$.
    \end{enumerate}
\end{I-assumption-cont}

Assumption~\ref{I-ass:eigenfct} rules out finite-rank predictor processes, and this exclusion is natural in the present problem.
If $X$ has finite rank, then each point evaluation $X(t)$ is a finite linear combination of the principal component scores and can therefore be represented by an $L^2$ coefficient whenever the corresponding eigenfunctions are evaluated at $t$.
In that case, a concurrent point-evaluation effect can be absorbed into the historical component, and the separate decomposition in model~\eqref{eq:combined} is not identifiable.

For infinite-rank predictors, Assumption~\ref{I-ass:eigenfct} is often natural.
The following lemma makes this precise by showing that the condition is automatic on intervals where the eigenfunction expansion converges pointwise to smooth functions.

\begin{lemma}\label{lem:assident}
Let Assumptions~\ref{I-ass:rvs}--\ref{I-ass:eps-structure} hold true.
Assume additionally that $\operatorname{Ker}(\Gamma_X)=\{0\}$ and that, for some interval $\mathcal I\subseteq[0,1]$, $\lim_{k\to\infty}\sum_{j=1}^k \langle f,\psi_j\rangle\psi_j(t)=f(t)$ for every continuously differentiable function $f\in C^1[0,1]$ and every $t\in\mathcal I$.
Then $\sum_{j=1}^\infty\psi_j(t)^2=\infty$ for every $t\in\mathcal I$.
\end{lemma}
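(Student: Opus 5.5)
We argue by contradiction. Fix $t\in\mathcal I$ and suppose $S(t):=\sum_{j=1}^\infty\psi_j(t)^2<\infty$. The idea is that the partial sums $\sum_{j\le k}\langle f,\psi_j\rangle\psi_j(t)$ then define a bounded linear functional on all of $L^2[0,1]$. Such a functional is an $L^2$ inner product, so it cannot equal point evaluation on the $L^2$-dense set $C^1[0,1]$.

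\emph{Step 1 (the limit functional).} Set $g_t:=\sum_{j=1}^\infty\psi_j(t)\psi_j$. Since the coefficients are square-summable and the $\psi_j$ are orthonormal, $g_t\in L^2[0,1]$ with $\|g_t\|^2=S(t)$. For any $f\in L^2[0,1]$, Parseval gives
\[
\sum_{j=1}^k\langle f,\psi_j\rangle\psi_j(t)=\Bigl\langle f,\sum_{j=1}^k\psi_j(t)\psi_j\Bigr\rangle\longrightarrow\langle f,g_t\rangle .
\]
The partial sums converge to $g_t$ in $L^2$, and the inner product is continuous. By hypothesis, this yields $f(t)=\langle f,g_t\rangle$ for every $f\in C^1[0,1]$.

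\emph{Step 2 (contradiction via concentrating bumps).} Choose $f_n\in C^1[0,1]$ with $0\le f_n\le 1$, $f_n(t)=1$, and support in an interval of length at most $1/n$ around $t$; smooth bumps work, truncated at the boundary if $t\in\{0,1\}$. Then $\|f_n\|\le n^{-1/2}\to0$, so by Cauchy--Schwarz $|\langle f_n,g_t\rangle|\le\|f_n\|\,\|g_t\|\to0$. This contradicts $\langle f_n,g_t\rangle=f_n(t)=1$. Hence $S(t)=\infty$.

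\emph{Remarks on the hypotheses.} The assumption $\operatorname{Ker}(\Gamma_X)=\{0\}$ is not strictly needed in this argument; the pointwise convergence hypothesis already forces the system $\{\psi_j\}$ to reproduce $C^1$ functions. It does, however, explain why that hypothesis is reasonable, since then $\{\psi_j\}$ is a complete orthonormal basis. The only step needing care is Step 1, namely justifying the interchange of the limit with the inner product. This is exactly where square-summability is used, and it is what distinguishes a bounded functional from point evaluation. Everything else is routine.
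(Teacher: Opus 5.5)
Your proof is correct and is essentially the paper's argument: contradiction via $C^1$ bumps concentrating at $t$, combined with Cauchy--Schwarz against the square-summable sequence $(\psi_j(t))_j$. The only difference is packaging, and yours is slightly cleaner. You sum the series into the representer $g_t=\sum_j\psi_j(t)\psi_j$ and use bumps with $f_n(t)=1$ and $\|f_n\|\to0$. The paper instead normalizes $\|f_{\theta,t}\|=1$ with $f_{\theta,t}(t)\ge1$, and splits the expansion into a finite head that vanishes as $\theta\to0$ and a tail bounded by $1/2$.
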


\noindent
We can now state the main identification result.

\begin{theorem}[Identification]\label{th:identification}
Let Assumptions~\ref{I-ass:rvs}--\ref{I-ass:eigenfct} hold true and let $t\in[0,1]$.
Then, for any $\breve\alpha(t)\in\mathbb R$ and any
$
\breve\beta_t
=
\{\breve\beta(t,s):s\in[0,1],\;\breve\beta(t,s)=0\;\text{for all }s>t\}
\in\operatorname{Ker}(\Gamma_X)^\perp
$,
we have
\[
\E\Big(
(
\mathcal L(X^\mu)(t)
-
\breve\alpha(t)X^\mu(t)
-
\langle\breve\beta_t,X^\mu\rangle
)^2
\Big)>0
\]
whenever $\breve\alpha(t)\neq\alpha(t)$ and/or $\|\breve\beta_t-\beta_t\|>0$.
\end{theorem}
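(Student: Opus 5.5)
Write $a:=\alpha(t)-\breve\alpha(t)$ and $b:=\beta_t-\breve\beta_t$. Since both $\beta_t$ and $\breve\beta_t$ lie in $\operatorname{Ker}(\Gamma_X)^\perp$, so does $b$. By linearity,
\[
\mathcal L(X^\mu)(t)-\breve\alpha(t)X^\mu(t)-\langle\breve\beta_t,X^\mu\rangle
= aX^\mu(t)+\langle b,X^\mu\rangle ,
\]
so the plan is to prove the contrapositive: if $\E\big((aX^\mu(t)+\langle b,X^\mu\rangle)^2\big)=0$, then $a=0$ and $\|b\|=0$. Assumptions~\ref{I-ass:rvs}--\ref{I-ass:param} make this second moment finite. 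We split into the cases $a=0$ and $a\neq 0$.

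\emph{Case $a=0$.} Then the expectation equals $\E(\langle b,X^\mu\rangle^2)=\langle b,\Gamma_X(b)\rangle=\sum_k\lambda_k\langle b,\psi_k\rangle^2$. If this vanishes, then $\langle b,\psi_k\rangle=0$ for every $k$, because all $\lambda_k>0$. Since $b\in\operatorname{Ker}(\Gamma_X)^\perp=\overline{\operatorname{span}}\{\psi_k\}$, it follows that $b=0$ in $L^2$, i.e.\ $\|\breve\beta_t-\beta_t\|=0$. Hence whenever $\|\breve\beta_t-\beta_t\|>0$ and $\breve\alpha(t)=\alpha(t)$, the expectation is strictly positive.

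\emph{Case $a\neq0$.} Suppose the expectation vanishes. Then $aX^\mu(t)=\langle -b,X^\mu\rangle$ almost surely, so $\tilde\beta_t:=-b/a\in\operatorname{Ker}(\Gamma_X)^\perp$ satisfies $X^\mu(t)=\langle\tilde\beta_t,X^\mu\rangle$ a.s. Multiplying by $\alpha(t)$ reduces this to statement (i) of Lemma~\ref{lem:identification}, but only if $\alpha(t)\neq0$. To avoid that restriction, I would instead reprove the relevant direction of the lemma with coefficient $1$. Multiply the identity by $\langle X^\mu,\psi_k\rangle$ and take expectations. By Mercer and Fubini, $\E(X^\mu(t)\langle X^\mu,\psi_k\rangle)=\lambda_k\psi_k(t)$, and the right-hand side gives $\langle\tilde\beta_t,\Gamma_X\psi_k\rangle=\lambda_k\langle\tilde\beta_t,\psi_k\rangle$. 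Therefore $\psi_k(t)=\langle\tilde\beta_t,\psi_k\rangle$ for every $k$, and Bessel's inequality yields
\[
\sum_k\psi_k(t)^2\le\|\tilde\beta_t\|^2<\infty .
\]
This contradicts Assumption~\ref{I-ass:eigenfct}. Hence the expectation is positive whenever $\breve\alpha(t)\neq\alpha(t)$, regardless of $b$.

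The main technical obstacle is justifying $\E(X^\mu(t)\langle X^\mu,\psi_k\rangle)=\int_0^1\sigma_X(t,s)\psi_k(s)\,ds=\lambda_k\psi_k(t)$ pointwise in $t$, not merely almost everywhere. This requires Fubini, which is licensed by joint measurability (Assumption~\ref{I-ass:rvs}) together with the bound $\E\int_0^1|X^\mu(t)X^\mu(s)\psi_k(s)|\,ds<\infty$ via Cauchy--Schwarz and the continuity of $\sigma_X$. It also requires that the eigenfunctions are the continuous Mercer versions, so that $\psi_k(t)=\lambda_k^{-1}\int\sigma_X(t,s)\psi_k(s)\,ds$ holds at every $t$. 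Once this is in place, the remaining steps are routine Hilbert-space arguments.
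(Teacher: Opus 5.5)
Your proof is correct and follows the paper's argument. You handle the case $\breve\alpha(t)=\alpha(t)$ via $\langle b,\Gamma_X(b)\rangle=\sum_k\lambda_k\langle b,\psi_k\rangle^2>0$, and the case $\breve\alpha(t)\neq\alpha(t)$ by rerunning the (i)$\Rightarrow$(ii) direction of Lemma~\ref{lem:identification} with a rescaled coefficient, which is exactly the paper's ``invariance under nonzero rescaling'' step. The only difference is cosmetic: the paper splits off the subcase $\|\breve\beta_t-\beta_t\|=0$, $\breve\alpha(t)\neq\alpha(t)$ and settles it via $(\alpha(t)-\breve\alpha(t))^2\sigma_X(t,t)>0$, whereas your contradiction with Assumption~\ref{I-ass:eigenfct} already covers it, since there $\tilde\beta_t=0$ forces $\psi_k(t)=0$ for all $k$.
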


Theorem~\ref{th:identification} shows that $\alpha(t)$ and $\beta_t$ are separately identified as the unique population minimizers under squared loss.
This motivates a least-squares estimation strategy.
The identification, however, is nontrivial and leads to a nonstandard estimation problem, which we address in Section~\ref{sec:estimation}.

\begin{remark}
Under additional continuity assumptions on $\alpha$ and $\beta$, the Supplementary Material provides a uniform version of Theorem~\ref{th:identification}.
In particular, it gives conditions under which
$
\inf_{t\in[0,1]} \E(
(
\mathcal L(X^\mu)(t)
-
\breve\alpha(t)X^\mu(t)
-
\langle\breve\beta_t,X^\mu\rangle
)^2
)>0
$
whenever $\breve\alpha(t)\neq\alpha(t)$ and/or $\|\breve\beta_t-\beta_t\|>0$ uniformly over $t$.
While this uniform version is not needed for the estimation theory below, it may be useful in other settings.
\end{remark}

To construct estimators for the separately identified components, it is useful to derive separate representations of $\alpha(t)$ and $\beta_t$.
For this purpose, we decompose the predictor function $X^\mu=\{X^\mu(s):s\in[0,1]\}$ into the concurrent component $X^\mu(t)$ and its orthogonal residual component.
For a fixed $t\in[0,1]$, let
\[
\delta_t=\{\delta_t(s):s\in[0,1]\}
\]
denote the residual from the population regression of $X^\mu(s)$ on $X^\mu(t)$:
\begin{align}\label{eq:orthogonalize}
     X^\mu(s)=v_t(s)X^\mu(t)+\delta_t(s),
     \qquad s\in[0,1],
\end{align}
where
\[
v_t(s)
:=
\frac{\sigma_X(t,s)}{\sigma_X(t,t)}
=
\frac{\E(X^\mu(t)X^\mu(s))}{\E(X^\mu(t)^2)} .
\]
By construction, $v_t(t)=1$, $\delta_t(t)=0$, and $\E(X^\mu(t)\delta_t(s))=0$ for all $s\in[0,1]$.

Substituting \eqref{eq:orthogonalize} into \eqref{eq:operator} yields the orthogonalized representation
\begin{equation}\label{eq:orthogonalization}
\begin{array}{rcccccl}
Y^\mu(t) &=& \alpha(t)X^\mu(t) &+& \langle\beta_t,X^\mu\rangle &+& \varepsilon(t)\\
         &=& \alpha(t)X^\mu(t)+\langle\beta_t,v_t\rangle X^\mu(t) &+& \langle\beta_t,\delta_t\rangle &+& \varepsilon(t)\\
         &=& \alpha^\star(t)X^\mu(t) &+& \langle\beta_t,\delta_t\rangle &+& \varepsilon(t),
\end{array}
\end{equation}
where
\begin{align}\label{eq:alphastar}
    \alpha^\star(t)
    :=
    \alpha(t)+\langle\beta_t,v_t\rangle
    =
    \frac{\sigma_{XY}(t,t)}{\sigma_X(t,t)}
\end{align}
and $\sigma_{XY}(t,s):=\E(X^\mu(t)Y^\mu(s))$ denotes the cross-covariance kernel of $X$ and $Y$.

\begin{remark}
Equation~\eqref{eq:alphastar} shows that the coefficient estimated in the purely concurrent model~\eqref{eq:conc} is generally $\alpha^\star(t)$ rather than $\alpha(t)$.
Thus, when the historical component is nonzero, the concurrent-only coefficient combines the direct concurrent effect with the part of the historical effect that is linearly associated with $X(t)$.
\end{remark}

For any fixed $t\in[0,1]$, the historical coefficient function $\beta_t$ can be represented as the slope function in the scalar-on-function regression
\begin{align}\label{eq:betat}
    Y^\mu(t)=\langle\beta_t,\delta_t\rangle+\tilde\varepsilon(t),
\end{align}
where $\tilde\varepsilon(t):=\alpha^\star(t)X^\mu(t)+\varepsilon(t)$.
Indeed, Assumption~\ref{I-ass:eps-structure} gives $\E(\varepsilon(t)|X^\mu)=0$, while the construction of $\delta_t$ gives $\E(X^\mu(t)\delta_t(s))=0$ for all $s\in[0,1]$.
Consequently,
\[
\E(\delta_t(s)\tilde\varepsilon(t))=0
\qquad\text{for all }s\in[0,1],
\]
so the slope in \eqref{eq:betat} is the historical coefficient function of model~\eqref{eq:combined}.
The concurrent coefficient can then be written as
\begin{align}\label{eq:alpha}
\alpha(t)
=
\alpha^\star(t)-\langle\beta_t,v_t\rangle
=
\frac{\sigma_{XY}(t,t)}{\sigma_X(t,t)}-\langle\beta_t,v_t\rangle .
\end{align}

We finally derive some basic properties of the orthogonalized process.
A direct calculation shows that the covariance function
$\sigma_{\delta,t}(u,w)=\E(\delta_t(u)\delta_t(w))$ is
\[
\sigma_{\delta,t}(u,w)
=
\sigma_X(u,w)-v_t(u)\sigma_X(t,t)v_t(w),
\qquad u,w\in[0,1].
\]
In particular, since $\delta_t(t)=0$, we have $\sigma_{\delta,t}(t,w)=\sigma_{\delta,t}(u,t)=0$ for all $u,w\in[0,1]$.
Let $\Gamma_{\delta,t}$ denote the covariance operator corresponding to $\sigma_{\delta,t}$.
Since $v_t(u)\sigma_X(t,t)v_t(w)$ defines a positive semidefinite kernel, it follows that, for all $b\in L^2[0,1]$,
\[
\langle b,\Gamma_{\delta,t}(b)\rangle
=
\langle b,\Gamma_X(b)\rangle
-
\sigma_X(t,t)\langle b,v_t\rangle^2
\leq
\langle b,\Gamma_X(b)\rangle .
\]
The eigenvalues
$\lambda_{\delta,t,1}\geq\lambda_{\delta,t,2}\geq\cdots$ of $\Gamma_{\delta,t}$ and
$\lambda_1\geq\lambda_2\geq\cdots$ of $\Gamma_X$ therefore satisfy
\begin{align}\label{eq:eigenvaluerelation}
    \lambda_{\delta,t,j}\leq\lambda_j
    \qquad\text{for all }j\in\mathbb N.
\end{align}
This relation will be useful in the estimation theory.

\section{Estimation} \label{sec:estimation}

This section proposes estimators for the concurrent coefficient $\alpha(t)$ and the historical coefficient $\beta_t$, as defined in Section~\ref{sec:identification}, at fixed time points $t\in(0,1]$. Let $(X_1,Y_1),\ldots,(X_n,Y_n)$ denote $n$ i.i.d.\ copies of $(X,Y)$.

Our estimation strategy is based on the orthogonalized representation derived in Section~\ref{sec:identification}.
For a fixed time point $t$, the historical coefficient $\beta_t$ is not estimated from a regression on the original predictor trajectory $X$, but from the scalar-on-function regression of $Y^\mu(t)$ on the residualized process $\delta_t$ in \eqref{eq:betat}.
The key difference from a standard functional linear regression problem is that the relevant predictor $\delta_t$ depends on the unknown covariance structure of $X$ and must therefore be estimated from the data.
Once a feasible version of this orthogonalization has been constructed, we estimate $\beta_t$ using the smoothing-spline regularization of \citet{crambes2009}.
The concurrent coefficient is then recovered from the identity $\alpha(t)=\alpha^\star(t)-\langle\beta_t,v_t\rangle$ in \eqref{eq:alpha}.

In practice, the functions $X_i(s)$ and $Y_i(s)$, $s\in[0,1]$, are often observed at $p$ equidistant grid points
$0=s_1<\cdots<s_p=1$, with $s_j=(j-1)/(p-1)$ for $j=1,\ldots,p$.
For densely observed functional data, such a representation may be obtained after preprocessing or presmoothing. If $X_i$ and $Y_i$ are fully observed, $p$ may be chosen arbitrarily large. For simplicity, we assume that the concurrent time point $t$ of interest is one of the grid points, that is, $t\in\{s_1,\ldots,s_p\}$. For this fixed $t$, the corresponding historical grid points are denoted by
$s_1<\cdots<s_{p_t}=t$, where $p_t\le p$.

To introduce the feasible estimator, let
\begin{align}\label{eq:demeanedcurves}
	 X_{i}^{\bar \mu}(t) := X_i(t) - \bar{X}(t)\quad\text{and}\quad  Y_{i}^{\bar \mu}(t) := Y_i(t) - \bar{Y}(t),
\end{align}
with $\bar{X}(t) = n^{-1}\sum_{i=1}^n X_i(t)$ and $\bar{Y}(t) = n^{-1}\sum_{i=1}^n Y_i(t)$, denote the demeaned sample curves, and let
\begin{align*}
	\widehat \sigma_X(t,s) := \frac{1}{n} \sum_{i=1}^n X_{i}^{\bar \mu}(t)  X_{i}^{\bar \mu}(s)\quad\text{and}\quad
	\widehat \sigma_{XY}(t,s) := \frac{1}{n} \sum_{i=1}^n X_{i}^{\bar \mu}(t) Y_{i}^{\bar \mu}(s)
\end{align*}
denote the sample covariance kernels. The sample version of $\delta_t(s)$ is then given by
\begin{align} \label{eq:sampledelta}
	\widehat \delta_{i,t}(s) :=  X_i^{\bar \mu}(s) - \widehat v_t(s)  X_i^{\bar \mu}(t), \quad  \widehat v_t(s):= \frac{\widehat \sigma_X(t,s)}{\widehat \sigma_X(t,t)},\quad i=1, \ldots, n,
\end{align}
and the estimator of $\alpha^\star(t)$ by
\begin{align*}
	\widehat \alpha^\star(t) := \frac{\widehat \sigma_{XY}(t,t)}{\widehat \sigma_X(t,t)},\quad t\in[0,1].
\end{align*}

Given the empirically residualized predictors $\widehat\delta_{i,t}$, we estimate the nonzero part of the historical coefficient function $\beta_t$ by a smoothing-spline regularized least-squares criterion, which is determined by minimizing
\begin{equation}\label{splinedef}
\frac{1}{n}\sum_{i=1}^n \bigg( Y_i^{\bar \mu}(t) - \frac{1}{p}\sum_{j=1}^{p_t} b(s_j) \widehat{\delta}_{i,t}(s_j)\bigg)^2
+\rho\bigg(\frac{1}{p_t}\sum_{j=1}^{p_t} \pi_b^2(s_j) +\int_{0}^{t}  \Big(\frac{\partial^m b(s)}{\partial s^m}\Big)^2 \dd s\bigg)
\end{equation}
over all functions $b$ in the Sobolev space $W^{m,2}[0,t]\subset L^2[0,t]$, where $\rho>0$ denotes the smoothing parameter. Here, $\pi_b(s)=\sum_{l=1}^m\gamma_{b,l}s^{l-1}$ denotes the best possible approximation of $(b(s_1),\dots,b(s_{p_t}))$ by a polynomial of degree $m-1$, i.e., $\sum_{j=1}^{p_t} (b(s_j)-\pi_b(s_j))^2=\min_{\gamma_1,\dots,\gamma_m}\sum_{j=1}^{p_t} (b(s_j)-\sum_{l=1}^m\gamma_{l}s^{l-1})^2$.
The nonstandard term $p_t^{-1}\sum_{j=1}^{p_t} \pi_b^2(s_j)$ in the roughness penalty of \eqref{splinedef} guarantees the existence of a unique solution without any additional assumptions on the predictor functions $\widehat{\delta}_{i,t}$.

Following \cite{crambes2009}, the unique solution to the minimization problem in \eqref{splinedef} is an element of the space of natural splines of order $2m$ with knots at $s_{1},\dots, s_{p_t}$, with basis $\bm g_t(s) = (g_1(s), \ldots, g_{p_t}(s))^\top$ (see Appendix 5.8 of \citealt{Book_Eubank_1999} for a discussion of possible basis functions). Let
 \begin{align*}
	 \bm G_t = \begin{pmatrix}
		g_1(s_1) & \ldots & g_{p_t}(s_1) \\
		\vdots & & \vdots \\
		g_1(s_{p_t}) & \ldots & g_{p_t}(s_{p_t})
	\end{pmatrix}
	\quad\text{and}\quad
	\bm D_t = \begin{pmatrix}
		\widehat \delta_{1,t}(s_1) & \ldots & \widehat \delta_{1,t}(s_{p_t}) \\
		\vdots & & \vdots \\
		\widehat \delta_{n,t}(s_1) & \ldots & \widehat \delta_{n,t}(s_{p_t})
	\end{pmatrix}
\end{align*}
denote the $(p_t\times p_t)$ matrix of discretized basis functions and the $(n \times p_t)$ matrix of discretized predictor functions, and let $\bm Y_t = (Y_1^{\bar \mu}(t), \ldots, Y_n^{\bar \mu}(t))^\top$ denote the vector of centered response variables. Moreover, consider the $(p_t \times p_t)$ matrix $\bm A_{m,t} := \bm P_{m,t} + p_t \bm A_{m,t}^*$, where $\bm P_{m,t}$ denotes the matrix projecting into the space of all (discretized) polynomials of
degree $m-1$, i.e.
$\bm P_{m,t} = \bm W_{m,t} (\bm W_{m,t}^\top\bm W_{m,t})^{-1} \bm W_{m,t}^\top$ with
\begin{align*}
	 \bm W_{m,t} &= \begin{pmatrix}
	1 & s_1 & \ldots & s_1^{m-1} \\
	\vdots & \vdots & & \vdots \\
	1 & s_{p_t} & \ldots & s_{p_t}^{m-1}
\end{pmatrix},
\end{align*}
and where $\bm A_{m,t}^*$ denotes the regularization matrix
\begin{align*}
	\bm A_{m,t}^* := \bm G_t (\bm G_t^\top \bm G_t)^{-1} \left(\int_0^{t} \left( \frac{\partial^m \bm g_t(s)}{\partial s^m} \right) \left( \frac{\partial^m \bm g_t(s)}{\partial s^m} \right)^\top \dd s\right) (\bm G_t^\top \bm G_t)^{-1} \bm G_t^\top.
\end{align*}
Finally, note that for any vector $\bm{w}_t = (w(s_1),\dots,w(s_{p_t}))^\top\in\mathbb{R}^{p_t}$, there exists a unique natural spline interpolant $s_{\bm w_t}(s)=\bm g_t(s)^\top(\bm G_t^\top \bm G_t)^{-1}\bm G_t^\top\bm w_t$ that interpolates $\bm w_t$ at the grid points $s_1,\dots,s_{p_t}$.

Minimizing the objective function in \eqref{splinedef} over all functions $b$ in the Sobolev space $W^{m,2}[0,t]$ is then equivalent to the following minimization problem
\begin{align*}
\widehat{\mathbf b}_t=\argmin_{\mathbf{b} \in \mathbb R^{p_t}} \bigg\lbrace \frac{1}{n} \Big(\bm Y_t - \frac{1}{p} \bm D_t \mathbf{b} \Big)^\top\Big(\bm Y_t - \frac{1}{p} \bm D_t \mathbf{b}\Big) + \frac{\rho}{p_t} \mathbf{b}^\top \bm A_{m,t} \mathbf{b} \bigg\rbrace,
\end{align*}
which can be solved if there are at least $m$ distinct grid points in $[0,t]$. Then, \begin{align}\label{eq:betahat}
\widehat{\mathbf b}_t  = \frac{1}{n} \bigg( \frac{1}{np} \bm D_t^\top \bm D_t + \rho \frac{p}{p_t} \bm A_{m,t} \bigg)^{-1} \bm D_t^\top \bm Y_t.
\end{align}
The solution to the minimization problem \eqref{splinedef} is then given by $s_{\widehat{\mathbf b}_t}=\{s_{\widehat{\mathbf b}_t}(s):s\in[0,t]\}$ which allows us to define the smoothing spline estimator for $\beta_t$ as
$$
\widehat \beta_t(s) := \begin{cases}
	s_{\widehat{\mathbf{b}}_t}(s)& \text{if} \ 0 \leq s \leq t \ \text{and}\ t>(m-1)/(p-1) \\
		0 & \text{otherwise}.
	\end{cases}
$$
To select the smoothing parameter $\rho>0$ in practice, the generalized cross-validation (GCV) estimator proposed by \cite{crambes2009} can be used.

To estimate the concurrent parameter function $\alpha(t)$ we use the plug-in estimator
$$
\widehat \alpha(t) = \widehat \alpha^{\star}(t)-\langle \widehat \beta_t,  \widehat v_t\rangle = \frac{\widehat \sigma_{XY}(t,t)}{\widehat \sigma_X(t,t)} - \int_0^t \widehat \beta_t(s) \frac{\widehat \sigma_X(t,s)}{\widehat \sigma_X(t,t)} \dd s,
$$
and an estimator for the intercept function $\alpha_0(t)=E(Y_i(t))-\alpha(t)E(X_i(t))- \langle \beta_t, E(X_i)\rangle$ is given by
$$
\widehat\alpha_0(t)=\bar Y(t) -\widehat\alpha(t)\bar X(t)-\langle \widehat \beta_t, \bar X \rangle.
$$

Note that $\beta_t$ is identified in the population regression \eqref{eq:betat} based on the infeasible residual process $\delta_t$ while our implementation replaces $v_t$ and $\delta_{t}$ by their empirical counterparts $\widehat v_t$ and $\widehat \delta_{i,t}$ given in \eqref{eq:sampledelta}.
This substitution enforces the sample analogue of the orthogonality property $\E(X^\mu(t)\delta_t(s))=0$. Indeed, for every $s\in[0,1]$,
$$
\frac{1}{n}\sum_{i=1}^n X_i^{\bar\mu}(t)\,\widehat\delta_{i,t}(s)
=\widehat\sigma_X(t,s)-\widehat v_t(s)\widehat\sigma_X(t,t)=0,
$$
and hence also $\frac{1}{n}\sum_{i=1}^n X_i^{\bar\mu}(t)\,\widehat\delta_{i,t}(s_j)=0$ for all grid points $s_j\le t$.
Consequently, the empirical version of the predictor decomposition \eqref{eq:orthogonalize} is
$$
X_i^{\bar\mu}(s)=\widehat v_t(s)\,X_i^{\bar\mu}(t)+\widehat\delta_{i,t}(s),
$$
and the model for the centered responses can be written as
\begin{align*}
Y_i^{\bar \mu}(t)
&= \alpha(t) X_i^{\bar \mu}(t) + \langle \beta_t, X_i^{\bar \mu} \rangle + \varepsilon_i^{\bar \mu}(t)\\
&=\widetilde \alpha^\star(t) X_i^{\bar \mu}(t)
+\langle \beta_t,\widehat \delta_{i,t} \rangle + \varepsilon_i^{\bar \mu}(t),
\end{align*}
where $\varepsilon_i^{\bar \mu}(t):=\varepsilon_i(t)-n^{-1}\sum_{j=1}^n\varepsilon_j(t)$ and $\widetilde \alpha^\star(t):=\alpha(t)+\langle \beta_t,\widehat v_t\rangle$.
Define the infeasible partialled-out response
$$
\widetilde Y_i^{\bar\mu}(t):= Y_i^{\bar\mu}(t)-\widetilde\alpha^\star(t)X_i^{\bar\mu}(t)
= \langle \beta_t,\widehat\delta_{i,t}\rangle+\varepsilon_i^{\bar\mu}(t),
$$
so that the relevant infeasible scalar-on-function regression is
\begin{align}\label{eq:infeasiblemodel}
\widetilde Y_i^{\bar\mu}(t) = \langle \beta_t,\widehat\delta_{i,t}\rangle+\varepsilon_i^{\bar\mu}(t).
\end{align}
Importantly, using the observable response $Y_i^{\bar\mu}(t)$ in \eqref{splinedef} yields exactly the same smoothing-spline estimator as using the unobservable $\widetilde Y_i^{\bar\mu}(t)$.
To see this, observe that the first term of the objective function \eqref{splinedef} satisfies
\begin{align*}
&\frac{1}{n}\sum_{i=1}^n \bigg( Y_i^{\bar \mu}(t) - \frac{1}{p}\sum_{j=1}^{p_t} b(s_j)\widehat \delta_{i,t}(s_j))\bigg)^2 -
 \frac{1}{n}\sum_{i=1}^n \bigg( \widetilde Y_i^{\bar \mu}(t) - \frac{1}{p}\sum_{j=1}^{p_t} b(s_j)\widehat \delta_{i,t}(s_j)) \bigg)^2 \\
 &=\frac{1}{n}\sum_{i=1}^n  (\widetilde \alpha^\star(t) X_i^{\bar \mu}(t))^2 +
  \frac{2}{n} \sum_{i=1}^n \widetilde{\alpha}^\star(t) X_i^{\bar \mu}(t) \bigg( \widetilde Y_i^{\bar \mu}(t) - \frac{1}{p}\sum_{j=1}^{p_t} b(s_j)\widehat \delta_{i,t}(s_j)) \bigg)  \\
 &=\frac{1}{n}\sum_{i=1}^n  (\widetilde \alpha^\star(t) X_i^{\bar \mu}(t))^2 +
  \frac{2}{n} \sum_{i=1}^n \widetilde{\alpha}^\star(t) X_i^{\bar \mu}(t) \varepsilon_i^{\bar \mu}(t)
\end{align*}
since $\sum_{i=1}^n X_i^{\bar \mu}(t) \widehat \delta_{i,t}(s)=0$ for all $0\leq s\leq 1$.
Note that the two terms on the right-hand side of this equation do not depend on $b(\cdot)$.
Therefore, minimizing \eqref{splinedef} with response $Y_i^{\bar\mu}(t)$ is equivalent to minimizing the same criterion with response $\widetilde Y_i^{\bar\mu}(t)$, and with
$\widetilde{\bm Y}_t=(\widetilde{Y}_{1}^{\bar \mu}(t),\dots,\widetilde{Y}_{n}^{\bar \mu}(t))^\top$ we have the following useful equivalent expressions for our estimator $\widehat{\mathbf{b}}_t$ in \eqref{eq:betahat}:
$$
	\widehat{\mathbf{b}}_t  = \frac{1}{n} \bigg( \frac{1}{np} \bm D_t^\top \bm D_t + \rho \frac{p}{p_t} \bm A_{m,t} \bigg)^{-1} \bm D_t^\top \bm Y_t=\frac{1}{n} \bigg( \frac{1}{np} \bm D_t^\top \bm D_t + \rho \frac{p}{p_t} \bm A_{m,t} \bigg)^{-1} \bm D_t^\top  \widetilde{ \bm Y}_t.
$$
Thus, $\widehat\beta_t$ can be viewed as the smoothing-spline estimator in the infeasible regression \eqref{eq:infeasiblemodel}.

\section{Asymptotic Results} \label{sec:consistency}

Our theoretical results are established under Hölder-type path regularity assumptions on the sample paths of the predictor variable $X$.
For any $d\in \mathbb N_0$ and $\kappa\in(0,1]$, a function $f\colon[0,1]\to\mathbb R$ is called $(d,\kappa)$-Hölder continuous if its $\ell$-th derivative $f^{(\ell)}$ exists and is continuous for $\ell = 0, \ldots, d$, and if there exists $C < \infty$ such that its $d$-th derivative satisfies
$$
	|f^{(d)}(t)-f^{(d)}(s)| \leq C |t-s|^\kappa \quad \text{for all} \ t,s \in [0,1].
$$
The $(d,\kappa)$-Hölder norm is defined by
$$
	\|f\|_{d,\kappa} := \max_{0\leq \ell \leq d} \|f^{(\ell)}\|_\infty + [f^{(d)}]_\kappa,
$$
where
$$
	\|f^{(\ell)}\|_\infty := \sup_{t \in [0,1]} |f^{(\ell)}(t)|, \qquad [f^{(d)}]_\kappa := \sup_{s \neq t} \frac{|f^{(d)}(t)-f^{(d)}(s)|}{|t-s|^{\kappa}}.
$$

Hölder regularity is used repeatedly in the asymptotic analysis. In particular, it implies bounds on the eigenvalue decay of covariance operators. The following lemma describes this connection and also gives the corresponding bound for the covariance operator of the orthogonalized predictor.

\begin{lemma}\label{lem:eigenvaluedecay}
If $\E(\|X\|_{d,\kappa}^2) < \infty$, then the eigenvalues $(\lambda_k)_{k\geq1}$ of $\Gamma_X$ satisfy
$$
\lambda_k = O\big(k^{-(2d+\kappa+1)}\big),
\qquad \text{as} \ k\to\infty.
$$
Moreover, for each fixed $t\in[0,1]$, the eigenvalues $(\lambda_{\delta,t,k})_{k\geq1}$ of $\Gamma_{\delta,t}$ satisfy
$$
\lambda_{\delta,t,k}
=
O\big(k^{-(2d+\kappa+1)}\big),
\qquad \text{as} \ k\to\infty.
$$
\end{lemma}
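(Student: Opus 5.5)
The plan is to bound the eigenvalues through the Courant–Fischer (min–max) characterization. For every $k$,
\[
\lambda_{k+1}=\min_{\dim V\le k}\,\max_{b\perp V,\|b\|=1}\langle b,\Gamma_X b\rangle ,
\]
so it suffices to find a $k$-dimensional subspace $V_k\subset L^2[0,1]$ with $\E\|X^\mu-\Pi_{V_k}X^\mu\|^2$ small. The reason is that the quantity $\sum_{j>k}\lambda_j=\min_{\dim V=k}\E\|X^\mu-\Pi_V X^\mu\|^2$ controls all tail eigenvalues. Since the eigenvalues are nonincreasing, we have $\lambda_{2k}\le k^{-1}\sum_{j=k+1}^{2k}\lambda_j\le k^{-1}\E\|X^\mu-\Pi_{V_k}X^\mu\|^2$. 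So the target is an approximation bound of order $k^{-(2d+2\kappa)}$ for $\E\|X^\mu-\Pi_{V_k}X^\mu\|^2$, which already gives $\lambda_{2k}=O(k^{-(2d+2\kappa+1)})$, better than claimed. If only the weaker exponent is needed, an $L^2$-approximation error of order $k^{-(2d+\kappa)}$ after squaring with the appropriate Hölder exponent suffices; I will aim for the natural Jackson-type bound and adjust.

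\emph{Construction of $V_k$.} Take $V_k$ to be piecewise polynomials of degree $\le d$ on a uniform partition of $[0,1]$ into $N\asymp k/(d+1)$ intervals, so that $\dim V_k\le k$. On each interval of length $h=1/N$, Taylor expansion of order $d$ with Hölder remainder gives $\sup|f-\text{Taylor}_d f|\le [f^{(d)}]_\kappa h^{d+\kappa}/d!$. Hence, for every path,
\[
\|X-\Pi_{V_k}X\|^2\le C\|X\|_{d,\kappa}^2\,k^{-2(d+\kappa)} .
\]
The same holds for $X^\mu$, because $\E X$ inherits the regularity from $\E\|X\|_{d,\kappa}^2<\infty$ by Jensen's inequality. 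Taking expectations and using $\E\|X\|_{d,\kappa}^2<\infty$ yields $\sum_{j>k}\lambda_j=O(k^{-(2d+2\kappa)})$. The monotonicity argument above then gives $\lambda_k=O(k^{-(2d+2\kappa+1)})$. Since $\kappa\le1$ implies $2\kappa\ge\kappa$, this is $O(k^{-(2d+\kappa+1)})$.

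\emph{Second claim.} This follows immediately from \eqref{eq:eigenvaluerelation}, $\lambda_{\delta,t,k}\le\lambda_k$, for every fixed $t$. As an alternative check, $\delta_t=X^\mu-v_tX^\mu(t)$ with $v_t=\sigma_X(t,\cdot)/\sigma_X(t,t)$, and one could repeat the argument directly. That is unnecessary, though, since the min–max inequality already transfers the bound.

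\emph{Main obstacle.} The main technical point is measurability and integrability. We need $\|X\|_{d,\kappa}$ to be a random variable, and the projection bound must hold pathwise so that taking expectations is legitimate. Both are routine given joint measurability (Assumption~\ref{I-ass:rvs}). The piecewise-polynomial approximation, in the form of a Jackson estimate, is standard.
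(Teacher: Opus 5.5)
Your proof is correct, but it takes a different route from the paper. The paper works at the level of the covariance kernel. It asserts that $\partial_t^d\partial_s^d\sigma_X(t,s)=\Cov(X^{(d)}(t),X^{(d)}(s))$. It then shows via Cauchy--Schwarz that this mixed derivative is $\kappa$-Hölder in one argument, with constant $\E(K_X^2)^{1/2}\sup_r\E(X^{(d)}(r)^2)^{1/2}$, and cites the Cochran--Lukas eigenvalue theorem for differentiable kernels. The second claim comes from \eqref{eq:eigenvaluerelation}, exactly as in your proposal.

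You instead combine three ingredients: the Ky Fan (PCA) identity $\sum_{j>k}\lambda_j=\min_{\dim V=k}\E\|X^\mu-\Pi_VX^\mu\|^2$, an explicit piecewise-Taylor subspace, and the averaging step $k\lambda_{2k}\le\sum_{j>k}\lambda_j$. This gives a self-contained argument that never differentiates $\sigma_X$ under the expectation, a step the paper takes without comment. It also gives a sharper conclusion, $\lambda_k=O(k^{-(2d+2\kappa+1)})$. The reason is that path regularity enters squared through $\E\|X^\mu-\Pi_{V_k}X^\mu\|^2=O(k^{-2(d+\kappa)})$, whereas the paper's Cauchy--Schwarz step only certifies kernel Hölder exponent $\kappa$. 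A side consequence is that the lower bound in Assumption~\ref{A-ass:eigenfct} can in fact only hold for $\tau\ge 2d+2\kappa+1$.

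The paper's kernel-level formulation is shorter given the citation. It also applies whenever the kernel itself is regular, which can exceed what pathwise Hölder exponents give. For Brownian motion, $\min(t,s)$ is Lipschitz in each argument, so Cochran--Lukas yields $k^{-2}$ directly. Feeding your argument the pathwise exponents $\kappa<1/2$ gives only $O(k^{-(1+2\kappa)})$.

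A small simplification for your centering step: by linearity of $\Pi_{V_k}$, $\E\|X^\mu-\Pi_{V_k}X^\mu\|^2\le\E\|X-\Pi_{V_k}X\|^2$. So you need no regularity of $\E X$ at all.
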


Compared to classical functional regression problems \citep[cf.][]{Yao_Mueller_Wang_2005,Hall_Horowitz_2007,crambes2009}, our estimation approach faces the additional difficulty that the orthogonalized scalar-on-function regression in \eqref{eq:betat} involves an unobserved functional predictor. The ideal predictor is
$$
\delta_{i,t}(s)
=
X_i^\mu(s)-v_t(s)X_i^\mu(t),
$$
whereas the feasible estimator uses the plug-in version
$$
\widehat\delta_{i,t}(s)
=
X_i^{\bar\mu}(s)-\widehat v_t(s)X_i^{\bar\mu}(t).
$$
Thus, the asymptotic analysis has to relate the feasible residualized predictors $\{\widehat\delta_{i,t}\}_{i=1}^n$ to their infeasible population counterparts $\{\delta_{i,t}\}_{i=1}^n$. The following lemma shows that the plug-in residuals inherit the relevant Hölder regularity from the original predictor trajectories.

\begin{lemma}\label{lem:deltaHoelderproperties}
Let $X_1,\ldots,X_n$ be i.i.d.\ copies of $X$.
Suppose that $\E(\|X\|_{d,\kappa}^6)<\infty$ and that the variance of $X$ is uniformly bounded away from zero, i.e.\
$\inf_{s\in[0,1]}\sigma_X(s,s)>0$.
Then $\E(\|\widehat\delta_{i,t}\|_{d,\kappa}^2)<\infty$ for every $i=1,\ldots,n$ and $t\in[0,1]$.
\end{lemma}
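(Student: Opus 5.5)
The plan is to bound the Hölder norm of $\widehat\delta_{i,t}$ by products of Hölder norms of the sample curves and of $\widehat v_t$, and then to control moments using the sixth-moment assumption. Writing
\[
\widehat\delta_{i,t}(s)=X_i^{\bar\mu}(s)-\widehat v_t(s)X_i^{\bar\mu}(t),
\]
the triangle inequality for $\|\cdot\|_{d,\kappa}$ gives
\[
\|\widehat\delta_{i,t}\|_{d,\kappa}\le \|X_i^{\bar\mu}\|_{d,\kappa}+|X_i^{\bar\mu}(t)|\,\|\widehat v_t\|_{d,\kappa}.
\]
Since $X_i^{\bar\mu}=X_i-\bar X$, we have $\|X_i^{\bar\mu}\|_{d,\kappa}\le \|X_i\|_{d,\kappa}+n^{-1}\sum_j\|X_j\|_{d,\kappa}$, which has finite second moment. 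Moreover, $|X_i^{\bar\mu}(t)|\le \|X_i^{\bar\mu}\|_{d,\kappa}$. It therefore remains to control $\|\widehat v_t\|_{d,\kappa}$, where the map $s\mapsto \widehat\sigma_X(t,s)$ is the function $n^{-1}\sum_j X_j^{\bar\mu}(t)X_j^{\bar\mu}(\cdot)$. Its Hölder norm is at most $n^{-1}\sum_j |X_j^{\bar\mu}(t)|\,\|X_j^{\bar\mu}\|_{d,\kappa}\le n^{-1}\sum_j\|X_j^{\bar\mu}\|_{d,\kappa}^2$, so
\[
\|\widehat v_t\|_{d,\kappa}\le \frac{n^{-1}\sum_j\|X_j^{\bar\mu}\|_{d,\kappa}^2}{\widehat\sigma_X(t,t)}.
\]

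The main obstacle is the random denominator $\widehat\sigma_X(t,t)$, which for finite $n$ can be arbitrarily close to zero. A moment bound through $1/\widehat\sigma_X(t,t)$ alone would require negative moments of the sample variance, and these are not implied by the assumptions. I would avoid this by using Cauchy--Schwarz in the numerator rather than the crude bound above. We have
\[
|X_i^{\bar\mu}(t)|\,\|\widehat v_t\|_{d,\kappa}\le |X_i^{\bar\mu}(t)|\frac{\big(n^{-1}\sum_j X_j^{\bar\mu}(t)^2\big)^{1/2}\big(n^{-1}\sum_j\|X_j^{\bar\mu}\|^2_{d,\kappa}\big)^{1/2}}{\widehat\sigma_X(t,t)}.
\]
Because $X_i^{\bar\mu}(t)^2\le n\,\widehat\sigma_X(t,t)$, the ratio is at most
\[
\sqrt{n}\,\Big(n^{-1}\sum_j\|X_j^{\bar\mu}\|_{d,\kappa}^2\Big)^{1/2},
\]
which is free of the denominator. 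This gives, pathwise,
\[
\|\widehat\delta_{i,t}\|_{d,\kappa}\le \|X_i^{\bar\mu}\|_{d,\kappa}+\sqrt n\Big(n^{-1}\textstyle\sum_j\|X_j^{\bar\mu}\|_{d,\kappa}^2\Big)^{1/2},
\]
on the event $\widehat\sigma_X(t,t)>0$. On the complementary event all $X_j^{\bar\mu}(t)=0$, so $\widehat\delta_{i,t}=X_i^{\bar\mu}$ under any convention for $\widehat v_t$ (e.g.\ $0/0:=0$). Here $\inf_s\sigma_X(s,s)>0$ assures that this event is degenerate only in finite samples, and that $v_t$ is well defined.

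Squaring and taking expectations then yields
\[
\E\|\widehat\delta_{i,t}\|_{d,\kappa}^2\le 2\E\|X_i^{\bar\mu}\|_{d,\kappa}^2+2\E\|X_1^{\bar\mu}\|_{d,\kappa}^2<\infty.
\]
Second moments already suffice here, so the sixth-moment assumption is more than enough. If the authors intend a bound that is uniform in $n$, I would instead split on the event $\{\widehat\sigma_X(t,t)\ge \sigma_X(t,t)/2\}$. On this event one applies Hölder's inequality with the sixth moments to the product of three Hölder norms. The complement has probability $O(n^{-1})$ by Chebyshev's inequality, using fourth moments, and on it the $\sqrt n$ bound above is used, so its contribution to the second moment is $O(1)$ after applying Hölder's inequality.
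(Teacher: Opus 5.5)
Your argument proves the statement as literally posed (fixed $n$), but by a different route than the paper. The paper writes $\widehat\delta_{i,t}=X_i-\bar X-X_i^{\hat\sigma}(t)W_{n,t}$, where $X_i^{\hat\sigma}(t)=X_i^{\bar\mu}(t)/\widehat\sigma_X(t,t)^{1/2}$ is the studentized value and $W_{n,t}=n^{-1}\sum_jX_jX_j^{\hat\sigma}(t)$. It bounds $\|W_{n,t}\|_{d,\kappa}^2\le 2n^{-1}\sum_j\|X_j\|_{d,\kappa}^2$ by the same Cauchy--Schwarz step you use, via $n^{-1}\sum_j X_j^{\hat\sigma}(t)^2=1$. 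It then controls the random denominator through Lemma~\ref{lem:studentized}, which gives $\sup_{n\ge2}\sup_{i,t}\E|X_i^{\hat\sigma}(t)|^3<\infty$. That lemma splits on a good event and bounds the bad event by Markov at power $3$ plus Rosenthal. Hölder with the sixth moments then finishes. You replace that lemma by the deterministic bound $|X_i^{\hat\sigma}(t)|\le\sqrt n$, which the paper uses only on the bad event. This is more elementary and needs only second moments of $\|X\|_{d,\kappa}$, but it gives up uniformity in $n$.

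An arithmetic slip hides this. Squaring $\sqrt n\,(n^{-1}\sum_j\|X_j^{\bar\mu}\|_{d,\kappa}^2)^{1/2}$ gives $\sum_j\|X_j^{\bar\mu}\|_{d,\kappa}^2$, so your final display should read $\E\|\widehat\delta_{i,t}\|_{d,\kappa}^2\le 2\E\|X_i^{\bar\mu}\|_{d,\kappa}^2+2n\,\E\|X_1^{\bar\mu}\|_{d,\kappa}^2$. This is still finite, so the lemma as stated holds. However, the paper's bound is $n$-free, and that is what is needed downstream, where the lemma is used to verify (CKS-2) with a constant $C_1$ not depending on $n$.

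Your sketched uniform repair does not close as written. On $G^c$ the squared bound carries the factor $n$, and Hölder with sixth moments gives a contribution of order $n\,P(G^c)^{2/3}$. Chebyshev's $P(G^c)=O(n^{-1})$ therefore yields only $O(n^{1/3})$. You need $P(G^c)=O(n^{-3/2})$, which follows from Markov at power $3$ and Rosenthal's inequality using $\E|X(t)|^6<\infty$; this is exactly what Lemma~\ref{lem:studentized} does. Alternatively, a leave-one-out argument avoids the Hölder loss. Let $\widehat\sigma_{-j}$ be the sample variance of $\{X_k(t)\}_{k\ne j}$; then $n\widehat\sigma_X(t,t)\ge(n-1)\widehat\sigma_{-j}$. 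So $G^c$ is contained in an event independent of $X_j$ whose probability is $O(n^{-1})$ by Chebyshev. This gives $\E(\|X_j\|_{d,\kappa}^2\mathbbm{1}_{G^c})=O(n^{-1})$ for each $j$, and summing over $j$ yields an $O(1)$ contribution.
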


For the asymptotic analysis of our parameter estimators, we require additional assumptions.
These assumptions are used for the rate results below and are stronger than the assumptions needed for identification in Section~\ref{sec:identification}.
In particular, they impose further regularity conditions on the sample paths, moments, parameter functions, and on the eigenvalue and eigenfunction behavior of the covariance operator of $X$.

\begin{A-assumption}\
	\begin{enumerate}[label=\textbf{A.\arabic*}]
\item\label{A-ass:rvs} $X$ and $Y$ are $C[0,1]$-valued random variables.

\item \label{A-ass:X-structure}  Assume that there exist $d\in\mathbb N_0$ and $\kappa\in(0,1]$ such that
$\E(\|X\|_{d,\kappa}^6)<\infty$ and $\E(\|Y\|_{d,\kappa}^{3})<\infty$.
Moreover, $\inf_{t\in[0,1]}\sigma_X(t,t)>0$.
Finally, there exists a constant $C<\infty$ such that, for every
$t\in[0,1]$ and all $g,h\in L^2[0,1]$,
\begin{align}\label{eq:kurtosis-ass-X-adapt}
&\E\Big[
\big(\langle X^\mu,g\rangle-X^\mu(t)\langle v_t,g\rangle\big)^2
\big(\langle X^\mu,h\rangle-X^\mu(t)\langle v_t,h\rangle\big)^2
\Big]  \notag\\
&\qquad\le
C\,
\E\Big[
\big(\langle X^\mu,g\rangle-X^\mu(t)\langle v_t,g\rangle\big)^2
\Big]\,
\E\Big[
\big(\langle X^\mu,h\rangle-X^\mu(t)\langle v_t,h\rangle\big)^2
\Big].
\end{align}

\item\label{A-ass:param}
Let  $\alpha_0,\alpha\in C[0,1]$ and $\beta\in C(\Delta)$ where $\Delta = \{(t,s): 0 \leq s \leq t \leq 1\}$. Let $m\geq 2$ be the smoothing spline order. For each $t\in[0,1]$, the function $s\mapsto \beta(t,s)$ is $m$-times differentiable on $[0,t]$ with $\beta^{(m)}(t,\cdot) \in L^2[0,t]$ and $\beta(t,\cdot)\in \operatorname{Ker}(\Gamma_X)^\perp$.

\item\label{A-ass:eps-structure} $\varepsilon$ is independent of $X$ and satisfies $\E(\varepsilon(t)) = 0$ for all $t \in [0,1]$.

\item\label{A-ass:eigenfct}
The eigenvalues of $\Gamma_X$ satisfy $\lambda_k=O(k^{-2})$. Moreover,
there exist $k_0\ge1$, $C_\lambda>0$, and $\tau\ge 2d+\kappa+1$ such that $\lambda_k\ge C_\lambda k^{-\tau}$ for all $k\ge k_0$.
The mean squared eigenfunctions are bounded away from zero such that for each fixed $t\in[0,1]$, there exist $k_{0,t}\ge1$ and $C_{\psi,t}>0$ such that $\frac{1}{k}\sum_{j=1}^k\psi_j(t)^2\ge C_{\psi,t}$ for all $k\ge k_{0,t}$.
\item\label{A-ass:iid}
The observations $(X_i,Y_i)_{i=1,\dots,n}$ are i.i.d.\ copies of $(X,Y)$.
\end{enumerate}
\end{A-assumption}

Assumptions \ref{A-ass:rvs}--\ref{A-ass:eigenfct} together imply Assumptions \ref{I-ass:rvs}--\ref{I-ass:eigenfct}.
Assumption \ref{A-ass:rvs} implies joint measurability of $X$ and $Y$ by Theorem 7.4.2 in \cite{Hsing_2015_Book} and, together with Assumption \ref{A-ass:X-structure}, controls the smoothness of the sample paths.
Our framework accommodates both smooth paths ($d$ large) and rough paths ($d=0$ with small $\kappa$).
The moment bound in the Hölder norm directly implies $\sup_{t \in [0,1]} \E(|X^{(\ell)}(t)|^6) < \infty$ for $0 \leq \ell \leq d$ and ensures that the pathwise Hölder constant
$$
	K_X := \sup_{s \neq t} \frac{|X^{(d)}(t)-X^{(d)}(s)|}{|t-s|^{\kappa}}
$$
satisfies $\E(K_X^6) < \infty$.
The additional variance-positivity condition excludes degenerate predictors.
Condition \eqref{eq:kurtosis-ass-X-adapt} is a uniform kurtosis-type condition
for the linear functionals of $X$ that arise after orthogonalizing $X$ with
respect to its value $X(t)$. It requires products of squared orthogonalized
linear functionals to be bounded by the product of their second moments. In the
special case $g=h$, this reduces to the usual fourth-moment condition used in
functional linear regression (see, e.g., \citealt{Hall_Horowitz_2007} and \citealt{crambes2009}). The condition holds, for example, if $X$ is a
Gaussian process.
Assumption~\ref{A-ass:param} imposes the required regularity on the parameter functions.
In particular, for each fixed $t$, the nonzero part $s\mapsto \beta(t,s)$ on $[0,t]$ belongs to the Sobolev space $W^{m,2}[0,t]$ on which the smoothing-spline estimator is defined.
Assumption \ref{A-ass:eps-structure} imposes standard conditions on the regression error.
Assumption~\ref{A-ass:eigenfct} complements Assumption~\ref{I-ass:eigenfct} by specifying how slow eigenvalues need to converge and how fast the cumulative squared eigenfunctions need to diverge, which is required to derive the rate of consistency of $\widehat{\alpha}(t)$.
The upper bound $\lambda_k=O(k^{-2})$ is a technical summability condition that is automatically satisfied for continuously differentiable predictor trajectories by Lemma~\ref{lem:eigenvaluedecay}, and it is also compatible with nondifferentiable processes such as Brownian-motion-type predictors, whose covariance eigenvalues decay at order $k^{-2}$.
Thus, this condition does not by itself exclude rough functional predictors, although for very rough paths it has to be verified as an additional eigenvalue regularity condition.
Lemma~\ref{lem:eigenvaluedecay} explains the condition $\tau\ge 2d+\kappa+1$ in the lower eigenvalue bound of Assumption~\ref{A-ass:eigenfct}.
Assumption \ref{A-ass:iid} is a standard sampling condition in regression analysis.

We formulate our consistency results with respect to the $\Gamma_{\delta,t}$-specific semi-norm, which is the natural semi-norm for the scalar-on-function regression in \eqref{eq:betat} and focuses only on the identifiable components of $\beta_t$:
\begin{align*}
	\|\widehat\beta_t-\beta_t\|_{\Gamma_{\delta,t}}&=\langle\widehat\beta_t-\beta_t, \Gamma_{\delta,t}(\widehat\beta_t-\beta_t)\rangle^{1/2}  = \E\big( \langle \widehat \beta_t-\beta_t,\delta_{t}\rangle^2 \big)^{1/2}.
\end{align*}

\begin{theorem} \label{thm:consistency}
Suppose Assumptions \ref{A-ass:rvs}--\ref{A-ass:iid} hold. Let $p = p(n)$ be
an integer sequence with $p(n) \to \infty$ such that $\sqrt{n}/p(n) = O(1)$ if
$d \ge 1$ and $n^{1/(2\kappa)}/p(n) = O(1)$ if $d = 0$. Define
$\varphi = (2d + \kappa + 2m + 1)/(2d + \kappa + 2m + 2)$ and choose the
smoothing parameter $\rho = \rho(n)$ so that $\rho(n) \sim n^{-\varphi}$.
Then, as $n \to \infty$, for every fixed $0 < t \le 1$,
\begin{itemize}
  \item[(a)] $\|\widehat\beta_t - \beta_t\|_{\Gamma_{\delta,t}}
         = O_P\bigl(n^{-\varphi/2}\bigr)$;
  \item[(b)] $\widehat\alpha(t) - \alpha(t)
         = O_P\bigl(n^{-\varphi/(2\tau)}\bigr)$.
\end{itemize}
\end{theorem}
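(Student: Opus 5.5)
For part (a), the plan is to treat $\widehat\beta_t$ as the \citet{crambes2009} smoothing-spline estimator in the infeasible regression \eqref{eq:infeasiblemodel}. This regression has predictors $\widehat\delta_{i,t}$ and errors $\varepsilon_i^{\bar\mu}(t)$, which are independent of $X$ by Assumption~\ref{A-ass:eps-structure}. I would first run the Crambes--Kneip--Sarda argument conditionally on $X_1,\dots,X_n$, measuring the error in the empirical semi-norm $\|b\|_{n,t}^2=n^{-1}\sum_i\langle b,\widehat\delta_{i,t}\rangle^2$. The decomposition has three pieces: a bias term of order $\rho\|\beta_t^{(m)}\|^2$ from the penalty; a variance term of order $n^{-1}\operatorname{tr}\{(\widehat\Gamma_{\delta}+\rho \bm A)^{-1}\widehat\Gamma_\delta\}$; and a discretization error from replacing integrals by Riemann sums over the grid.

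The effective-dimension trace is bounded through the eigenvalue decay $\lambda_{\delta,t,k}=O(k^{-(2d+\kappa+1)})$ from Lemma~\ref{lem:eigenvaluedecay}, combined with the Sobolev penalty (eigenvalues of order $k^{2m}$). This gives a trace of order $\rho^{-1/(2d+\kappa+2m+1)}$, so balancing against $\rho$ yields the rate $\rho\sim n^{-\varphi}$ and $\|\cdot\|_{n,t}^2=O_P(n^{-\varphi})$. The discretization error is controlled by the Hölder regularity of $\widehat\delta_{i,t}$ from Lemma~\ref{lem:deltaHoelderproperties}, which gives Riemann-sum errors of order $p^{-1}$ when $d\ge1$ and $p^{-\kappa}$ when $d=0$. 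These are $O(n^{-1/2})$ under the stated conditions on $p(n)$.

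Two transfers are then needed. The first goes from the feasible predictors $\widehat\delta_{i,t}$ to the infeasible $\delta_{i,t}$. Since
\[
\widehat\delta_{i,t}-\delta_{i,t}=-(\widehat v_t-v_t)X_i^{\mu}(t)-\big(\bar X-\E X\big)+\widehat v_t\big(\bar X(t)-\E X(t)\big),
\]
and $\|\widehat v_t-v_t\|_{d,\kappa}=O_P(n^{-1/2})$ by the sixth-moment condition, the perturbation is a rank-$\le2$ term of size $n^{-1/2}$. Its contribution to the squared norms is $O_P(n^{-1})=o(n^{-\varphi})$. The second transfer goes from the empirical norm to $\|\cdot\|_{\Gamma_{\delta,t}}$. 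Here I would use the uniform kurtosis condition \eqref{eq:kurtosis-ass-X-adapt}, exactly as the fourth-moment condition is used in \citet{crambes2009}, to show that $\widehat\Gamma_{\delta,t}$ and $\Gamma_{\delta,t}$ are equivalent on the relevant regularized ball: $\|(\Gamma_{\delta,t}+\rho A)^{-1/2}(\widehat\Gamma_{\delta,t}-\Gamma_{\delta,t})(\Gamma_{\delta,t}+\rho A)^{-1/2}\|=o_P(1)$. Together these give (a).

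For part (b), write
\[
\widehat\alpha(t)-\alpha(t)=\big(\widehat\alpha^\star(t)-\alpha^\star(t)\big)-\langle\widehat\beta_t-\beta_t,v_t\rangle-\langle\widehat\beta_t,\widehat v_t-v_t\rangle .
\]
The first and last terms are $O_P(n^{-1/2})$ by standard moment arguments; for the last one also needs $\|\widehat\beta_t\|=O_P(1)$, which follows from the penalty. The key term is $\langle\widehat\beta_t-\beta_t,v_t\rangle$, and this is the main obstacle. The function $v_t=\sigma_X(t,\cdot)/\sigma_X(t,t)$ is not in the range of $\Gamma_{\delta,t}^{1/2}$ in a quantifiable way; identification (Assumption~\ref{I-ass:eigenfct}) says precisely that point evaluation is not $L^2$-representable. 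So the $\Gamma_{\delta,t}$ semi-norm does not directly control this functional, and one needs an interpolation argument.

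To handle it, let $\Delta_\beta:=\widehat\beta_t-\beta_t$ and expand $\Delta_\beta$ and $v_t$ in $\{\psi_j\}$, using $\langle v_t,\psi_j\rangle=\lambda_j\psi_j(t)/\sigma_X(t,t)$. The identity $\langle b,\Gamma_{\delta,t}b\rangle=\langle b,\Gamma_Xb\rangle-\sigma_X(t,t)\langle b,v_t\rangle^2$ means that a large value of $c:=\langle \Delta_\beta,v_t\rangle$ with small $\|\Delta_\beta\|_{\Gamma_{\delta,t}}$ forces $\langle \Delta_\beta,\Gamma_X\Delta_\beta\rangle\approx\sigma_X(t,t)c^2$. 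Achieving this with an $L^2$ function concentrated on the first $k$ components costs $\|\Delta_\beta\|_{\Gamma_X}^2-\sigma_X(t,t)c^2\gtrsim c^2/\sum_{j\le k}\psi_j(t)^2$, the finite-dimensional Cauchy--Schwarz defect.

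I would then split $\Delta_\beta$ into its projection on the first $k$ eigenfunctions and a tail. For the head, the lower bound $\sum_{j\le k}\psi_j(t)^2\ge C_{\psi,t}k$ from Assumption~\ref{A-ass:eigenfct} gives $c_{\text{head}}^2\lesssim k\,\|\Delta_\beta\|_{\Gamma_{\delta,t}}^2$ up to tail corrections. For the tail, $\lambda_k=O(k^{-2})$ and the boundedness of $\sum_j\lambda_j\psi_j(t)^2=\sigma_X(t,t)$ give $|c_{\text{tail}}|\lesssim\|\Delta_\beta\|\big(\sum_{j>k}\lambda_j^2\psi_j(t)^2\big)^{1/2}\lesssim\|\Delta_\beta\|\,\lambda_k^{1/2}$. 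The bound $\|\Delta_\beta\|=O_P(1)$ from the penalty, together with $\lambda_k\ge C_\lambda k^{-\tau}$, converts the head bound into one of the form $|c|^2\lesssim \lambda_k^{-1}\|\Delta_\beta\|^2_{\Gamma_{\delta,t}}$ when optimized. Choosing $k$ with $k^{-\tau}\asymp n^{-\varphi/\tau}$ to balance the two parts yields $|c|=O_P(n^{-\varphi/(2\tau)})$. Getting this balance exactly to the stated exponent is where I expect the most bookkeeping; the point is that the slower the eigenvalue decay (small $\tau$, rough paths), the better the rate, as claimed.
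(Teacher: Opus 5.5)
Your part (a) is fine in outline. Applying Crambes--Kneip--Sarda to the feasible residuals $\widehat\delta_{i,t}$ and then passing to $\delta_{i,t}$ through a rank-two correction of order $O_P(n^{-1})$ is what the paper does. Your last transfer, a relative operator bound on the scale $\Gamma_{\delta,t}+\rho A$, replaces the paper's eigen-expansion split at $K_n=\lfloor n^{1/2}\rfloor$ (where the cross term is absorbed). It is a legitimate alternative, provided you also show $\rho\langle\Delta_\beta,A\Delta_\beta\rangle=O_P(\rho)$.

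The genuine gap is in part (b), at the step bounding $c=\langle\Delta_\beta,v_t\rangle$; none of your head inequalities hold. Write $S_k=\sum_{j\le k}\lambda_j\psi_j(t)^2$ and $T_k=\sum_{j>k}\lambda_j\psi_j(t)^2$.
\begin{itemize}
\item For $f\in\operatorname{span}\{\psi_1,\dots,\psi_k\}$ with $\langle f,v_t\rangle=c$, the exact Cauchy--Schwarz defect is $\min\|f\|^2_{\Gamma_{\delta,t}}=\sigma_X(t,t)\,c^2\,T_k/S_k$. It is governed by the weighted tail $T_k$, not by $1/\sum_{j\le k}\psi_j(t)^2$.
\item Even granting your form, $c^2\lesssim\|\Delta_\beta\|^2_{\Gamma_{\delta,t}}\sum_{j\le k}\psi_j(t)^2$ becomes $c^2\lesssim k\|\Delta_\beta\|^2_{\Gamma_{\delta,t}}$ only with an \emph{upper} bound on $\sum_{j\le k}\psi_j(t)^2$. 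A.5 supplies only a lower bound.
\item The ``optimized'' form $c^2\lesssim\lambda_k^{-1}\|\Delta_\beta\|^2_{\Gamma_{\delta,t}}$ is false for every fixed $k$. For $f_K=\sum_{j\le K}\psi_j(t)\psi_j$ one has $\langle f_K,v_t\rangle=S_K/\sigma_X(t,t)\to1$, while $\|f_K\|^2_{\Gamma_{\delta,t}}=S_KT_K/\sigma_X(t,t)\to0$. This is exactly your own remark that $v_t$ is not controlled by the $\Gamma_{\delta,t}$-seminorm.
\end{itemize}
Any correct bound must contain $\|\Delta_\beta\|$ and must use $\sum_{j\le k}\psi_j(t)^2\ge C_{\psi,t}k$ in a denominator.

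The missing idea is the coordinatewise decomposition $b_j=c\,\psi_j(t)+\tilde b_j$, where $b_j=\langle\Delta_\beta,\psi_j\rangle$. Using $\sum_j\lambda_j\psi_j(t)^2=\sigma_X(t,t)$ and $\sum_j\lambda_j\psi_j(t)b_j=\sigma_X(t,t)\,c$, one gets exactly
\[
\sum_j\lambda_j\tilde b_j^2=\langle\Delta_\beta,\Gamma_X\Delta_\beta\rangle-\sigma_X(t,t)c^2=\|\Delta_\beta\|^2_{\Gamma_{\delta,t}}.
\]
Since $\lambda_j\ge\lambda_k$ for $j\le k$, this gives
\[
c^2\sum_{j\le k}\psi_j(t)^2=\sum_{j\le k}(b_j-\tilde b_j)^2\le 2\|\Delta_\beta\|^2+2\lambda_k^{-1}\|\Delta_\beta\|^2_{\Gamma_{\delta,t}} .
\]
Now combine $\lambda_k^{-1}\le Ck^{\tau}$, part (a), $\|\Delta_\beta\|=O_P(1)$ and $k\asymp n^{\varphi/\tau}$, so that $k^{\tau}n^{-\varphi}=O(1)$. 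The right-hand side is then $O_P(1)$, and $c^2=O_P(1/k)=O_P(n^{-\varphi/\tau})$. Your choice $k^{-\tau}\asymp n^{-\varphi/\tau}$ gives $k\asymp n^{\varphi/\tau^2}$ and the wrong exponent. No head/tail split of $c$ and no use of $\lambda_k=O(k^{-2})$ is needed here; that upper bound matters only in the paper's proof of (a).
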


Theorem~\ref{thm:consistency} highlights a smoothness-driven trade-off between the estimation of the historical coefficient $\beta_t$ and the concurrent coefficient $\alpha(t)$. When $X$ has smoother trajectories, corresponding to larger values of $d$, the convergence rate for $\widehat\beta_t$ approaches the parametric rate, since $\varphi/2\to 1/2$ as $d\to\infty$.
At the same time, the rate for $\widehat\alpha(t)$ depends on the accuracy with which the correction term $\langle \beta_t,v_t\rangle$ in the identity $\alpha(t)=\alpha^\star(t)-\langle \beta_t,v_t\rangle$ can be estimated.
This step is controlled by the eigenvalue lower bound in
Assumption~\ref{A-ass:eigenfct}. Since Hölder regularity implies the upper
bound $\lambda_k=O(k^{-(2d+\kappa+1)})$, the lower bound
$\lambda_k\ge C_\lambda k^{-\tau}$ can hold only for exponents
$\tau\ge 2d+\kappa+1$.
Thus, smoother trajectories typically lead to a weaker rate bound for $\widehat\alpha(t)$. Conversely, for rougher predictors, such as $d=0$ with small $\kappa$, the convergence rate for $\widehat\alpha(t)$ improves, while the historical estimator $\widehat\beta_t$ converges more slowly.

We now analyze the prediction error for a new random function $X_{n+1}$ with the same distribution as $X$ and independent of the training sample $V_n:=\{(X_i,Y_i):i=1,\dots,n\}$, which is given by
$$m_t(X_{n+1}) :=E(Y(t)|X=X_{n+1})=\alpha_0(t)+\alpha(t)X_{n+1}(t)
+\langle  \beta_t,  X_{n+1}\rangle.$$

\begin{theorem}\label{thm:predictionerror}
Suppose that the conditions of Theorem \ref{thm:consistency} are satisfied, and let $n \to \infty$.
\begin{itemize}
	\item[(a)] For any fixed $0<t\leq 1$, the in-sample error satisfies:
$$
	\frac{1}{n} \sum_{i=1}^n \left(m_t(X_i) -\widehat\alpha_0(t)-\widehat\alpha(t)X_i(t)
-\langle \widehat\beta_t, X_i \rangle\right)^2
= O_P(n^{-\varphi}).
$$
	\item[(b)] For any fixed $0<t\leq 1$, the conditional out-of-sample error satisfies:
	\begin{align*}
	&\mathbb{E}\bigg(\Big(m_t(X_{n+1})-\widehat\alpha_0(t)-\widehat\alpha(t)X_{n+1}(t)
	-\langle \widehat\beta_t,X_{n+1}\rangle\Big)^2\,\Big|\,V_n\bigg)=O_P(n^{-\varphi}).
	\end{align*}
\end{itemize}
\end{theorem}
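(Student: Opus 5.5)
The plan is to decompose the prediction error at each $X$ into pieces that Theorem~\ref{thm:consistency} already controls. Work with centered quantities. Since $\widehat\alpha_0(t)=\bar Y(t)-\widehat\alpha(t)\bar X(t)-\langle\widehat\beta_t,\bar X\rangle$ and $\alpha_0(t)=\E Y(t)-\alpha(t)\E X(t)-\langle\beta_t,\E X\rangle$, we have for any $x$
\[
m_t(x)-\widehat\alpha_0(t)-\widehat\alpha(t)x(t)-\langle\widehat\beta_t,x\rangle
=(\alpha(t)-\widehat\alpha(t))(x(t)-\bar X(t))+\langle\beta_t-\widehat\beta_t,x-\bar X\rangle-\bar\varepsilon(t),
\]
where $\bar\varepsilon(t)=n^{-1}\sum_i\varepsilon_i(t)=O_P(n^{-1/2})$. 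The latter contributes $O_P(n^{-1})=o_P(n^{-\varphi})$.

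The key step is not to bound the $\alpha$-term and the $\beta$-term separately. The rate for $\widehat\alpha(t)$ in Theorem~\ref{thm:consistency}(b) is too slow, and $\|\cdot\|_{\Gamma_X}$ is not controlled by $\|\cdot\|_{\Gamma_{\delta,t}}$. Instead, I would orthogonalize exactly as in the estimator: $x-\bar X=\widehat v_t\,(x(t)-\bar X(t))+\widehat\delta_t(x)$, where $\widehat\delta_t(x)=x-\bar X-\widehat v_t(x(t)-\bar X(t))$. The error then becomes
\[
\big(\alpha(t)-\widehat\alpha(t)+\langle\beta_t-\widehat\beta_t,\widehat v_t\rangle\big)(x(t)-\bar X(t))+\langle\beta_t-\widehat\beta_t,\widehat\delta_t(x)\rangle .
\]
The bracket equals $\widetilde\alpha^\star(t)-\widehat\alpha^\star(t)$, because $\widehat\alpha(t)=\widehat\alpha^\star(t)-\langle\widehat\beta_t,\widehat v_t\rangle$ and $\widetilde\alpha^\star(t)=\alpha(t)+\langle\beta_t,\widehat v_t\rangle$. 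This is just a ratio of sample covariances. Using $Y_i^{\bar\mu}(t)=\widetilde\alpha^\star X_i^{\bar\mu}(t)+\langle\beta_t,\widehat\delta_{i,t}\rangle+\varepsilon_i^{\bar\mu}(t)$ and the empirical orthogonality $\sum_iX_i^{\bar\mu}(t)\widehat\delta_{i,t}=0$, we get
\[
\widehat\alpha^\star(t)-\widetilde\alpha^\star(t)=\frac{n^{-1}\sum_iX_i^{\bar\mu}(t)\varepsilon_i(t)}{\widehat\sigma_X(t,t)}=O_P(n^{-1/2}),
\]
by independence of $\varepsilon$ and $X$ and $\inf\sigma_X>0$. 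Since $x(t)-\bar X(t)$ has bounded second moment, this term contributes $O_P(n^{-1})$ to both (a) and (b).

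It remains to control the term $\langle\beta_t-\widehat\beta_t,\widehat\delta_t(x)\rangle^2$. For (a) this is the in-sample term $n^{-1}\sum_i\langle\widehat\beta_t-\beta_t,\widehat\delta_{i,t}\rangle^2$, which is the empirical prediction semi-norm of the smoothing-spline estimator in the infeasible regression \eqref{eq:infeasiblemodel}. The proof of Theorem~\ref{thm:consistency}(a), following \citet{crambes2009}, yields $O_P(n^{-\varphi})$ for this quantity; it is in fact an intermediate step there. One caveat is that the discretization error between the integral $\langle\cdot,\widehat\delta_{i,t}\rangle$ and the Riemann sum in \eqref{splinedef} must be handled. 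Via Lemma~\ref{lem:deltaHoelderproperties} and the conditions on $p(n)$, it is $O_P(p^{-2\min(1,d+\kappa)})=o_P(n^{-\varphi})$ or comparable.

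For (b), conditionally on $V_n$, $\widehat\delta_t(X_{n+1})=\delta_{n+1,t}+r_n$. Here $r_n=(\E X-\bar X)-\widehat v_t(\E X(t)-\bar X(t))+(v_t-\widehat v_t)X^\mu_{n+1}(t)$, and its coefficients are $O_P(n^{-1/2})$ in sup norm. Hence
\[
\E\big(\langle\widehat\beta_t-\beta_t,\widehat\delta_t(X_{n+1})\rangle^2\,\big|\,V_n\big)\le2\|\widehat\beta_t-\beta_t\|_{\Gamma_{\delta,t}}^2+O_P(n^{-1})\|\widehat\beta_t-\beta_t\|^2 .
\]
The first term is $O_P(n^{-\varphi})$ by Theorem~\ref{thm:consistency}(a). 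The main obstacle is the second term, which requires $\|\widehat\beta_t-\beta_t\|_{L^2}=O_P(n^{1/2-\varphi/2})$, i.e.\ only $o_P(n^{1/2})$ boundedness-type control. I would get this from the penalty: the spline minimizer satisfies $\rho(\|\pi_{\widehat b}\|^2+\|\widehat\beta_t^{(m)}\|^2)=O_P(\rho)$, i.e.\ its Sobolev norm is $O_P(1)$, as in \citet{crambes2009}. Therefore $\|\widehat\beta_t-\beta_t\|=O_P(1)$, and the second term is $O_P(n^{-1})$. Collecting the terms gives $O_P(n^{-\varphi})$ in both parts.
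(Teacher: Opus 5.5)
Your proposal is correct and follows essentially the paper's route. Rewriting the error through $\widetilde\alpha^\star(t)-\widehat\alpha^\star(t)=O_P(n^{-1/2})$ and the empirically orthogonalized predictor reduces (a) to the bound $\langle\widehat\beta_t-\beta_t,\widehat\Gamma_{\delta,t}(\widehat\beta_t-\beta_t)\rangle=O_P(n^{-\varphi})$ from the smoothing-spline step of Crambes et al., and reduces (b) to $\|\widehat\beta_t-\beta_t\|_{\Gamma_{\delta,t}}^2$ plus an $O_P(n^{-1})\|\widehat\beta_t-\beta_t\|^2$ perturbation, controlled by $\|\widehat v_t-v_t\|=O_P(n^{-1/2})$ and $\|\widehat\beta_t-\beta_t\|=O_P(1)$, exactly as in the paper. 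Your differences are cosmetic: you absorb the intercept error into $\bar\varepsilon(t)$ via the model, and you use $(a+b)^2\le 2a^2+2b^2$ where the paper computes the conditional second moment exactly; note also that $L^2$ rather than sup-norm control of $\bar X-\E X$ and $\widehat v_t-v_t$ is all you need.
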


Although Theorems~\ref{thm:consistency}--\ref{thm:predictionerror} provide convergence rates for $\widehat{\beta}_t$ and $\widehat{\alpha}$, deriving non-degenerate weak limits for the full structural estimators is substantially more delicate. This is not surprising, given the ill-posed inverse nature of functional linear regression, where limiting distributions for functional coefficient estimators typically require additional structure, such as restrictions on the cross-covariance operator \citep[see][]{Crambes.2013,otto2025functional}.

We therefore state a distributional result for the well-posed concurrent-only coefficient
$$
\alpha^\star(t)=\frac{\sigma_{XY}(t,t)}{\sigma_X(t,t)}.
$$
As discussed in Section~\ref{sec:identification}, $\alpha^\star(t)$ is the coefficient in the best linear prediction of $Y(t)$ from the concurrent value $X(t)$. It generally differs from the structural concurrent coefficient $\alpha(t)$, since it also contains the part of the historical effect that is linearly associated with $X(t)$. Thus, the following result should not be interpreted as a weak limit for $\widehat\alpha$. Rather, it provides a Gaussian-process approximation for the reduced concurrent coefficient $\alpha^\star$, which underlies the simultaneous confidence bands for $\alpha^\star$ considered in Sections~\ref{sec:simulations} and~\ref{sec:applications}.

\begin{theorem}\label{thm:alphaStar}
Suppose that the conditions of Theorem \ref{thm:consistency} are satisfied, and assume in addition that there exists
$q>\max\{6,3/\tilde\kappa\}$, with
$\tilde\kappa=\kappa$ if $d=0$ and $\tilde\kappa=1$ if $d\ge1$, such that $\E(\|X\|_{d,\kappa}^{q})<\infty$ and $\E(\|Y\|_{d,\kappa}^{q/2})<\infty$.
Let $n \to \infty$.
\begin{enumerate}[label=(\alph*)]
\item\label{res:alphaStar}
The estimator $\widehat \alpha^\star(t) = \widehat \sigma_{XY}(t,t)/\widehat \sigma_X(t,t)$ satisfies
$$
\sqrt{n}\left(\widehat{\alpha}^\star - \alpha^\star\right)\to_D\mathcal{GP}(0,\sigma_{\widehat{\alpha}^\star})\quad\text{in}\quad C[0,1],
$$
with mean zero and covariance function
$$
\sigma_{\widehat{\alpha}^\star}
=\Big\{\sigma_{\widehat{\alpha}^\star}(s,t) = \frac{\sigma_Z(s,t)}{\sigma_X(s,s) \sigma_X(t,t)}: s,t\in[0,1]\Big\},
$$
where
$$
\sigma_Z(s,t) = \mathrm{Cov}(Z_i(s), Z_i(t)), \quad Z_i(t) = X_i^{\mu}(t)(Y_i^{\mu}(t) - \alpha^\star(t) X_i^{\mu}(t)).
$$
\item\label{res:UnifConsCeCXX}
The covariance function $\sigma_{\widehat{\alpha}^\star}(s,t)$ can be estimated uniformly consistently,
$$
\sup_{s,t\in[0,1]}\left|\widehat{\sigma}_{\widehat{\alpha}^\star}(s,t) - \sigma_{\widehat{\alpha}^\star}(s,t)\right|
= o_P(1),
$$
where, for $s,t \in [0,1]$,
$$
	\widehat{\sigma}_{\widehat{\alpha}^\star}(s,t)
= \frac{\widehat \sigma_Z(s,t)}{\widehat \sigma_X(s,s) \widehat \sigma_X(t,t)},
$$
with
$$
	\widehat \sigma_Z(s,t) = \frac{1}{n} \sum_{i=1}^n \widehat Z_i(s) \widehat Z_i(t), \quad \widehat Z_i(t) = X_i^{\bar \mu}(t)(Y_i^{\bar \mu}(t) - \widehat \alpha^\star(t) X_i^{\bar \mu}(t)).
$$
\end{enumerate}
\end{theorem}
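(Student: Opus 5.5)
We prove part (a) via a functional central limit theorem for the numerator and denominator, followed by the delta method in $C[0,1]$; part (b) then follows from uniform laws of large numbers.

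\textbf{Linearization.} Write
\[
\widehat\alpha^\star(t)-\alpha^\star(t)=\frac{\widehat\sigma_{XY}(t,t)-\alpha^\star(t)\widehat\sigma_X(t,t)}{\widehat\sigma_X(t,t)}
=\frac{n^{-1}\sum_i X_i^{\bar\mu}(t)\big(Y_i^{\bar\mu}(t)-\alpha^\star(t)X_i^{\bar\mu}(t)\big)}{\widehat\sigma_X(t,t)}.
\]
Replacing sample means by population means changes the numerator by $-(\bar X(t)-\E X(t))\,(\bar Y(t)-\E Y(t)-\alpha^\star(t)(\bar X(t)-\E X(t)))$. Since $\bar X-\E X$ and $\bar Y-\E Y$ are $O_P(n^{-1/2})$ in sup-norm (CLT in $C[0,1]$ under the Hölder moments), this term is $O_P(n^{-1})$ uniformly in $t$. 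Hence the numerator equals $n^{-1}\sum_i Z_i(t)+O_P(n^{-1})$ uniformly. Moreover $\E Z_i(t)=\sigma_{XY}(t,t)-\alpha^\star(t)\sigma_X(t,t)=0$ by the definition of $\alpha^\star$.

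For the denominator, $\sup_t|\widehat\sigma_X(t,t)-\sigma_X(t,t)|=o_P(1)$ by a uniform LLN. Since $\inf_t\sigma_X(t,t)>0$ by \ref{A-ass:X-structure}, it suffices to show $n^{-1/2}\sum_i Z_i\to_D\mathcal{GP}(0,\sigma_Z)$ in $C[0,1]$ and then apply Slutsky in $C[0,1]$. Note that $\alpha^\star$ is continuous, being a ratio of continuous kernels.

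\textbf{Main step: tightness of $n^{-1/2}\sum_i Z_i$.} Finite-dimensional convergence follows from the multivariate CLT, because $\E Z(t)^2\le \E(\|X\|_\infty^4\|Y\|_\infty^2)$ is finite by Hölder's inequality with $\E\|X\|^q<\infty$ and $\E\|Y\|^{q/2}<\infty$. For tightness we use the Jain--Marcus CLT in $C[0,1]$, or equivalently a Kolmogorov-type moment criterion. Each $X,Y$ is $\tilde\kappa$-Hölder on $[0,1]$ with constant bounded by the Hölder norm: $\kappa$ if $d=0$, and Lipschitz by the mean value theorem if $d\ge1$. Then
\[
|Z(s)-Z(t)|\le M\,|s-t|^{\tilde\kappa},\qquad M\lesssim \|X\|_{d,\kappa}^2\|Y\|_{d,\kappa}+\|X\|_{d,\kappa}^3,
\]
also using the Hölder continuity of $\alpha^\star$, which inherits regularity from $\sigma_X,\sigma_{XY}$. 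This Lipschitz-type (in $|s-t|^{\tilde\kappa}$) structure with $\E M^2<\infty$ gives the Jain--Marcus CLT directly, since $\int_0^1\sqrt{\log N(\epsilon)}\,d\epsilon<\infty$ for the metric $|s-t|^{\tilde\kappa}$.

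We expect this tightness step to be the main obstacle. The moment requirements are where $q>6$ enters: $M$ involves products of three process norms, so $\E M^{r}<\infty$ needs $q\ge 3r$. The alternative route via Kolmogorov--Chentsov, bounding $\E|n^{-1/2}\sum_i(Z_i(s)-Z_i(t))|^{r}\lesssim \E M^{r}|s-t|^{r\tilde\kappa}$ by Rosenthal, requires $r\tilde\kappa>1$ and $q\ge 3r$, i.e.\ $q>3/\tilde\kappa$, which matches the stated condition. If Jain--Marcus is deemed inapplicable because of only second moments of $M$, we fall back on this route.

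\textbf{Part (b).} Write $\widehat Z_i=Z_i+R_i$, where $R_i(t)$ collects the terms from $\bar X-\E X$, $\bar Y-\E Y$ and $\widehat\alpha^\star-\alpha^\star$, all $O_P(n^{-1/2})$ in sup-norm. These are multiplied by polynomials in $\|X_i\|_\infty,\|Y_i\|_\infty$ whose empirical averages are $O_P(1)$ by moments. Then $\sup_{s,t}|n^{-1}\sum_i Z_i(s)Z_i(t)-\sigma_Z(s,t)|=o_P(1)$ by a uniform LLN in $C([0,1]^2)$, using the Hölder bound above and $\E M\,\|Z\|_\infty<\infty$ (a Glivenko--Cantelli argument via a finite grid plus equicontinuity). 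The cross terms $n^{-1}\sum_i Z_iR_i$ and $n^{-1}\sum_i R_iR_i$ vanish uniformly by Cauchy--Schwarz. Combining this with uniform consistency of $\widehat\sigma_X(t,t)$, bounded away from zero, yields the claim.
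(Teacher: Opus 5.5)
Your argument is correct, and its architecture matches the paper's. The pieces that coincide are the decomposition of $\sqrt n(\widehat\alpha^\star-\alpha^\star)$ into $n^{-1/2}\sum_i Z_i/\widehat\sigma_X(t,t)$ plus the mean-correction term $\sqrt n(\alpha^\star\Delta_X^2-\Delta_X\Delta_Y)/\widehat\sigma_X(t,t)$, and the finite-dimensional CLT plus tightness plus Slutsky in $C[0,1]$. Part (b) is also the same: a perturbation bound $\|\widehat Z_i-Z_i\|_\infty\lesssim M_iR_n$ with $R_n=O_P(n^{-1/2})$, plus a uniform LLN in $C([0,1]^2)$. The paper uses the Banach-space LLN where you use a grid-plus-equicontinuity argument; either works.

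You genuinely differ only in the tightness step. The paper proves $\E\|Z_i/\sigma_X\|_{0,\tilde\kappa}^{p}<\infty$ with $p=q/3$. It then uses Rosenthal to get $\E|G_n(t)-G_n(s)|^p\le C|t-s|^{p\tilde\kappa}$ uniformly in $n$, and concludes with Kolmogorov--Chentsov and a stochastic-equicontinuity criterion. This needs $p\ge 2$ and $p\tilde\kappa>1$, which is exactly where $q>\max\{6,3/\tilde\kappa\}$ comes from. So the paper's proof is your fallback route.

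Your primary Jain--Marcus route is valid, and your hedge about it is unnecessary. Jain--Marcus is precisely a second-moment theorem: it needs $\E M^2<\infty$ and a finite entropy integral. For $\rho(s,t)=|s-t|^{\tilde\kappa}$ the covering numbers are of order $\epsilon^{-1/\tilde\kappa}$, so the entropy integral is finite for every $\tilde\kappa\in(0,1]$.

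This route actually buys more than the paper's. The extra condition $q>3/\tilde\kappa$ becomes superfluous, and the moments $\E\|X\|_{d,\kappa}^6<\infty$, $\E\|Y\|_{d,\kappa}^3<\infty$ already in Assumption A.2 suffice. The same holds for the sup-norm rates of $\bar X-\E X$ and $\bar Y-\E Y$, which the paper again gets via Rosenthal and Kolmogorov--Chentsov. The price is importing a chaining-based CLT instead of an elementary moment criterion.

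One bookkeeping correction is needed. $Z$ is quadratic, not cubic, in the paths, so the right bound is $M\lesssim\|X^\mu\|_{0,\tilde\kappa}\|Y^\mu\|_{0,\tilde\kappa}+\|X^\mu\|_{0,\tilde\kappa}^2$, with constants depending on $\|\alpha^\star\|_{0,\tilde\kappa}$. Likewise $\E Z(t)^2\lesssim\E(\|X^\mu\|_\infty^2\|Y^\mu\|_\infty^2)+\E\|X^\mu\|_\infty^4$.

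Your stated bounds $M\lesssim\|X\|^2\|Y\|+\|X\|^3$ and $\E Z(t)^2\le\E(\|X\|_\infty^4\|Y\|_\infty^2)$ are not valid inequalities. Taken at face value, Hölder would only give $\E M^2<\infty$ for $q\ge 8$. With the quadratic bound you get $\E(\|X\|^2\|Y\|^2)\le(\E\|X\|^6)^{1/3}(\E\|Y\|^3)^{2/3}$. The factor $3$ in your requirement $q\ge 3r$ comes from this Hölder step with exponents $3$ and $3/2$, because $Y$ carries only $q/2$ moments, exactly as in the paper. It does not come from a third path factor.
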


\section{Simulations}\label{sec:simulations}

We generate random functions $(Y_i,X_i,\varepsilon_i)_{i=1}^n\overset{\operatorname{iid}}{\sim}(Y,X,\varepsilon)$,
$$
Y(t)=\alpha_0(t)+\alpha(t)X(t)+\int_0^t\beta(t,s) X(s) \dd s+\varepsilon(t), \quad t\in[0,1],
$$
evaluated at $p=51$ equidistant grid points $s_j=t_j=(j-1)/(p-1)$, $j=1,\dots,p$, to emulate fully observed functional data, where $X$ is a Gaussian process $X\sim\mathcal{GP}(\mu,\sigma_X)$ with mean function $\mu(t)=10 t^3 - 15 t^4 + 6 t^5$ and Mat\'ern covariance function $\sigma_X(s,t)\equiv C(s,t|\nu=3.5,\varsigma=1)$, and where $\varepsilon$ is a zero mean Gaussian process $\varepsilon\sim\mathcal{GP}(0,\sigma_\varepsilon)$ with Mat\'ern covariance function $\sigma_\varepsilon(s,t)\equiv C(s,t|\nu=2.5,\varsigma=0.1)$,
$$
C(s,t|\nu,\varsigma)=\mathbbm{1}_{(|t-s|=0)}\varsigma^2 + \mathbbm{1}_{(|t-s|>0)}\varsigma^2\big(2^{1-\nu}/\operatorname{Gam}(\nu)\big)\allowbreak \big(\sqrt{2\nu}|t-s|\big)^\nu K_\nu\big(\sqrt{2\nu}|t-s|\big),
$$
where $\operatorname{Gam}(\cdot)$ denotes the gamma function, $K_\nu$ is the modified Bessel function of the second kind, and where $\nu\geq 0$ controls the roughness of the sample paths ($\nu>1$ leads to continuously differentiable sample paths). We consider the following two data generating processes (DGP) and estimation procedures:
\begin{description}
	\item[DGP1] $\alpha_0(t)=2$, $\alpha(t)=10+5\,(10 t^3 - 15 t^4 + 6 t^5)$, $\beta(t,s)=\mathbbm{1}_{(s\leq t)}\dfrac{5 s}{t}$ for all $t\in[0,1]$
	\item[DGP2] $\alpha_0(t)=2$, $\alpha(t)=5+5\,(10 t^3 - 15 t^4 + 6 t^5)$, $\beta(t,s)=\mathbbm{1}_{(s\leq t)}\dfrac{\alpha(t) s}{t}$ for all $t\in[0,1]$
\end{description}
DGP2 is the substantially harder case since $\beta(t,s)\propto\alpha(t)$ renders the two effects maximally entangled. The parameter functions $\alpha(t)$ and $\beta(t,s)$ of DGP1 and DGP2 are shown in Figure \ref{fig:DGP1andDGP2}.

\begin{figure}[!tbp]

	\resizebox{\linewidth}{!}{\input{./Figures/Fig_DGP1_and_DGP2.tex}}
	\caption{Parameter functions of DGP1 and DGP2.}\label{fig:DGP1andDGP2}
\end{figure}

We consider the following three estimation procedures applied in a small sample size scenario ($n=50$) and a large sample size scenario ($n=500$):
\begin{description}
 \item[Historical] The smoothing splines approach of \cite{crambes2009} for estimating the historical functional linear regression model \eqref{eq:hist} of $Y(t_j)$ on $X(s)$, $s\leq t_j$ for each $t_j$.
 \item[Concurrent] Pointwise linear regression for estimating the concurrent functional linear regression model \eqref{eq:conc} of $Y(t_j)$ on $X(t_j)$  at each $t_j$.
 \item[Combined] Our combined concurrent and historical functional linear regression model \eqref{eq:combined} estimated using the estimation procedure described in Section \ref{sec:estimation}.
\end{description}

To evaluate the estimation procedures, we use the L2-distances between the estimates $\widehat\alpha$ and $\widehat\beta_t$ and the true concurrent and historical parameters,
$$
\|\widehat\alpha - \alpha\|^2\quad\text{and}\quad\frac{1}{|\mathcal{T}|}\sum_{t\in\mathcal{T}}\|\widehat\beta_t - \beta_t\|^2,
$$
with $\mathcal{T}=\{0.1,0.2,\dots,1\}$, and report their means and standard deviations over $1000$ Monte Carlo replications (Table~\ref{TAB:SIM_1}) for both DGPs and both sample size scenarios.

Table~\ref{TAB:SIM_1} confirms the omitted-variable-bias phenomena discussed in the introduction.
The historical model, which omits the concurrent component, yields errors $|\mathcal{T}|^{-1}\sum_{t\in\mathcal{T}}\|\widehat\beta_t - \beta_t\|^2$ on average of $1169$ and $378$ for DGP1 and DGP2 at $n=50$, with virtually no reduction at $n=500$, reflecting a persistent bias.
Similarly, the average estimation error $\|\widehat\alpha - \alpha\|^2$ of the concurrent model are at $2.0$ and $6.7$ across both sample sizes in DGP1 and DGP2.
The Combined model eliminates both biases, with estimation errors that shrink substantially from $n=50$ to $n=500$ for all parameters and both DGPs.

\begin{table}[!tbp]
\centering
\caption{Means (and standard deviations) of the L2-distances between the estimates and the true parameters over 1000 Monte Carlo replications.}
\label{TAB:SIM_1}
\begin{tabular}{rl S[table-format=4.2]@{\,}l S[table-format=4.2]@{\,}l c S[table-format=4.2]@{\,}l S[table-format=4.2]@{\,}l}
\toprule
&&\multicolumn{4}{c}{$\|\widehat{\alpha}-\alpha\|^2$}&&
 \multicolumn{4}{c}{$|\mathcal{T}|^{-1}\sum_{t\in\mathcal{T}}\|\widehat{\beta}_t-\beta_t\|^2$}\\
\cmidrule(lr){3-6}\cmidrule(lr){8-11}
&&\multicolumn{2}{c}{$n=50$}& \multicolumn{2}{c}{$n=500$}&& \multicolumn{2}{c}{$n=50$}& \multicolumn{2}{c}{$n=500$}\\
\midrule
DGP1
&  Historical  & \multicolumn{2}{c}{---} & \multicolumn{2}{c}{---}  && 1111.29 &(35.92) & 1145.26 &(11.25)\\
&  Concurrent  & 2.02 &(0.15)& 2.01 &(0.05)&& \multicolumn{2}{c}{---} & \multicolumn{2}{c}{---}\\
&  Combined    & \bfseries 0.07 &(0.05)& \bfseries 0.02 &(0.01)&& \bfseries 0.85 &(0.52) &\bfseries 0.40 &(0.19)\\
\midrule
DGP2
&  Historical  & \multicolumn{2}{c}{---} & \multicolumn{2}{c}{---}  && 362.37 &(12.41) & 390.86 &(3.86)\\
&  Concurrent  & 6.71 &(0.53)& 6.70 &(0.16)&& \multicolumn{2}{c}{---} & \multicolumn{2}{c}{---}\\
&  Combined    & \bfseries 0.08 &(0.07)& \bfseries 0.03 &(0.02)&& \bfseries 0.98 &(0.86) &\bfseries 0.43 &(0.23)\\
\bottomrule
\end{tabular}

\end{table}

Theorem \ref{thm:alphaStar} implies that the concurrent-only estimator $\widehat{\alpha}^\star(t)$ is asymptotically normal in a Gaussian process sense, and hence can be used for inference on $\alpha^\star(t)$ using the following simultaneous confidence band based on the estimated covariance function $\widehat{\sigma}_{\widehat{\alpha}^\star}(s,t)$:
\begin{equation}\label{eq:SCB}
\operatorname{SCB}^{1-\gamma}_{\alpha^\star}(t)
=
\left[
\widehat{\alpha}^\star(t)
\pm
\frac{1}{\sqrt{n-2}}\widehat q_{1-\gamma}
\right],
\qquad t\in[0,1],
\end{equation}
where $\widehat q_{1-\gamma}$ denotes the $(1-\gamma)$-quantile of
$\sup_{t\in[0,1]}|\mathcal Z(t)|$, where
$\mathcal Z\sim\mathcal{GP}(0,\widehat\sigma_{\widehat{\alpha}^\star})$
and $\widehat\sigma_{\widehat{\alpha}^\star}$ is defined in
Theorem~\ref{thm:alphaStar}.

There are no closed form expressions for supremum quantiles of Gaussian processes, but they can be approximated using Monte Carlo methods or Kac-Rice formula-based approximations. In the following we use the Kac-Rice formula-based approximation of \cite{LiRe2023} as implemented in the \textsf{R}-package \texttt{ffscb} \citep{ffscb_Rpgk}, which is computationally efficient and provides accurate quantile approximations. We evaluate the finite-sample performance of the confidence band \eqref{eq:SCB} in terms of its coverage probability for
\begin{equation*}
\alpha^\star(t)=\alpha(t)+\int_0^t\beta(t,s)\frac{\sigma_X(s,t)}{\sigma_X(t,t)}\dd s
\end{equation*}
over $10,000$ simulations
\begin{equation*}
	\frac{1}{10,000}\sum_{r = 1}^{10,000}\mathbbm{1}_{\left(\alpha^\star(t) \,\in\, \operatorname{SCB}^{1-\gamma}_{\alpha^\star,r}(t) \;\text{for all} \; t\in[0,1]\right)},
\end{equation*}
where $\operatorname{SCB}^{1-\gamma}_{\alpha^\star,r}(t)$ is the confidence band \eqref{eq:SCB} computed in the $r$-th simulation.

Table \ref{tab:coverage} reports the coverage probabilities for different values of $\gamma\in\{0.05, 0.10\}$. While the confidence band is slightly optimistic in the small sample size scenario ($n=50$), it achieves coverage probabilities effectively equal to the nominal level in the large sample size scenario ($n=500$) for both DGPs and both values of $\gamma$.

\begin{table}[!tbp]
\centering
\caption{Coverage probabilities for $\operatorname{SCB}^{1-\gamma}_{\alpha^\star}$.}
\label{tab:coverage}
\begin{tabular}{r S[table-format=1.2]@{\,} S[table-format=1.2]@{\,} c S[table-format=1.2]@{\,} S[table-format=1.2]@{\,}}
\toprule
Nominal&\multicolumn{2}{c}{$1-\gamma=0.90$}&&\multicolumn{2}{c}{$1-\gamma=0.95$}\\
\cmidrule(lr){2-3}\cmidrule(lr){5-6}
&\multicolumn{1}{c}{$n=50$}& \multicolumn{1}{c}{$n=500$}&& \multicolumn{1}{c}{$n=50$}& \multicolumn{1}{c}{$n=500$}\\
\midrule
DGP1 & 0.85 & 0.89 && 0.91 & 0.95\\
DGP2 & 0.84 & 0.90 && 0.90 & 0.95\\
\bottomrule
\end{tabular}

\end{table}

\section{Applications}\label{sec:applications}

Both the concurrent functional linear regression model \eqref{eq:conc} and the historical functional linear regression model \eqref{eq:hist} are widely used in applications. For example, \cite{Abdoli_et_al_2008} use the concurrent model to estimate the time-varying effect of an on-body lift assistive device, \cite{Chang_et_al_2023} study the concurrent effect of income trajectories on intergenerational mobility, and \cite{Petrovich_et_al_2023} estimate time-varying labor supply elasticities. For the historical model, \cite{meyer2021bayesian} develop a Bayesian approach to estimate the effect of past air pollution exposure on current heart rate variability, while \cite{janssen2025learning} apply a modified historical model to hydrological data.

However, in either specification, failure to account for an existing historical or concurrent effect can induce substantial omitted-variable bias. In Section \ref{sec:appl1}, we consider an application in which the purely concurrent model is supported by the combined model, whereas the purely historical model estimates a strongly nonzero coefficient surface that largely absorbs the omitted concurrent effect. In Section \ref{sec:appl2}, we analyze a setting in which both concurrent and historical effects are present. In this case, the combined model successfully disentangles the two effects, while the concurrent and historical models produce markedly biased estimates. We also use the two case studies to illustrate the use of the confidence band \eqref{eq:SCB} for $\alpha^\star(t)$ for testing hypotheses about $\alpha(t)$ using relevance testing.

\subsection{Gait Data: Angles}\label{sec:appl1}

We revisit the well-known gait dataset (Figure \ref{fig:ApplGait}) of \cite{Olshen_et_al_1989}, since this dataset is one of the classic datasets for demonstrating the practical use of the functional concurrent linear regression model \citep{RHG_2009_book, Manrique_et_al_2018}. The data consist of hip and knee angle curves of $n=39$ children during one standardized gait cycle. We model the hip angle $Y(t)$ as the response and the knee angle $X(t)$ as its concurrent predictor.

\begin{figure}[!tbp]
		\resizebox{\linewidth}{!}{\input{./Figures/Fig_Appl_Data_1.tex}}
		\caption{Hip and knee angles of $n=39$ children during one standardized gait cycle.}\label{fig:ApplGait}
\end{figure}

Figure~\ref{fig:applConc} compares the concurrent effect estimates of the combined model \eqref{eq:combined} and the classical concurrent model \eqref{eq:conc}. Both models yield effectively identical estimates of $\widehat\alpha(t)$, indicating that there is effectively no omitted-variable bias in the concurrent model which omits the historical component.

Figure~\ref{fig:applHist} compares the historical effect estimates of the combined model \ref{eq:combined} and the classical (pure) historical model \eqref{eq:hist}. While the combined model yields effectively zero estimates of $\widehat\beta_t(s)$ throughout, the historical model produces strongly non-zero estimates of $\widehat\beta_t(s)$, which shows the bias due to the omitted concurrent effect in the historical model. This illustrates that the pure historical model is misspecified and yields a biased estimate of the historical effect, whereas the combined model correctly identifies the absence of a historical effect and yields an estimate that is consistent with the expected biomechanical structure of the data.

\begin{figure}[!tbp]
    \centering
    \begin{minipage}{0.46\linewidth}
        \centering
        \resizebox{\linewidth}{!}{\begin{tikzpicture}[x=1pt,y=1pt]
\definecolor{fillColor}{RGB}{255,255,255}
\path[use as bounding box,fill=fillColor,fill opacity=0.00] (0,0) rectangle (361.35,361.35);
\begin{scope}
\path[clip] ( 49.20, 49.20) rectangle (360.15,330.15);
\definecolor{drawColor}{RGB}{0,0,0}

\path[draw=drawColor,line width= 0.4pt,line join=round,line cap=round] ( 66.47,316.28) --
	( 69.35,307.85) --
	( 72.23,295.04) --
	( 75.11,280.69) --
	( 77.99,267.03) --
	( 80.87,255.55) --
	( 83.75,247.14) --
	( 86.63,242.15) --
	( 89.51,240.43) --
	( 92.39,241.37) --
	( 95.27,244.05) --
	( 98.15,247.40) --
	(101.02,250.49) --
	(103.90,252.62) --
	(106.78,253.42) --
	(109.66,252.77) --
	(112.54,250.71) --
	(115.42,247.33) --
	(118.30,242.74) --
	(121.18,237.04) --
	(124.06,230.34) --
	(126.94,222.69) --
	(129.82,214.09) --
	(132.70,204.57) --
	(135.57,194.29) --
	(138.45,183.64) --
	(141.33,173.31) --
	(144.21,164.11) --
	(147.09,156.76) --
	(149.97,151.63) --
	(152.85,148.72) --
	(155.73,147.70) --
	(158.61,148.06) --
	(161.49,149.31) --
	(164.37,151.02) --
	(167.25,152.88) --
	(170.12,154.67) --
	(173.00,156.10) --
	(175.88,156.75) --
	(178.76,155.97) --
	(181.64,152.96) --
	(184.52,146.87) --
	(187.40,137.19) --
	(190.28,124.07) --
	(193.16,108.54) --
	(196.04, 92.47) --
	(198.92, 78.04) --
	(201.80, 67.18) --
	(204.67, 60.99) --
	(207.55, 59.61) --
	(210.43, 62.36) --
	(213.31, 68.18) --
	(216.19, 75.98) --
	(219.07, 84.91) --
	(221.95, 94.41) --
	(224.83,104.23) --
	(227.71,114.25) --
	(230.59,124.50) --
	(233.47,135.08) --
	(236.35,146.15) --
	(239.22,157.95) --
	(242.10,170.73) --
	(244.98,184.68) --
	(247.86,199.68) --
	(250.74,215.17) --
	(253.62,229.92) --
	(256.50,242.08) --
	(259.38,249.38) --
	(262.26,249.78) --
	(265.14,242.42) --
	(268.02,228.32) --
	(270.90,209.98) --
	(273.77,190.18) --
	(276.65,170.89) --
	(279.53,153.02) --
	(282.41,136.81) --
	(285.29,122.38) --
	(288.17,109.98) --
	(291.05, 99.94) --
	(293.93, 92.40) --
	(296.81, 87.26) --
	(299.69, 84.33) --
	(302.57, 83.44) --
	(305.45, 84.50) --
	(308.32, 87.48) --
	(311.20, 92.38) --
	(314.08, 99.25) --
	(316.96,108.17) --
	(319.84,119.36) --
	(322.72,133.20) --
	(325.60,150.21) --
	(328.48,170.87) --
	(331.36,195.18) --
	(334.24,221.90) --
	(337.12,248.20) --
	(340.00,270.80) --
	(342.87,288.32) --
	(345.75,301.88) --
	(348.63,312.31);
\end{scope}
\begin{scope}
\path[clip] (  0.00,  0.00) rectangle (361.35,361.35);
\definecolor{drawColor}{RGB}{0,0,0}

\path[draw=drawColor,line width= 0.4pt,line join=round,line cap=round] ( 60.72, 49.20) -- (348.63, 49.20);

\path[draw=drawColor,line width= 0.4pt,line join=round,line cap=round] ( 60.72, 49.20) -- ( 60.72, 43.20);

\path[draw=drawColor,line width= 0.4pt,line join=round,line cap=round] (118.30, 49.20) -- (118.30, 43.20);

\path[draw=drawColor,line width= 0.4pt,line join=round,line cap=round] (175.88, 49.20) -- (175.88, 43.20);

\path[draw=drawColor,line width= 0.4pt,line join=round,line cap=round] (233.47, 49.20) -- (233.47, 43.20);

\path[draw=drawColor,line width= 0.4pt,line join=round,line cap=round] (291.05, 49.20) -- (291.05, 43.20);

\path[draw=drawColor,line width= 0.4pt,line join=round,line cap=round] (348.63, 49.20) -- (348.63, 43.20);

\node[text=drawColor,anchor=base,inner sep=0pt, outer sep=0pt, scale=  1.69] at ( 60.72, 27.60) {0.0};

\node[text=drawColor,anchor=base,inner sep=0pt, outer sep=0pt, scale=  1.69] at (118.30, 27.60) {0.2};

\node[text=drawColor,anchor=base,inner sep=0pt, outer sep=0pt, scale=  1.69] at (175.88, 27.60) {0.4};

\node[text=drawColor,anchor=base,inner sep=0pt, outer sep=0pt, scale=  1.69] at (233.47, 27.60) {0.6};

\node[text=drawColor,anchor=base,inner sep=0pt, outer sep=0pt, scale=  1.69] at (291.05, 27.60) {0.8};

\node[text=drawColor,anchor=base,inner sep=0pt, outer sep=0pt, scale=  1.69] at (348.63, 27.60) {1.0};

\path[draw=drawColor,line width= 0.4pt,line join=round,line cap=round] ( 49.20, 77.02) -- ( 49.20,319.74);

\path[draw=drawColor,line width= 0.4pt,line join=round,line cap=round] ( 49.20, 77.02) -- ( 43.20, 77.02);

\path[draw=drawColor,line width= 0.4pt,line join=round,line cap=round] ( 49.20,117.48) -- ( 43.20,117.48);

\path[draw=drawColor,line width= 0.4pt,line join=round,line cap=round] ( 49.20,157.93) -- ( 43.20,157.93);

\path[draw=drawColor,line width= 0.4pt,line join=round,line cap=round] ( 49.20,198.38) -- ( 43.20,198.38);

\path[draw=drawColor,line width= 0.4pt,line join=round,line cap=round] ( 49.20,238.84) -- ( 43.20,238.84);

\path[draw=drawColor,line width= 0.4pt,line join=round,line cap=round] ( 49.20,279.29) -- ( 43.20,279.29);

\path[draw=drawColor,line width= 0.4pt,line join=round,line cap=round] ( 49.20,319.74) -- ( 43.20,319.74);

\node[text=drawColor,rotate= 90.00,anchor=base,inner sep=0pt, outer sep=0pt, scale=  1.69] at ( 34.80, 77.02) {0.3};

\node[text=drawColor,rotate= 90.00,anchor=base,inner sep=0pt, outer sep=0pt, scale=  1.69] at ( 34.80,117.48) {0.4};

\node[text=drawColor,rotate= 90.00,anchor=base,inner sep=0pt, outer sep=0pt, scale=  1.69] at ( 34.80,157.93) {0.5};

\node[text=drawColor,rotate= 90.00,anchor=base,inner sep=0pt, outer sep=0pt, scale=  1.69] at ( 34.80,198.38) {0.6};

\node[text=drawColor,rotate= 90.00,anchor=base,inner sep=0pt, outer sep=0pt, scale=  1.69] at ( 34.80,238.84) {0.7};

\node[text=drawColor,rotate= 90.00,anchor=base,inner sep=0pt, outer sep=0pt, scale=  1.69] at ( 34.80,279.29) {0.8};

\node[text=drawColor,rotate= 90.00,anchor=base,inner sep=0pt, outer sep=0pt, scale=  1.69] at ( 34.80,319.74) {0.9};

\path[draw=drawColor,line width= 0.4pt,line join=round,line cap=round] ( 49.20, 49.20) --
	(360.15, 49.20) --
	(360.15,330.15) --
	( 49.20,330.15) --
	cycle;
\end{scope}
\begin{scope}
\path[clip] (  0.00,  0.00) rectangle (361.35,361.35);
\definecolor{drawColor}{RGB}{0,0,0}

\node[text=drawColor,anchor=base,inner sep=0pt, outer sep=0pt, scale=  1.69] at (204.67,339.93) {Concurrent Effects};

\node[text=drawColor,anchor=base,inner sep=0pt, outer sep=0pt, scale=  1.69] at (204.67,  3.60) {Proportion of Gait Cycle};
\end{scope}
\begin{scope}
\path[clip] ( 49.20, 49.20) rectangle (360.15,330.15);
\definecolor{drawColor}{RGB}{0,0,0}

\path[draw=drawColor,line width= 0.6pt,dash pattern=on 1pt off 3pt ,line join=round,line cap=round] ( 60.72,310.72) --
	( 63.60,315.32) --
	( 66.47,313.89) --
	( 69.35,306.67) --
	( 72.23,295.59) --
	( 75.11,283.05) --
	( 77.99,270.97) --
	( 80.87,260.64) --
	( 83.75,252.82) --
	( 86.63,247.83) --
	( 89.51,245.58) --
	( 92.39,245.63) --
	( 95.27,247.23) --
	( 98.15,249.54) --
	(101.02,251.74) --
	(103.90,253.22) --
	(106.78,253.62) --
	(109.66,252.80) --
	(112.54,250.77) --
	(115.42,247.64) --
	(118.30,243.58) --
	(121.18,238.76) --
	(124.06,233.34) --
	(126.94,227.42) --
	(129.82,221.04) --
	(132.70,214.18) --
	(135.57,206.86) --
	(138.45,199.23) --
	(141.33,191.63) --
	(144.21,184.51) --
	(147.09,178.35) --
	(149.97,173.45) --
	(152.85,169.89) --
	(155.73,167.52) --
	(158.61,166.08) --
	(161.49,165.29) --
	(164.37,164.95) --
	(167.25,164.89) --
	(170.12,164.99) --
	(173.00,165.04) --
	(175.88,164.66) --
	(178.76,163.24) --
	(181.64,159.94) --
	(184.52,153.91) --
	(187.40,144.52) --
	(190.28,131.79) --
	(193.16,116.56) --
	(196.04,100.52) --
	(198.92, 85.79) --
	(201.80, 74.32) --
	(204.67, 67.33) --
	(207.55, 65.11) --
	(210.43, 67.11) --
	(213.31, 72.33) --
	(216.19, 79.66) --
	(219.07, 88.21) --
	(221.95, 97.43) --
	(224.83,106.99) --
	(227.71,116.77) --
	(230.59,126.80) --
	(233.47,137.15) --
	(236.35,147.99) --
	(239.22,159.57) --
	(242.10,172.17) --
	(244.98,186.01) --
	(247.86,201.07) --
	(250.74,216.91) --
	(253.62,232.45) --
	(256.50,245.96) --
	(259.38,255.19) --
	(262.26,257.94) --
	(265.14,252.97) --
	(268.02,240.84) --
	(270.90,223.74) --
	(273.77,204.37) --
	(276.65,184.79) --
	(279.53,166.07) --
	(282.41,148.61) --
	(285.29,132.72) --
	(288.17,118.89) --
	(291.05,107.63) --
	(293.93, 99.16) --
	(296.81, 93.37) --
	(299.69, 89.98) --
	(302.57, 88.79) --
	(305.45, 89.64) --
	(308.32, 92.49) --
	(311.20, 97.31) --
	(314.08,104.14) --
	(316.96,113.04) --
	(319.84,124.22) --
	(322.72,138.02) --
	(325.60,154.92) --
	(328.48,175.35) --
	(331.36,199.21) --
	(334.24,225.27) --
	(337.12,250.76) --
	(340.00,272.52) --
	(342.87,289.16) --
	(345.75,301.62) --
	(348.63,310.72);

\path[draw=drawColor,line width= 0.4pt,line join=round,line cap=round] (108.88,312.15) -- (130.48,312.15);

\path[draw=drawColor,line width= 0.4pt,dash pattern=on 1pt off 3pt ,line join=round,line cap=round] (108.88,290.55) -- (130.48,290.55);

\node[text=drawColor,anchor=base west,inner sep=0pt, outer sep=0pt, scale=  1.56] at (141.28,306.78) {$\,\widehat{\alpha}(t)\,$ Combined Model};

\node[text=drawColor,anchor=base west,inner sep=0pt, outer sep=0pt, scale=  1.56] at (141.28,285.18) {$\widehat{\alpha}^{\star}(t)$ Concurrent Model};
\end{scope}
\end{tikzpicture}}
		\caption{Concurrent effect estimates of our combined (concurrent and historical) regression model \eqref{eq:combined} and of the classical concurrent model \eqref{eq:conc}.}
        \label{fig:applConc}
    \end{minipage}
    \hfill
    \begin{minipage}{0.46\linewidth}
        \centering
		\resizebox{\linewidth}{!}{\input{./Figures/Fig_Appl_Beta_1.tex}}
        \caption{Historical effect estimates $\widehat{\beta}_t(s)$, $s\leq t$, with $t=0.1, 0.2,\dots,1$, for the combined regression model \eqref{eq:combined} and the historical model \eqref{eq:hist}.}
        \label{fig:applHist}
    \end{minipage}
\end{figure}

In settings where the historical component vanishes, i.e., $\beta_t(s)=0$ for all $s,t\in[0,1]$, we have $\alpha^\star(t)=\alpha(t)$ and $\widehat{\alpha}^\star(t)=\widehat{\alpha}(t)$ for all $t\in[0,1]$. Thus, in absence of a historical effect, the asymptotic distribution of $\widehat{\alpha}^{\star}(t)$ established in Theorem~\ref{thm:alphaStar} can be used to conduct inference on $\alpha(t)$, which is otherwise generally infeasible. In particular, under the absence of a historical effects, the intrinsically untestable null-hypothesis
\begin{equation}\label{eq:nullhyp_alpha_1}
H_0\colon \alpha(t) = 0 \quad\text{for all}\quad t\in[0,1]
\end{equation}
becomes equivalent to the testable null-hypothesis
\begin{equation}\label{eq:nullhyp_alphaStar_1}
H_0\colon \alpha^\star(t) = 0 \quad\text{for all}\quad t\in[0,1].
\end{equation}

Figure~\ref{fig:ConfBand_1} shows the simultaneous 95 \% confidence band \eqref{eq:SCB} for $\alpha^\star(t)$, which can be used to test the null-hypothesis of no concurrent effect \eqref{eq:nullhyp_alphaStar_1}. The confidence band does not contain zero for any $t\in[0,0.29]\cup[0.57,0.73]\cup[0.9,1.0]$, providing strong evidence for non-zero concurrent effects of the knee angle on the hip angle throughout the gait cycle, which allows rejecting the null-hypothesis \eqref{eq:nullhyp_alphaStar_1}. Under the here plausible assumption of no historical effect, this also allows rejecting the null-hypothesis \eqref{eq:nullhyp_alpha_1}.

\begin{figure}[!tbp]
	\centering
	\resizebox{0.46\linewidth}{!}{\input{./Figures/Fig_Appl_ConfidenceBand_1.tex}}
	\caption{Application of Section \ref{sec:appl1}: Simultaneous 95 \% confidence band (SCB) for the concurrent effect $\alpha^{\star}(t)$.}
	\label{fig:ConfBand_1}
\end{figure}

\subsection{Gait Data: Torques}\label{sec:appl2}

We analyze joint torque data, comprising ankle and knee torque curves of $n=120$ amateur runners during a standardized gait cycle (Figure~\ref{fig:ApplGait_2}).
We model the knee torque $Y(t)$ as the response and the ankle torque $X(t)$ as the predictor; further details on the data can be found in \cite{LWHB2014}. Within the gait cycle, foot and ankle motion and torque temporally precede knee motion and torque, providing a biomechanical rationale for a genuine historical effect of $X(s)$, $s < t$, on $Y(t)$, in addition to a concurrent effect.

\begin{figure}[!tbp]
		\resizebox{\linewidth}{!}{\input{./Figures/Fig_Appl_Data_2.tex}}
		\caption{Ankle and knee joint torque curves of $n=120$ amateur runners.}\label{fig:ApplGait_2}
\end{figure}

Figures~\ref{fig:applConc_2} and~\ref{fig:applHist_2} illustrate the contrasting case where both a concurrent and a historical effect are present.
The concurrent effect estimates $\widehat\alpha(t)$ of the combined model and the classical concurrent model differ substantially (Figure~\ref{fig:applConc_2}), since the concurrent-only model conflates the historical contribution with the concurrent effect.
The historical surface $\widehat\beta_t(s)$ estimated by the plain historical model is likewise distorted, as it attempts to absorb the concurrent effect on top of the genuine historical contribution (Figure~\ref{fig:applHist_2}).
The combined model disentangles both effects, yielding estimates of $\widehat\alpha(t)$ and $\widehat\beta_t(s)$ that are consistent with the expected biomechanical structure.

\begin{figure}[!tbp]
    \centering
    \begin{minipage}{0.46\linewidth}
        \centering
        \resizebox{\linewidth}{!}{\begin{tikzpicture}[x=1pt,y=1pt]
\definecolor{fillColor}{RGB}{255,255,255}
\path[use as bounding box,fill=fillColor,fill opacity=0.00] (0,0) rectangle (361.35,361.35);
\begin{scope}
\path[clip] ( 49.20, 49.20) rectangle (360.15,330.15);
\definecolor{drawColor}{RGB}{0,0,0}

\path[draw=drawColor,line width= 0.4pt,line join=round,line cap=round] ( 66.47,221.07) --
	( 69.35,196.39) --
	( 72.23,182.90) --
	( 75.11,174.53) --
	( 77.99,169.05) --
	( 80.87,165.33) --
	( 83.75,162.56) --
	( 86.63,160.05) --
	( 89.51,157.19) --
	( 92.39,153.55) --
	( 95.27,149.06) --
	( 98.15,143.98) --
	(101.02,138.77) --
	(103.90,133.87) --
	(106.78,129.60) --
	(109.66,126.10) --
	(112.54,123.43) --
	(115.42,121.59) --
	(118.30,120.57) --
	(121.18,120.32) --
	(124.06,120.73) --
	(126.94,121.68) --
	(129.82,123.03) --
	(132.70,124.66) --
	(135.57,126.46) --
	(138.45,128.38) --
	(141.33,130.36) --
	(144.21,132.38) --
	(147.09,134.40) --
	(149.97,136.37) --
	(152.85,138.26) --
	(155.73,140.00) --
	(158.61,141.51) --
	(161.49,142.71) --
	(164.37,143.50) --
	(167.25,143.81) --
	(170.12,143.55) --
	(173.00,142.68) --
	(175.88,141.16) --
	(178.76,138.97) --
	(181.64,136.09) --
	(184.52,132.54) --
	(187.40,128.32) --
	(190.28,123.48) --
	(193.16,118.06) --
	(196.04,112.15) --
	(198.92,105.85) --
	(201.80, 99.27) --
	(204.67, 92.60) --
	(207.55, 86.01) --
	(210.43, 79.73) --
	(213.31, 73.97) --
	(216.19, 68.96) --
	(219.07, 64.89) --
	(221.95, 61.91) --
	(224.83, 60.13) --
	(227.71, 59.61) --
	(230.59, 60.31) --
	(233.47, 62.16) --
	(236.35, 65.05) --
	(239.22, 68.81) --
	(242.10, 73.27) --
	(244.98, 78.24) --
	(247.86, 83.58) --
	(250.74, 89.14) --
	(253.62, 94.81) --
	(256.50,100.53) --
	(259.38,106.26) --
	(262.26,111.97) --
	(265.14,117.68) --
	(268.02,123.38) --
	(270.90,129.09) --
	(273.77,134.85) --
	(276.65,140.66) --
	(279.53,146.53) --
	(282.41,152.48) --
	(285.29,158.51) --
	(288.17,164.61) --
	(291.05,170.77) --
	(293.93,176.97) --
	(296.81,183.21) --
	(299.69,189.46) --
	(302.57,195.71) --
	(305.45,201.93) --
	(308.32,208.11) --
	(311.20,214.28) --
	(314.08,220.44) --
	(316.96,226.64) --
	(319.84,232.95) --
	(322.72,239.42) --
	(325.60,246.16) --
	(328.48,253.25) --
	(331.36,260.75) --
	(334.24,268.65) --
	(337.12,276.79) --
	(340.00,284.79) --
	(342.87,291.90) --
	(345.75,296.93) --
	(348.63,298.38);
\end{scope}
\begin{scope}
\path[clip] (  0.00,  0.00) rectangle (361.35,361.35);
\definecolor{drawColor}{RGB}{0,0,0}

\path[draw=drawColor,line width= 0.4pt,line join=round,line cap=round] ( 60.72, 49.20) -- (348.63, 49.20);

\path[draw=drawColor,line width= 0.4pt,line join=round,line cap=round] ( 60.72, 49.20) -- ( 60.72, 43.20);

\path[draw=drawColor,line width= 0.4pt,line join=round,line cap=round] (118.30, 49.20) -- (118.30, 43.20);

\path[draw=drawColor,line width= 0.4pt,line join=round,line cap=round] (175.88, 49.20) -- (175.88, 43.20);

\path[draw=drawColor,line width= 0.4pt,line join=round,line cap=round] (233.47, 49.20) -- (233.47, 43.20);

\path[draw=drawColor,line width= 0.4pt,line join=round,line cap=round] (291.05, 49.20) -- (291.05, 43.20);

\path[draw=drawColor,line width= 0.4pt,line join=round,line cap=round] (348.63, 49.20) -- (348.63, 43.20);

\node[text=drawColor,anchor=base,inner sep=0pt, outer sep=0pt, scale=  1.69] at ( 60.72, 27.60) {0.0};

\node[text=drawColor,anchor=base,inner sep=0pt, outer sep=0pt, scale=  1.69] at (118.30, 27.60) {0.2};

\node[text=drawColor,anchor=base,inner sep=0pt, outer sep=0pt, scale=  1.69] at (175.88, 27.60) {0.4};

\node[text=drawColor,anchor=base,inner sep=0pt, outer sep=0pt, scale=  1.69] at (233.47, 27.60) {0.6};

\node[text=drawColor,anchor=base,inner sep=0pt, outer sep=0pt, scale=  1.69] at (291.05, 27.60) {0.8};

\node[text=drawColor,anchor=base,inner sep=0pt, outer sep=0pt, scale=  1.69] at (348.63, 27.60) {1.0};

\path[draw=drawColor,line width= 0.4pt,line join=round,line cap=round] ( 49.20,116.92) -- ( 49.20,323.06);

\path[draw=drawColor,line width= 0.4pt,line join=round,line cap=round] ( 49.20,116.92) -- ( 43.20,116.92);

\path[draw=drawColor,line width= 0.4pt,line join=round,line cap=round] ( 49.20,185.63) -- ( 43.20,185.63);

\path[draw=drawColor,line width= 0.4pt,line join=round,line cap=round] ( 49.20,254.35) -- ( 43.20,254.35);

\path[draw=drawColor,line width= 0.4pt,line join=round,line cap=round] ( 49.20,323.06) -- ( 43.20,323.06);

\node[text=drawColor,rotate= 90.00,anchor=base,inner sep=0pt, outer sep=0pt, scale=  1.69] at ( 34.80,116.92) {-1};

\node[text=drawColor,rotate= 90.00,anchor=base,inner sep=0pt, outer sep=0pt, scale=  1.69] at ( 34.80,185.63) {0};

\node[text=drawColor,rotate= 90.00,anchor=base,inner sep=0pt, outer sep=0pt, scale=  1.69] at ( 34.80,254.35) {1};

\node[text=drawColor,rotate= 90.00,anchor=base,inner sep=0pt, outer sep=0pt, scale=  1.69] at ( 34.80,323.06) {2};

\path[draw=drawColor,line width= 0.4pt,line join=round,line cap=round] ( 49.20, 49.20) --
	(360.15, 49.20) --
	(360.15,330.15) --
	( 49.20,330.15) --
	cycle;
\end{scope}
\begin{scope}
\path[clip] (  0.00,  0.00) rectangle (361.35,361.35);
\definecolor{drawColor}{RGB}{0,0,0}

\node[text=drawColor,anchor=base,inner sep=0pt, outer sep=0pt, scale=  1.69] at (204.67,339.93) {Concurrent Effect};

\node[text=drawColor,anchor=base,inner sep=0pt, outer sep=0pt, scale=  1.69] at (204.67,  3.60) {Proportion of Gait Cycle};
\end{scope}
\begin{scope}
\path[clip] ( 49.20, 49.20) rectangle (360.15,330.15);
\definecolor{drawColor}{RGB}{0,0,0}

\path[draw=drawColor,line width= 0.6pt,dash pattern=on 1pt off 3pt ,line join=round,line cap=round] ( 60.72,189.25) --
	( 63.60,223.56) --
	( 66.47,219.07) --
	( 69.35,204.88) --
	( 72.23,194.58) --
	( 75.11,188.04) --
	( 77.99,184.06) --
	( 80.87,181.78) --
	( 83.75,180.69) --
	( 86.63,180.37) --
	( 89.51,180.49) --
	( 92.39,180.79) --
	( 95.27,181.08) --
	( 98.15,181.24) --
	(101.02,181.24) --
	(103.90,181.07) --
	(106.78,180.80) --
	(109.66,180.48) --
	(112.54,180.19) --
	(115.42,179.99) --
	(118.30,179.93) --
	(121.18,180.02) --
	(124.06,180.28) --
	(126.94,180.68) --
	(129.82,181.20) --
	(132.70,181.81) --
	(135.57,182.48) --
	(138.45,183.18) --
	(141.33,183.89) --
	(144.21,184.58) --
	(147.09,185.25) --
	(149.97,185.88) --
	(152.85,186.46) --
	(155.73,186.99) --
	(158.61,187.47) --
	(161.49,187.89) --
	(164.37,188.26) --
	(167.25,188.57) --
	(170.12,188.83) --
	(173.00,189.02) --
	(175.88,189.16) --
	(178.76,189.23) --
	(181.64,189.22) --
	(184.52,189.13) --
	(187.40,188.94) --
	(190.28,188.67) --
	(193.16,188.28) --
	(196.04,187.80) --
	(198.92,187.20) --
	(201.80,186.50) --
	(204.67,185.69) --
	(207.55,184.78) --
	(210.43,183.76) --
	(213.31,182.65) --
	(216.19,181.45) --
	(219.07,180.17) --
	(221.95,178.84) --
	(224.83,177.47) --
	(227.71,176.08) --
	(230.59,174.70) --
	(233.47,173.34) --
	(236.35,172.04) --
	(239.22,170.81) --
	(242.10,169.67) --
	(244.98,168.65) --
	(247.86,167.75) --
	(250.74,167.01) --
	(253.62,166.45) --
	(256.50,166.07) --
	(259.38,165.92) --
	(262.26,166.00) --
	(265.14,166.36) --
	(268.02,167.00) --
	(270.90,167.97) --
	(273.77,169.27) --
	(276.65,170.92) --
	(279.53,172.95) --
	(282.41,175.34) --
	(285.29,178.12) --
	(288.17,181.25) --
	(291.05,184.74) --
	(293.93,188.56) --
	(296.81,192.69) --
	(299.69,197.11) --
	(302.57,201.80) --
	(305.45,206.73) --
	(308.32,211.91) --
	(311.20,217.32) --
	(314.08,222.98) --
	(316.96,228.94) --
	(319.84,235.25) --
	(322.72,242.01) --
	(325.60,249.35) --
	(328.48,257.44) --
	(331.36,266.45) --
	(334.24,276.54) --
	(337.12,287.74) --
	(340.00,299.72) --
	(342.87,311.30) --
	(345.75,319.65) --
	(348.63,319.74);

\path[draw=drawColor,line width= 0.4pt,line join=round,line cap=round] (108.88,312.15) -- (130.48,312.15);

\path[draw=drawColor,line width= 0.4pt,dash pattern=on 1pt off 3pt ,line join=round,line cap=round] (108.88,290.55) -- (130.48,290.55);

\node[text=drawColor,anchor=base west,inner sep=0pt, outer sep=0pt, scale=  1.56] at (141.28,306.78) {$\,\widehat{\alpha}(t)\,$ Combined Model};

\node[text=drawColor,anchor=base west,inner sep=0pt, outer sep=0pt, scale=  1.56] at (141.28,285.18) {$\widehat{\alpha}^{\star}(t)$ Concurrent Model};
\end{scope}
\end{tikzpicture}}
        \caption{Concurrent effect estimates of our combined (concurrent and historical) regression model \eqref{eq:combined} and of the classical concurrent model \eqref{eq:conc}.}
        \label{fig:applConc_2}
    \end{minipage}
    \hfill
    \begin{minipage}{0.46\linewidth}
        \centering
        \resizebox{\linewidth}{!}{\input{./Figures/Fig_Appl_Beta_2.tex}}
        \caption{Historical effect estimates $\widehat{\beta}_t(s)$, $s\leq t$, with $t=0.1, 0.2,\dots,1$, for the combined regression model \eqref{eq:combined} and the historical model \eqref{eq:hist}.}
        \label{fig:applHist_2}
    \end{minipage}
\end{figure}

In the presence of a non-zero historical effect, we have $\alpha^\star(t)\neq \alpha(t)$ and $\widehat{\alpha}^\star(t)\neq \widehat{\alpha}(t)$ for at least some $t\in[0,1]$. Under this situation, the infeasible inference on $\alpha(t)$ is not equivalent to feasible inference on $\alpha^\star(t)$ as in Section \ref{sec:appl1}. However, we still may use the asymptotic distribution of $\widehat{\alpha}^\star(t)$ to conduct inference on $\alpha(t)$ using the idea of relevance testing; namely, in situations where it is plausible to have access (by expert knowledge or previous studies) to a relevance band $\mathcal{B}(t)=[\mathcal{B}_\ell(t), \mathcal{B}_u(t)]$ such that
\begin{equation}\label{eq:relevance_band}
\mathcal{B}_\ell(t) \leq \int_0^t \beta_t(s) \frac{\sigma_X(t,s)}{\sigma_X(t,t)} \dd s \leq \mathcal{B}_u(t)\quad\text{for all}\quad t\in[0,1].
\end{equation}

Observe that the intrinsically untestable null-hypothesis of interest
\begin{equation}\label{eq:nullhyp_alpha_2}
H_0\colon \alpha(t) = 0 \quad\text{for all}\quad t\in[0,1]
\end{equation}
is equivalent to
\begin{equation}\label{eq:nullhyp_alphaStar_2}
H_0\colon \alpha^{\star}(t) = \int_0^t \beta_t(s) \frac{\sigma_X(t,s)}{\sigma_X(t,t)} \dd s \quad\text{for all}\quad t\in[0,1],
\end{equation}
since $\alpha(t) = \alpha^{\star}(t) - \int_0^t \beta_t(s) \sigma_X(t,s)/\sigma_X(t,t) \dd s$. The null-hypothesis \eqref{eq:nullhyp_alphaStar_2} is generally infeasible since the right-hand-side is typically unknown. However, if there is expert knowledge that allows to specify a relevance band $\mathcal{B}(t)$ as in \eqref{eq:relevance_band}, then we can formulate the following testable null- and alternative hypotheses
\begin{align}
H_0\colon & \alpha^{\star}(t) \in \mathcal{B}(t) \quad\text{for all}\quad t\in[0,1]\label{eq:nullhyp_alphaStar_relevance}\\
H_1\colon &\alpha^{\star}(t) \notin \mathcal{B}(t) \quad\text{for some}\quad t\in[0,1].\label{eq:althyp_alphaStar_relevance}
\end{align}

Clearly, by the definition of the relevance band $\mathcal{B}(t)$, we have that if
$\alpha^{\star}(t) \notin \mathcal{B}(t)$ for some $t\in[0,1]$, then $\alpha^{\star}(t) \neq \int_0^t \beta_t(s) \frac{\sigma_X(t,s)}{\sigma_X(t,t)} \dd s$ for the same $t\in[0,1]$. Thus, rejecting the null-hypothesis \eqref{eq:nullhyp_alphaStar_relevance} implies that the generally untestable null-hypothesis \eqref{eq:nullhyp_alphaStar_2} and its equivalent null-hypothesis \eqref{eq:nullhyp_alpha_2} can also be rejected.

To construct a plausible relevance band $\mathcal{B}(t)$, we divide the data into a pre-estimation sample ($n=40$) and a test sample ($n=80$). We use the pre-estimation sample with $n=40$ to obtain an estimate of $\int_0^t \beta_t(s) \sigma_X(t,s)/\sigma_X(t,t) \dd s$ using $\int_0^t \widehat{\beta}_t(s) \widehat{\sigma}_X(t,s)/\widehat{\sigma}_X(t,t) \dd s$. In consultation with experts, we then specify a relevance band $\mathcal{B}(t)$ around this estimate, which is used to conduct the relevance test \eqref{eq:nullhyp_alphaStar_relevance}--\eqref{eq:althyp_alphaStar_relevance} on the test sample with $n=80$ observations using the confidence band \eqref{eq:SCB} for $\alpha^\star(t)$.

Figure~\ref{fig:ConfBand_2} shows the simultaneous 95 \% confidence band \eqref{eq:SCB} for $\alpha^\star(t)$ and the relevance band $\mathcal{B}(t)$ against which we compare the simultaneous confidence band. Both bands overlap for the totality of the gait cycle, indicating that we cannot reject the null-hypothesis \eqref{eq:nullhyp_alphaStar_relevance} of $\alpha^\star(t)$ being within the relevance band $\mathcal{B}(t)$, and hence cannot reject the null-hypothesis \eqref{eq:nullhyp_alpha_2} of no concurrent effect $\alpha(t)=0$ for all $t\in[0,1]$.

\begin{figure}[!tbp]
	\centering
	\resizebox{0.46\linewidth}{!}{\input{./Figures/Fig_Appl_ConfidenceBand_2.tex}}
	\caption{Application of Section \ref{sec:appl2}: 95 \% SCB for the concurrent effect $\alpha^{\star}(t)$ and the relevance band $\mathcal{B}(t)$.}
	\label{fig:ConfBand_2}
\end{figure}

\section{Conclusion}\label{sec:conclusion}

We have studied a function-on-function regression model that combines a historical effect of past predictor values with a concurrent pointwise effect. Since the concurrent term is a point evaluation while the historical term is an $L^2$-integral effect, the two components are not automatically separable. We characterized this ambiguity and showed that the concurrent and historical coefficients are separately identified when the covariance eigenfunctions of the predictor are not pointwise square-summable.

Building on this identification result, we proposed an estimator based on an orthogonalized representation of the predictor process. The resulting estimation problem differs from standard functional linear regression because the relevant residualized predictor is not observed and must be estimated from the data. Our asymptotic analysis establishes consistency rates for the historical and concurrent estimators and reveals a smoothness-driven trade-off: smoother predictor trajectories lead to faster convergence of the historical component, whereas rougher trajectories yield improved rates for the concurrent component.

The simulation study and the two gait-data applications illustrate the practical importance of separating concurrent and historical effects. In particular, omitting one component can lead to omitted-variable bias in the estimated coefficient of the other component, whereas the combined model can disentangle the two types of effects. The weak convergence result for the well-posed coefficient $\alpha^\star$ further provides a basis for simultaneous confidence bands for the reduced concurrent coefficient, while full inference for the structural coefficients remains an intrinsically more delicate problem.

\paragraph{Funding.}
The first and second authors gratefully acknowledge financial support by the Deutsche Forschungsgemeinschaft (DFG, German Research Foundation) under Germany's Excellence Strategy – EXC-2047/1 – 390685813.
The third author gratefully acknowledges financial support from the Deutsche Forschungsgemeinschaft (DFG, German Research Foundation), grant number 511905296.

\paragraph{\textsf{R}-package \texttt{chflr}.}
The \textsf{R}-package \texttt{chflr} implements the estimation procedure for the combined historical and concurrent functional linear regression model described in Section \ref{sec:estimation}, and is available at \url{https://github.com/lidom/chflr\_repo}.

\clearpage
\part*{Supplement to ``Combining Concurrent and Historical Functional Linear Regression''}

This supplementary material provides technical proofs of all lemmas and theorems of the main paper.

\appendix
\section{Proofs for Section 2 of the main paper -- Identifiability}

\subsection{Proof of Lemma 2.2 of the main paper}

Suppose first that (i) holds.
Let $\xi_k=\langle X^\mu,\psi_k\rangle$ denote the principal component scores.
Taking covariances with $\xi_k$ on both sides of $\alpha(t)X^\mu(t)=\langle\breve\beta_t,X^\mu\rangle$ gives $\alpha(t)\lambda_k\psi_k(t)=\lambda_k\langle\breve\beta_t,\psi_k\rangle$ for every $k\in\mathbb N$.
Since $\lambda_k>0$, it follows that $\langle\breve\beta_t,\psi_k\rangle=\alpha(t)\psi_k(t)$ for every $k\in\mathbb N$.
Because $\breve\beta_t\in\operatorname{Ker}(\Gamma_X)^\perp$, it has the basis representation $\breve\beta_t=\sum_{k=1}^\infty\langle\breve\beta_t,\psi_k\rangle\psi_k$.
Hence $\|\breve\beta_t\|^2=\sum_{k=1}^\infty\langle\breve\beta_t,\psi_k\rangle^2=\alpha(t)^2\sum_{k=1}^\infty\psi_k(t)^2$.
Since $\|\breve\beta_t\|<\infty$ and $\alpha(t)\neq0$, this implies $\sum_{k=1}^\infty\psi_k(t)^2<\infty$.

Conversely, suppose that statement (ii) holds and define $\breve\beta_t=\sum_{k=1}^\infty\alpha(t)\psi_k(t)\psi_k$.
Then $\breve\beta_t\in\operatorname{Ker}(\Gamma_X)^\perp\subseteq L^2[0,1]$, since $\|\breve\beta_t\|^2=\alpha(t)^2\sum_{k=1}^\infty\psi_k(t)^2<\infty$.
Moreover, $\langle\breve\beta_t,\psi_k\rangle=\alpha(t)\psi_k(t)$ for every $k\in\mathbb N$.
Using the Karhunen-Lo\`eve expansion of $X^\mu$, we obtain $\langle\breve\beta_t,X^\mu\rangle=\sum_{k=1}^\infty\langle\breve\beta_t,\psi_k\rangle\langle X^\mu,\psi_k\rangle=\sum_{k=1}^\infty\alpha(t)\psi_k(t)\langle X^\mu,\psi_k\rangle=\alpha(t)X^\mu(t)$.
\hfill$\square$

\subsection{Proof of Lemma 2.3 of the main paper}

We do a proof by contradiction.
Suppose that there exists $t\in\mathcal I$ such that $\sum_{j=1}^\infty \psi_j(t)^2<\infty$.
Then there exists $K\in\mathbb N$ such that $\sum_{j=K}^\infty \psi_j(t)^2<1/4$.
For $\theta>0$, choose a continuously differentiable function $f_{\theta,t}\in\operatorname{Ker}(\Gamma_X)^\perp$ such that $f_{\theta,t}(t)\geq1$, $\int_0^1 f_{\theta,t}(s)^2\,ds=1$, and $f_{\theta,t}(s)=0$ for all $s\in[0,1]$ with $|t-s|\geq\theta$.
Then $\sum_{j=1}^\infty \langle f_{\theta,t},\psi_j\rangle^2\leq1$.
Moreover, for each fixed $j\in\mathbb N$, $\langle f_{\theta,t},\psi_j\rangle\to0$ as $\theta\to0$.
Hence there exists $\theta^*>0$ such that
$
|\sum_{j=1}^{K-1}\langle f_{\theta^*,t},\psi_j\rangle\psi_j(t)|<1/2.
$
By the assumed pointwise convergence of the eigenfunction expansion,
$
f_{\theta^*,t}(t)
=
\sum_{j=1}^\infty \langle f_{\theta^*,t},\psi_j\rangle\psi_j(t)
$.
Consequently,
\begin{align*}
1
&\leq f_{\theta^*,t}(t)\\
&\leq
\bigg|\sum_{j=1}^{K-1}\langle f_{\theta^*,t},\psi_j\rangle\psi_j(t)\bigg|
+
\bigg|\sum_{j=K}^\infty \langle f_{\theta^*,t},\psi_j\rangle\psi_j(t)\bigg|\\
&\leq
\frac12
+
\bigg(
\sum_{j=K}^\infty \langle f_{\theta^*,t},\psi_j\rangle^2
\sum_{j=K}^\infty \psi_j(t)^2
\bigg)^{1/2}\\
&<
\frac12+\frac12
=
1,
\end{align*}
which is impossible.
\hfill$\square$

\subsection{Proof of Theorem 2.4 of the main paper}

First, consider the case $\breve\alpha(t)\neq\alpha(t)$ and $\|\breve\beta_t-\beta_t\|>0$.
We use a proof by contradiction.
Suppose that
$
\E((\mathcal L(X^\mu)(t)-\breve\alpha(t)X^\mu(t)-\langle\breve\beta_t,X^\mu\rangle)^2)=0
$.
Then
\[
\E\left(((\alpha(t)-\breve\alpha(t))X^\mu(t)+\langle\beta_t-\breve\beta_t,X^\mu\rangle)^2\right)=0,
\]
and hence
$(\alpha(t)-\breve\alpha(t))X^\mu(t)=\langle\breve\beta_t-\beta_t,X^\mu\rangle
$ almost surely.
The characterization in Lemma 2.2 of the main paper is invariant under multiplication of $X^\mu(t)$ by a nonzero scalar.
Applied with the scalar $\alpha(t)-\breve\alpha(t)$, it yields $\sum_{k=1}^\infty\psi_k(t)^2<\infty$, contradicting Assumption I.5 of the main paper.
Thus the assertion holds in this case.

Next, consider the case where $\|\breve\beta_t-\beta_t\|=0$ and $\breve\alpha(t)\neq\alpha(t)$.
Then
$$
\E\left((\mathcal L(X^\mu)(t)-\breve\alpha(t)X^\mu(t)-\langle\breve\beta_t,X^\mu\rangle)^2\right)
=
(\alpha(t)-\breve\alpha(t))^2\sigma_X(t,t)>0,
$$
because $\sigma_X(t,t)>0$ by Assumption I.2 of the main paper.

It remains to consider the case where $\breve\alpha(t)=\alpha(t)$ and $\|\breve\beta_t-\beta_t\|>0$.
In this case,
\begin{align*}
\E\left((\mathcal L(X^\mu)(t)-\breve\alpha(t)X^\mu(t)-\langle\breve\beta_t,X^\mu\rangle)^2\right)
&=
\E\left(\langle\beta_t-\breve\beta_t,X^\mu\rangle^2\right) \\
&=
\sum_{k=1}^\infty\lambda_k\langle\beta_t-\breve\beta_t,\psi_k\rangle^2>0.
\end{align*}
The last inequality follows because $\beta_t-\breve\beta_t\in\operatorname{Ker}(\Gamma_X)^\perp$, $\|\beta_t-\breve\beta_t\|>0$, and all eigenvalues on $\operatorname{Ker}(\Gamma_X)^\perp$ are positive.
This completes the proof.
\hfill$\square$

\subsection{Uniform version of Theorem 2.4 of the main paper}

\begin{theorem}\label{thm:UniformIdentification}
Let Assumptions I.1--I.5 of the main paper hold true and consider another pair of coefficient functions $\breve\alpha:[0,1]\to\mathbb R$ and $\breve\beta:[0,1]^2\to\mathbb R$ with $\breve\beta_t:=\breve\beta(t,\cdot)\in\operatorname{Ker}(\Gamma_X)^\perp$ and $\breve\beta(t,s)=0$ for all $s>t$.
Assume that $\alpha$ and $\breve\alpha$ are continuous on $[0,1]$ and that $\beta$ and $\breve\beta$ are continuous on $\{(t,s):0\leq s\leq t\leq1\}$.
Let $I\subseteq[0,1]$ be compact.
If, for every $t\in I$, at least one of the two inequalities $\breve\alpha(t)\neq\alpha(t)$ or $\|\breve\beta_t-\beta_t\|\neq0$ holds, then
$$
  \inf_{t\in I}
  \E\Big((\mathcal L(X^\mu)(t)-\breve\alpha(t)X^\mu(t)-\langle\breve\beta_t,X^\mu\rangle)^2\Big)>0.
$$
\end{theorem}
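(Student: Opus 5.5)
Define $F(t):=\E\big((\mathcal L(X^\mu)(t)-\breve\alpha(t)X^\mu(t)-\langle\breve\beta_t,X^\mu\rangle)^2\big)$. With $a(t):=\alpha(t)-\breve\alpha(t)$ and $b_t:=\beta_t-\breve\beta_t$, expanding the square gives
\[
F(t)=a(t)^2\sigma_X(t,t)+2a(t)\langle b_t,\sigma_X(t,\cdot)\rangle+\langle b_t,\Gamma_X(b_t)\rangle .
\]
The pointwise Theorem~\ref{th:identification} already yields $F(t)>0$ for every $t\in I$. The plan is therefore to show that $F$ is continuous on $I$. Since $I$ is compact, $F$ then attains its infimum at some $t_0\in I$, and $\inf_I F=F(t_0)>0$.

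Continuity is checked term by term.
\begin{itemize}
\item $a$ is continuous and $\sigma_X$ is continuous on $[0,1]^2$ by Assumption~\ref{I-ass:X-structure}, so $a(t)^2\sigma_X(t,t)$ is continuous.
\item The key ingredient for the remaining terms is that $t\mapsto b_t$ is continuous as a map from $[0,1]$ into $L^2[0,1]$. For $t'\to t$,
\[
\|b_{t'}-b_t\|^2=\int_0^{\min(t,t')}\big(b(t',s)-b(t,s)\big)^2ds+\int_{\min(t,t')}^{\max(t,t')}b(\cdot,s)^2ds .
\]
Here $b=\beta-\breve\beta$ is continuous on the compact triangle $\Delta$, hence uniformly continuous and bounded. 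So the first integral tends to $0$ by uniform continuity, and the second is bounded by $\|b\|_\infty^2|t'-t|$.
\item $t\mapsto\sigma_X(t,\cdot)\in L^2$ is continuous by uniform continuity of $\sigma_X$. Hence $\langle b_t,\sigma_X(t,\cdot)\rangle$ is continuous.
\item $\langle b_t,\Gamma_X b_t\rangle$ is continuous because $\Gamma_X$ is a bounded operator.
\end{itemize}

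The main obstacle is the jump of $\beta_t$ at $s=t$: $\beta_t$ is continuous only on $\Delta$ and is set to zero for $s>t$. The split of the integral above handles this, since the boundary strip has width $|t'-t|$ and $b$ is bounded on it.

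It remains to invoke the pointwise positivity at $t_0$. Assumption~\ref{I-ass:eigenfct} holds at every $t$, so Theorem~\ref{th:identification} applies to $t_0$, with $\breve\beta_{t_0}\in\operatorname{Ker}(\Gamma_X)^\perp$ as required. By hypothesis $a(t_0)\neq0$ or $\|b_{t_0}\|>0$, so $F(t_0)>0$, which completes the argument.
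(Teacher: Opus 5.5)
Your proposal is correct and follows the same route as the paper: you expand the mean squared error into the three covariance terms, show that the resulting function of $t$ is continuous, and combine pointwise positivity with compactness of $I$. The differences are cosmetic. You cite Theorem~\ref{th:identification} for pointwise positivity, whereas the paper re-derives it inline via Lemma~\ref{lem:identification}. You also prove the $L^2$-continuity of $t\mapsto\beta_t-\breve\beta_t$ across the jump at the diagonal $s=t$ explicitly, which the paper only asserts.
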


\begin{proof}
Define
$$
f(t)
:=
\mathcal L(X^\mu)(t)-\breve\alpha(t)X^\mu(t)-\langle\breve\beta_t,X^\mu\rangle
=
(\alpha(t)-\breve\alpha(t))X^\mu(t)+\langle\beta_t-\breve\beta_t,X^\mu\rangle .
$$
Then
\begin{align*}
g(t)
:=
\E(f(t)^2)
&=
(\alpha(t)-\breve\alpha(t))^2\sigma_X(t,t)
+2(\alpha(t)-\breve\alpha(t))\langle\sigma_X(t,\cdot),\beta_t-\breve\beta_t\rangle  \\
&\quad
+\langle\beta_t-\breve\beta_t,\Gamma_X(\beta_t-\breve\beta_t)\rangle .
\end{align*}
Because $\alpha-\breve\alpha$ is continuous on $[0,1]$, $\beta-\breve\beta$ is continuous on $\{(r,s):0\leq s\leq r\leq1\}$, and $\sigma_X$ is continuous on $[0,1]^2$ by Assumption I.2 of the main paper, it follows that $g$ is continuous on $[0,1]$.
Fix $t\in I$ and suppose that $g(t)=0$.
Then
\begin{equation}\label{eq:rep-eq1}
(\alpha(t)-\breve\alpha(t))X^\mu(t)
=
\langle\breve\beta_t-\beta_t,X^\mu\rangle
\quad\text{a.s.}
\end{equation}
If $\alpha(t)=\breve\alpha(t)$, then \eqref{eq:rep-eq1} gives $\langle\breve\beta_t-\beta_t,X^\mu\rangle=0$ a.s., which implies $\breve\beta_t-\beta_t\in\operatorname{Ker}(\Gamma_X)$.
However, since $\breve\beta_t-\beta_t\in\operatorname{Ker}(\Gamma_X)^\perp$, it follows that $\|\breve\beta_t-\beta_t\|=0$, which contradicts the condition of the theorem.

If $\alpha(t)\neq\breve\alpha(t)$, then \eqref{eq:rep-eq1} represents a nonzero scalar multiple of the point evaluation $X^\mu(t)$ as an $L^2$-linear functional of $X^\mu$.
The characterization in Lemma 2.2 of the main paper is invariant under nonzero rescaling of this point evaluation.
Therefore, \eqref{eq:rep-eq1} implies $\sum_{k=1}^\infty\psi_k(t)^2<\infty$, contradicting Assumption I.5 of the main paper.
Hence $g(t)>0$ for every $t\in I$.
Since $g$ is continuous on the compact set $I$, it attains a strictly positive minimum on $I$.
Thus $\inf_{t\in I}g(t)>0$, which proves the assertion.
\end{proof}

\section{Proofs for Section 4 of the main paper -- Asymptotic Results}

\subsection{Auxiliary lemmas}

\begin{lemma} \label{lem:kerneleigenvaluedecay}
Let $K:[0,1]^2\to\mathbb R$ be a continuous, symmetric and positive semi‑definite kernel function.
Fix $d\in\mathbb N_0$ and assume the mixed derivative
$$
K_d(t,s)=\partial_t^{\,d}\partial_s^{\,d}K(t,s)
$$
exists and is continuous on $[0,1]^2$.
If there exist $0<\alpha\le1$ and $A\in L^1[0,1]$ such that
$$
|K_d(t,s+h)-K_d(t,s)|\le A(t)\,|h|^\alpha\qquad
 \text{for all} \ t,s\in[0,1],\;s+h\in[0,1],
$$
then the eigenvalues $(\lambda_k)_{k\ge1}$ of the associated integral operator, arranged in non-increasing order, satisfy
$$
\lambda_k = O\big(k^{-(2d+1+\alpha)}\big)\qquad(\text{as} \ k\to\infty).
$$
\end{lemma}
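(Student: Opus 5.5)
Following the classical argument for smooth kernels (in the spirit of Reade's and Kühn's work), we will bound the eigenvalues by the error of a finite-rank approximation. Since $K$ is positive semi-definite, the integral operator $T_K$ is self-adjoint and nonnegative. The min–max principle then gives, for any operator $F$ of rank at most $r$,
\[
\lambda_{r+1}\le \|T_K-F\|_{\mathrm{op}}.
\]
To reach the summed order $k^{-(2d+1+\alpha)}$ rather than a pointwise bound, we instead use a trace/Hilbert–Schmidt version. Let $P$ be the orthogonal projection onto a space $V_r$ of dimension $r$. Then
\[
\sum_{k>r}\lambda_k\le \operatorname{tr}\big((I-P)T_K(I-P)\big).
\]
Because the $\lambda_k$ are nonincreasing, $\tfrac{r}{2}\lambda_{2r}\le\sum_{k=r+1}^{2r}\lambda_k$. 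So it suffices to construct $V_r$ with
\[
\operatorname{tr}\big((I-P)T_K(I-P)\big)=O\big(r^{-(2d+\alpha)}\big).
\]

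The plan is to factor $T_K=SS^*$ and move the smoothness onto $S$. Mercer's theorem gives $K(t,s)=\sum_j\lambda_j\phi_j(t)\phi_j(s)$, so $K(t,s)=\langle k_t,k_s\rangle_{\mathcal H}$ in the RKHS $\mathcal H$ of $K$, with $k_t=K(t,\cdot)$. Then
\[
\operatorname{tr}\big((I-P)T_K(I-P)\big)=\int_0^1\|k_t-\Pi k_t\|_{\mathcal H}^2\,dt,
\]
where $\Pi$ is a suitable approximation in $\mathcal H$. A convenient choice of $V_r$ is the space of piecewise polynomials of degree $\le d$ on $r$ equal intervals of length $h=1/r$ (dimension $(d+1)r$). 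On each interval, take the Taylor-type local polynomial interpolation of $t\mapsto k_t\in\mathcal H$ of order $d$.

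The existence and continuity of $K_d=\partial_t^d\partial_s^dK$ mean that $t\mapsto k_t$ is $d$ times differentiable as an $\mathcal H$-valued map, with $\langle k_t^{(d)},k_s^{(d)}\rangle_{\mathcal H}=K_d(t,s)$. The Taylor remainder on an interval $I_i$ with left point $t_i$ is
\[
\frac{1}{(d-1)!}\int_{t_i}^t (t-u)^{d-1}\big(k^{(d)}_u-k^{(d)}_{t_i}\big)\,du .
\]
(For $d=0$ it is simply $k_t-k_{t_i}$.) Its squared $\mathcal H$-norm is bounded by $h^{2d}\sup_{u\in I_i}\|k_u^{(d)}-k_{t_i}^{(d)}\|_{\mathcal H}^2$. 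We then use
\[
\|k_u^{(d)}-k_{t_i}^{(d)}\|_{\mathcal H}^2=K_d(u,u)-2K_d(u,t_i)+K_d(t_i,t_i).
\]
By the Hölder hypothesis applied in the second argument, together with symmetry, this is at most $(A(u)+A(t_i))|u-t_i|^\alpha$.

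Integrating over $t$ and summing over the $r$ intervals should give
\[
\sum_i h\cdot h^{2d}h^{\alpha}\cdot \big(\text{average of }A\text{ on }I_i\big)\lesssim h^{2d+\alpha}\|A\|_{L^1}.
\]
Since $h=r^{-1}$, this yields $\sum_{k>(d+1)r}\lambda_k=O(r^{-(2d+\alpha)})$ and hence $\lambda_k=O(k^{-(2d+1+\alpha)})$.

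The main obstacle is the step where $A$ is only in $L^1$, not bounded. The term $A(t_i)$ at a fixed node is not controlled by $\|A\|_{L^1}$. A supremum over $u\in I_i$ of $A(u)$ is also uncontrolled. We would try first to avoid fixed nodes by averaging: replace the Taylor center $t_i$ by the interval mean $\bar k^{(d)}_{I_i}=h^{-1}\int_{I_i}k^{(d)}_v\,dv$ (the mean projection). This gives
\[
\|k_u^{(d)}-\bar k^{(d)}\|^2\le h^{-1}\int_{I_i}\|k^{(d)}_u-k^{(d)}_v\|^2dv\le h^{-1}\int_{I_i}(A(u)+A(v))h^\alpha\,dv .
\]
After integrating in $u$, this involves only $\int_{I_i}A$. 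It remains to check that the resulting Taylor-like approximation still lies in a space of dimension $O(r)$ in $L^2$, i.e.\ that the corresponding $F$ has rank $O(r)$. It also remains to verify that the trace identity above survives this modification.
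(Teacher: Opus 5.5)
The paper gives no argument of its own for this lemma; it only cites Cochran and Lukas (1988), Theorem~1 together with their Main Theorem. Your route is a genuinely different, self-contained proof: Ky Fan's bound $\sum_{k>r}\lambda_k\le\operatorname{tr}((I-P)T_K(I-P))$, the factorization $T_K=SS^*$ with $(Sg)(t)=\langle g,k_t\rangle_{\mathcal H}$, and approximation of $t\mapsto k_t$ by $\mathcal H$-valued piecewise polynomials. It works, but the step you flag is not yet closed.

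Averaging the top Taylor coefficient is the right idea. Combined with your supremum bound on the remainder, however, it fails again: the pointwise bound $\|k^{(d)}_u-\bar k^{(d)}_{I_i}\|^2\le h^\alpha(A(u)+\bar A_i)$, with $\bar A_i=h^{-1}\int_{I_i}A$, still brings back $\sup_{u\in I_i}A(u)$. Use Cauchy--Schwarz in the integral form of the remainder instead. For $t\in I_i$ and your modified approximant $g$,
\[
\|k_t-g(t)\|_{\mathcal H}^2\le \frac{h^{2d-1}}{(2d-1)((d-1)!)^2}\int_{I_i}\|k_u^{(d)}-\bar k^{(d)}_{I_i}\|_{\mathcal H}^2\,du ,
\]
and by Jensen and your symmetric H\"older estimate,
\[
\int_{I_i}\|k_u^{(d)}-\bar k^{(d)}_{I_i}\|_{\mathcal H}^2\,du\le \frac1h\int_{I_i}\int_{I_i}(A(u)+A(v))|u-v|^\alpha\,du\,dv\le 2h^\alpha\int_{I_i}A .
\]
Integrating over $t\in I_i$ and summing over the $r$ intervals gives $C_d h^{2d+\alpha}\|A\|_{L^1}$. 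For $d=0$, apply the same computation directly to $k_t-\bar k_{I_i}$.

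Your two ``remaining checks'' disappear once the trace identity is stated precisely. Let $V_r$ be the scalar piecewise polynomials of degree $\le d$ on the $r$ intervals, so $\dim V_r=(d+1)r$, and let $P$ be the $L^2$-projection onto $V_r$. Then
\[
\operatorname{tr}((I-P)T_K(I-P))=\|(I-P)S\|_{\mathrm{HS}}^2=\min_{g\in V_r\otimes\mathcal H}\int_0^1\|k_t-g(t)\|_{\mathcal H}^2\,dt ,
\]
with the minimum attained by applying $P$ to $t\mapsto k_t$ in the $t$-variable. Hence every $\mathcal H$-valued piecewise polynomial of degree $\le d$, in particular your averaged Taylor approximant, gives an upper bound, and no operator $F$ has to be constructed.

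What still needs a reference or a short proof is that $t\mapsto k_t$ is $d$ times continuously differentiable into $\mathcal H$ with $\langle k_t^{(d)},k_s^{(d)}\rangle_{\mathcal H}=K_d(t,s)$. This is the derivative-reproducing property. It is standard when the mixed derivatives $\partial_t^j\partial_s^jK$, $j\le d$, are continuous, and it follows from the Cauchy criterion for difference quotients, whose inner products are mixed differences of $K$.

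Compared with the bare citation, your argument is elementary and gives the explicit bound $\sum_{k>(d+1)r}\lambda_k\le C_d\|A\|_{L^1}r^{-(2d+\alpha)}$. It makes clear that $A$ enters only through $\int A$, and it isolates the quantity that actually drives the decay, $K_d(u,u)-2K_d(u,v)+K_d(v,v)$.

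In the paper's application (proof of Lemma~\ref{lem:eigenvaluedecay}), $A$ is constant, so even your original fixed-node version suffices there. Moreover, that quantity equals $\operatorname{Var}(X^{(d)}(u)-X^{(d)}(v))\le \E(K_X^2)|u-v|^{2\kappa}$, where $K_X$ is the pathwise H\"older constant. Running your argument with the feature map $t\mapsto X^\mu(t)\in L^2(\Omega)$ would therefore give the sharper $\lambda_k=O(k^{-(2d+2\kappa+1)})$.
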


\begin{proof}
See \cite{cochran1988differentiable}, Theorem 1 together with their Main Theorem.
\end{proof}

\begin{lemma} \label{lem:studentized}
Let $X_1, \ldots, X_n$ be a sequence of i.i.d.\ copies of a stochastic process $X$ on $[0,1]$ with $\inf_{t \in [0,1]} \V(X(t)) > 0$ and $\sup_{t\in [0,1]} \E(|X(t)|^q) < \infty$ for some $q \geq 4$. Define the studentized variables $X_i^{\hat{\sigma}}(t) = (X_i(t) - \bar{X}(t))/\sqrt{\widehat{\sigma}_X(t,t)}$, $t \in [0,1]$. Then
$\sup_{n \geq 2} \sup_{1 \leq i \leq n} \sup_{t\in[0,1]}
\mathbb E|X_i^{\widehat\sigma}(t)|^{q/2}
<\infty$.
\end{lemma}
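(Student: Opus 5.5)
We need to show that $\sup_{n\ge2}\sup_i\sup_t \E|X_i^{\widehat\sigma}(t)|^{q/2}<\infty$. The plan is to split the studentized quantity into a numerator with $q$ moments and a denominator whose inverse has controlled moments, then combine the two with Cauchy--Schwarz.

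\textbf{Step 1 (Cauchy--Schwarz).} Write $X_i^{\widehat\sigma}(t)=X_i^{\bar\mu}(t)\,\widehat\sigma_X(t,t)^{-1/2}$. Then
\[
\E|X_i^{\widehat\sigma}(t)|^{q/2}\le \big(\E|X_i^{\bar\mu}(t)|^{q}\big)^{1/2}\big(\E\,\widehat\sigma_X(t,t)^{-q/2}\big)^{1/2}.
\]

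\textbf{Step 2 (numerator).} By Minkowski, $\E|X_i^{\bar\mu}(t)|^q\le 2^{q}\sup_t\E|X(t)|^q$. This is uniform in $n$, $i$ and $t$.

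\textbf{Step 3 (denominator, main obstacle).} The difficulty is that $\widehat\sigma_X(t,t)$ can be small, for instance when all $X_i(t)$ nearly coincide. Without a density or anti-concentration assumption, inverse moments need not be finite.

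\emph{Small $n$.} A bound for $\E\,\widehat\sigma_X(t,t)^{-q/2}$ does not follow from moments alone; in fact, if $X(t)$ has atoms, $\widehat\sigma_X=0$ with positive probability and $X_i^{\widehat\sigma}$ is undefined. I would therefore avoid inverse moments and instead exploit the self-normalization. Since $\sum_j (X_j^{\bar\mu}(t))^2=n\widehat\sigma_X(t,t)$, we have the deterministic bound
\[
|X_i^{\widehat\sigma}(t)|\le \sqrt n,
\]
with the convention $0/0=0$. This gives finiteness for each fixed $n$ and disposes of all small $n$.

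\emph{Large $n$.} Split on the event $E_n=\{\widehat\sigma_X(t,t)\ge \sigma_X(t,t)/2\}$:
\[
\E|X_i^{\widehat\sigma}|^{q/2}\le (2/\underline\sigma)^{q/4}\,\E|X_i^{\bar\mu}|^{q/2}+n^{q/4}\,P(E_n^c),
\]
where $\underline\sigma=\inf_t\sigma_X(t,t)>0$. It remains to show $\sup_t P(E_n^c)=O(n^{-q/4})$.

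Write
\[
\widehat\sigma_X(t,t)-\sigma_X(t,t)=n^{-1}\sum_j\big(X_j^\mu(t)^2-\sigma_X(t,t)\big)-\big(\bar X(t)-\mu(t)\big)^2 .
\]
Apply Markov's inequality with Rosenthal (or Marcinkiewicz--Zygmund) moment bounds of order $q/2$ to the centered squares, which have $q/2\ge2$ moments uniformly in $t$. This gives a probability of order $n^{-q/4}$. The mean term is handled similarly using $q$-th moments, giving $O(n^{-q/2})$. Hence $n^{q/4}P(E_n^c)=O(1)$ uniformly in $t$.

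\textbf{Conclusion.} Combining the large-$n$ bound with the deterministic bound for small $n$, uniformly over $i$ by exchangeability, proves the claim.
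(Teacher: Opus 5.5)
Your proof is correct and follows essentially the same route as the paper: you bound $|X_i^{\widehat\sigma}(t)|^{q/2}$ on an event where $\widehat\sigma_X(t,t)$ is at least a fixed fraction of $\sigma_X(t,t)$, use the deterministic bound $|X_i^{\widehat\sigma}(t)|\le\sqrt n$ on the complement, and show $n^{q/4}P(\text{complement})=O(1)$ uniformly in $t$ via Markov and Rosenthal of order $q/2$. The paper also intersects with a mean-deviation event, which your direct moment bound on $X_i^{\bar\mu}(t)$ makes unnecessary; your Step 1 inverse-moment route is a dead end you rightly drop, the separate small-$n$ case is redundant because the Rosenthal constants do not depend on $n$, and your $0/0=0$ convention covers the degenerate event $\widehat\sigma_X(t,t)=0$ that the paper passes over.
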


\begin{proof}
Note that bounding the numerator and denominator separately using standard Cauchy-Schwarz arguments leads to cases where the denominator is not bounded away from zero.
Therefore, we fix some $0 < \zeta < 0.5$ and treat the convenient event
$$
	G_n = \Big\{ |\bar X(t) - \E(X(t))| \leq \zeta \Big\} \cap \Big\{\widehat \sigma_X(t,t) \geq (1-\zeta) \V(X(t)) \Big\}
$$
separately from the troublesome event $G_n^c$, and we aim to show that $\E(|X_i^{\hat \sigma}(t)|^{q/2} \mathbbm{1}_{G_n}) < \infty$ and $\E(|X_i^{\hat \sigma}(t)|^{q/2} \mathbbm{1}_{G_n^c}) < \infty$.
For the convenient event, by \eqref{eq:elementaryinequality}
\begin{align*}
	\E(|X_i^{\hat \sigma}(t)|^{q/2} \mathbbm{1}_{G_n})
	&\leq \E\bigg( \Big| \frac{|X_i(t) - \E(X(t))| + |\E(X(t)) - \bar X(t) |}{\sqrt{\widehat \sigma_X(t,t)}} \Big|^{q/2}  \mathbbm{1}_{G_n}\bigg) \\
	&\leq \E\bigg( \frac{\big(|X_i(t)-\E(X(t))|+\zeta\big)^{q/2}}{(1-\zeta)^{q/4}\V(X(t))^{q/4}}  \bigg) \\
	&\leq 2^{q/2-1} \frac{\E(|X_i(t) - \E(X(t))|^{q/2}) + \zeta^{q/2}}{(1-\zeta)^{q/4} \V(X(t))^{q/4}}  < \infty
\end{align*}
because $q$-th moments of $X(t)$ are bounded and $\V(X(t)) > 0$.
For the troublesome event $G_n^c$, note that
$$
	|X_i^{\hat \sigma}(t)|^{q/2}
	= \bigg(\frac{|X_i(t) - \bar X(t)|^2}{\widehat \sigma_X(t,t)} \bigg)^{q/4}
	\leq n^{q/4},
$$
because $|X_i(t) - \bar X(t)|^2 \leq n \widehat \sigma_X(t,t)$.
Furthermore,
$$
	\widehat \sigma_X(t,t) = \frac{1}{n} \sum_{j=1}^n (X_j(t) - \E(X(t)))^2 - (\bar X(t) - \E(X(t)))^2,
$$
and let $Z_j(t) := (X_j(t) - \E(X(t)))^2 - \V(X(t))$.
Then,
\begin{align}
	&\E(|X_i^{\hat \sigma}(t)|^{q/2} \mathbbm{1}_{G_n^c})
	\leq n^{q/4} P(G_n^c) \nonumber \\
	&\leq n^{q/4} P\Big(|\bar X(t) - \E(X(t))| > \zeta\Big) +  n^{q/4} P\Big(\widehat \sigma_X(t,t) < (1-\zeta) \V(X(t))\Big) \nonumber \\
	&= n^{q/4} P\bigg( \Big|\sum_{j=1}^n (X_j(t) - \E(X(t))) \Big|^{q/2} > n^{q/2} \zeta^{q/2} \bigg) \label{eq:studentized1}  \\
	&\quad + n^{q/4} P\bigg( \frac{1}{n} \sum_{j=1}^n Z_j(t) - (\bar X(t) - \E(X(t)))^2 < -\zeta \V(X(t)) \bigg), \label{eq:studentized2}
\end{align}
and it remains to show that \eqref{eq:studentized1} and \eqref{eq:studentized2} are bounded.
For \eqref{eq:studentized1} we apply Markov's inequality:
\begin{align}
	&n^{q/4} P\bigg( \Big|\sum_{j=1}^n (X_j(t) - \E(X(t))) \Big|^{q/2} > n^{q/2} \zeta^{q/2} \bigg) \notag \\
	&\quad \leq \frac{\E(|\sum_{j=1}^n (X_j(t) - \E(X(t)))|^{q/2})}{\zeta^{q/2} n^{q/4}}. \label{eq:studentized1a}
\end{align}
To bound the numerator of \eqref{eq:studentized1a}, we apply Rosenthal's inequality (e.g., equation B.50 in \citealt{hansen2022econometrics}), which states that for any i.i.d. sequence $V_j$ of real-valued random variables with $\E(V_j) = 0$ then, for any $p \geq 2$ there exists a constant $R_p$ such that
\begin{align} \label{eq:rosenthal}
		\E\bigg(\Big|\sum_{j=1}^n V_j \Big|^{p}\bigg) \leq R_p \bigg( \Big( \sum_{j=1}^n \E(V_j^2) \Big)^{p/2} + \sum_{j=1}^n \E(|V_j|^p) \bigg).
\end{align}
Applied to $p=q/2$, we obtain
$$
E\bigg(\Big|\sum_{j=1}^n (X_j(t) - \E(X(t)))\Big|^{q/2}\bigg) \leq R_{q/2} n^{q/4} (\V(X(t))^{q/4} + \E(|X(t)|^{q/2})) = O(n^{q/4}),
$$
and because the denominator of \eqref{eq:studentized1a} is proportional to $n^{q/4}$, \eqref{eq:studentized1} is bounded.
To bound \eqref{eq:studentized2}, note that
\begin{align*}
& \frac{1}{n} \sum_{j=1}^n Z_j(t) - (\bar X(t) - \E(X(t)))^2 < -\zeta \V(X(t)) \\
	\Rightarrow \quad &  \frac{1}{n} \sum_{j=1}^n Z_j(t) < -\frac{\zeta}{2} \V(X(t)) \quad \text{or} \quad (\bar X(t) - \E(X(t)))^2 > \frac{\zeta}{2} \V(X(t))
\end{align*}
This holds because if both fail then $\frac{1}{n} \sum_{j=1}^n Z_j(t) - (\bar X(t) - \E(X(t)))^2 \geq -\zeta \V(X(t))$, contradicting our property of the previous line.
Therefore, \eqref{eq:studentized2} can be bounded as follows:
\begin{align}
	&P\bigg( \frac{1}{n} \sum_{j=1}^n Z_j(t) - (\bar X(t) - \E(X(t)))^2 < -\zeta \V(X(t)) \bigg) \nonumber \\
	&\leq P\Big( \Big|  \frac{1}{n} \sum_{j=1}^n Z_j(t) \Big| > \frac{\zeta}{2} \V(X(t)) \Big) + P\Big( \big| \bar X(t) - \E(X(t)) \big|^2 > \frac{\zeta}{2} \V(X(t)) \Big). \label{eq:studentized2b}
\end{align}
The second summand of \eqref{eq:studentized2b} is bounded using identical arguments as for \eqref{eq:studentized1}.
For the first summand of \eqref{eq:studentized2b}, abbreviate $z = \V(X(t))\zeta/2$.
Then, Markov's and Rosenthal's inequalities applied to the $(q/2)$-th power yield
\begin{align*}
	n^{q/4} P\bigg( \Big|  \frac{1}{n} \sum_{j=1}^n Z_j(t) \Big| >  z \bigg)
	&= n^{q/4} P\bigg( \Big| \sum_{j=1}^n Z_j(t) \Big|^{q/2} > n^{q/2} z^{q/2} \bigg) \\
	&\leq \frac{\E(|\sum_{j=1}^n Z_j(t)|^{q/2})}{n^{q/4} z^{q/2}} \\
	&\leq R_{q/2} z^{-q/2} (\E(Z_j(t)^{2})^{q/4} + n^{1-q/4} \E(|Z_j(t)|^{q/2})) \\
	&< \infty,
\end{align*}
where the last step holds because $\E(|Z_j(t)|^{q/2}) < \infty$ follows from $\E(|X(t)|^{q}) < \infty$.
Therefore,
\begin{align*}
	\E(|X_i^{\hat \sigma}(t)|^{q/2}) = \E(|X_i^{\hat \sigma}(t)|^{q/2} \mathbbm{1}_{G_n}) + \E(|X_i^{\hat \sigma}(t)|^{q/2} \mathbbm{1}_{G_n^c}) < \infty,
\end{align*}
and since all bounds in the above steps are uniform in $t$ and $n$, the assertion follows.
\end{proof}

\subsection{Proof of Lemma 4.1 of the main paper}

The $d$-th mixed partial derivative of $\sigma_X(t,s)$ is
$$
	\sigma_X^{(d)}(t,s) = \frac{\partial^d}{\partial s^d}\frac{\partial^d}{\partial t^d}
  \sigma_X(t,s) = \Cov(X^{(d)}(t), X^{(d)}(s)).
$$
Since $\E(\| X \|_{d,\kappa}^2) < \infty$, $X^{(d)}$ has $\kappa$-Hölder sample paths almost surely and we have
$$
	|X^{(d)}(t) - X^{(d)}(s)| \leq K_X |t-s|^\kappa, \qquad K_X = [X^{(d)}]_\kappa = \sup_{t \neq s} \frac{|X^{(d)}(t) - X^{(d)}(s)|}{|t-s|^{\kappa}},
$$
where $E(K_X^2) < \infty$ and $\sup_{r \in [0,1]} \E(X^{(d)}(r)^2) < \infty$.
Then, for every fixed $s$, by Cauchy-Schwarz,
\begin{align*}
	|\sigma_X^{(d)}(t+h,s) - \sigma_X^{(d)}(t,s)|
	&= \Big|\E\Big( (X^{(d)}(t+h) - X^{(d)}(t))(X^{(d)}(s) - \E(X^{(d)}(s))) \Big)\Big| \\
	&\leq \E\Big( |X^{(d)}(t+h) - X^{(d)}(t)|\ |X^{(d)}(s) - \E(X^{(d)}(s))| \Big) \\
	&\leq |h|^\kappa \E\Big( K_X\ |X^{(d)}(s) - \E(X^{(d)}(s))| \Big) \\
	&\leq |h|^\kappa \sqrt{\E(K_X^2)} \sqrt{ \sup_{r \in [0,1]} \E(X^{(d)}(r)^2) } \\
	&= A |h|^\kappa,
\end{align*}
where the deterministic function
$$
A = \sqrt{\E(K_X^2)} \sqrt{ \sup_{r \in [0,1]} \E(X^{(d)}(r)^2) }
$$
is finite and does not depend on $t$. Hence $A(\cdot) \in L^1[0,1]$, and the assertion follows with Lemma \ref{lem:kerneleigenvaluedecay} together with the fact that $\lambda_{\delta,t,j} \leq \lambda_j$ by equation (2.7) of the main paper.
\hfill$\square$

\subsection{Proof of Lemma 4.2 of the main paper}

The following elementary inequality will be used here and in subsequent proofs several times:
\begin{align} \label{eq:elementaryinequality}
\bigg|\sum_{i=1}^n x_i\bigg|^{p} \leq n^{p-1} \sum_{i=1}^n |x_i|^p, \quad p \geq 1, \quad x_i \in \mathbb R.
\end{align}
With $X_i^{\hat \sigma}(t) = (X_i(t)-\bar X(t))/\sqrt{\widehat \sigma_X(t,t)}$ denoting the studentized predictor variable, we have
\begin{align*}
	X_i^{\bar \mu}(t) \widehat v_t(s)
	&= \frac{1}{n} \sum_{j=1}^n \frac{X_j^{\bar \mu}(s) X_j^{\bar \mu}(t)}{\widehat \sigma_X(t,t)} X_i^{\bar \mu}(t)  \\
	&= \frac{1}{n} \sum_{j=1}^n \frac{X_j(s) X_j^{\bar \mu}(t)}{\widehat \sigma_X(t,t)} X_i^{\bar \mu}(t)
= \frac{1}{n} \sum_{j=1}^n  X_j(s) X_j^{\hat \sigma}(t) X_i^{\hat \sigma}(t) .
\end{align*}
Thus, our predictor variable admits the representation
\begin{align*}
	\widehat{\delta}_{i,t}(s) &= X_i(s) - \bar X(s) - X_i^{\hat \sigma}(t) W_{n,t}(s), \quad W_{n,t}(s) := \frac{1}{n} \sum_{j=1}^n X_j(s) X_j^{\hat \sigma}(t).
\end{align*}
By the triangle inequality and \eqref{eq:elementaryinequality},
$$
	\|\widehat{\delta}_{i,t}\|_{d,\kappa}^2 \leq 3 \Big( \|X_i\|_{d,\kappa}^2 + \|\bar X\|_{d,\kappa}^2 + |X_i^{\hat \sigma}(t)|^2 \, \|W_{n,t}\|_{d,\kappa}^2 \Big).
$$
Since $\E(\|X_i\|_{d,\kappa}^2) < \infty$ and $\E(\|\bar X\|_{d,\kappa}^2)< \infty$, it remains to show $\E(|X_i^{\hat \sigma}(t)|^2 \, \|W_{n,t}\|_{d,\kappa}^2) < \infty$.

By the Cauchy-Schwarz inequality together with the fact that $\frac{1}{n} \sum_{j=1}^n X_j^{\hat \sigma}(t)^2 = 1$ we get for $0 \leq \ell \leq d$,
$$
	\|W_{n,t}^{(\ell)}\|_\infty \leq \bigg( \frac{1}{n} \sum_{j=1}^n \| X_j^{(\ell)}\|_\infty^2 \bigg)^{1/2}, \qquad [W_{n,t}^{(d)}]_\kappa \leq \bigg( \frac{1}{n} \sum_{j=1}^n [X_j^{(d)}]_\kappa^2 \bigg)^{1/2},
$$
which implies
\begin{align*}
	\|W_{n,t}\|_{d,\kappa}^2 &\leq 2 \Big( \max_{0\leq \ell \leq d} \|W_{n,t}^{(\ell)}\|_\infty^2 + [W_{n,t}^{(d)}]_\kappa^2 \Big) \\
	&\leq \frac{2}{n} \sum_{j=1}^n \max_{0\leq \ell \leq d} \| X_j^{(\ell)}\|_\infty^2 + [X_j^{(d)}]_\kappa^2 \\
	&\leq \frac{2}{n} \sum_{j=1}^n \|X_j\|_{d,\kappa}^2.
\end{align*}
Therefore, by Hölder's inequality, finite sixth moments, and Lemma \ref{lem:studentized},
$$
	\E(|X_i^{\hat \sigma}(t)|^2 \, \|W_{n,t}\|_{d,\kappa}^2) \leq \frac{2}{n} \sum_{j=1}^n \E(|X_i^{\hat \sigma}(t)|^3)^{2/3} \E(\|X_j\|_{d,\kappa}^6)^{1/3} < \infty,
$$
and the assertion follows.
\hfill$\square$

\subsection{Auxiliary results for the proof of Theorem 4.3 of the main paper}

\begin{lemma} \label{lem:consistency.aux1}
Let $\widehat \sigma_{\delta,t}(u,w)= \frac{1}{n} \sum_{i=1}^n\widehat \delta_{i,t}(u)\,\widehat \delta_{i,t}(w)$ and let $\widehat{\Gamma}_{\delta,t}$ be the corresponding integral operator.
Then, under the conditions of Theorem 4.3 of the main paper,
$$
	\langle \widehat \beta_t - \beta_t, \widehat \Gamma_{\delta,t}(\widehat \beta_t - \beta_t) \rangle = O_P(n^{-\varphi}) \quad \text{as} \ n \to \infty.
$$
\end{lemma}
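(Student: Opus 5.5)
Since the estimator is the smoothing-spline solution of the infeasible regression \eqref{eq:infeasiblemodel}, $\widetilde Y_i^{\bar\mu}(t)=\langle\beta_t,\widehat\delta_{i,t}\rangle+\varepsilon_i^{\bar\mu}(t)$, I would follow the bias--variance decomposition of \citet{crambes2009}, conditioning on $X_1,\dots,X_n$. The design is $\widehat\delta_{i,t}$, which is a measurable function of the $X$'s. Because $\varepsilon$ is independent of $X$ (Assumption~A.4), the conditional mean of $\widehat{\mathbf b}_t$ is the penalized fit to the noiseless signal. The error then splits as
\[
\widehat{\mathbf b}_t-\mathbf b_t=\Big(\tfrac{1}{np}\bm D_t^\top\bm D_t+\rho\tfrac{p}{p_t}\bm A_{m,t}\Big)^{-1}\Big[\tfrac{1}{n}\bm D_t^\top(\mathbf r_t+\bm\varepsilon_t)-\rho\tfrac{p}{p_t}\bm A_{m,t}\mathbf b_t\Big],
\]
where $\mathbf b_t=(\beta_t(s_1),\dots,\beta_t(s_{p_t}))^\top$. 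Here $\mathbf r_t$ collects the discretization errors $r_i=\langle\beta_t,\widehat\delta_{i,t}\rangle-p^{-1}\sum_j\beta_t(s_j)\widehat\delta_{i,t}(s_j)$, and $\bm\varepsilon_t$ is the vector of centered errors. The quantity of interest, $\langle\widehat\beta_t-\beta_t,\widehat\Gamma_{\delta,t}(\widehat\beta_t-\beta_t)\rangle$, equals $n^{-1}\sum_i\langle\widehat\beta_t-\beta_t,\widehat\delta_{i,t}\rangle^2$. Up to a further discretization term, this is the empirical quadratic form $(np)^{-1}\|\bm D_t(\widehat{\mathbf b}_t-\mathbf b_t)/\sqrt p\|^2$.

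The key steps, in order, are as follows.

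\emph{(1) Discretization.} Lemma~\ref{lem:deltaHoelderproperties} gives $\E\|\widehat\delta_{i,t}\|_{d,\kappa}^2<\infty$ uniformly in $i$, and $\beta_t\in W^{m,2}$. Standard Riemann-sum error bounds then give $n^{-1}\sum_i r_i^2=O_P(p^{-2})$ if $d\ge1$ and $O_P(p^{-2\kappa})$ if $d=0$. The assumed growth of $p$ makes this $O_P(n^{-1})=o_P(n^{-\varphi})$. The same bound controls the passage between the integral and the Riemann sum for $\widehat\beta_t-\beta_t$, using the natural-spline interpolation properties.

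\emph{(2) Bias.} By the usual argument (the penalized estimator minimizes the criterion, so plugging in $\mathbf b_t$ gives an upper bound), the bias part contributes at most $\rho\,(\|\pi_{\beta_t}\|^2+\int(\beta_t^{(m)})^2)=O(\rho)$ to both the quadratic form and the penalty. Hence it is $O(n^{-\varphi})$.

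\emph{(3) Variance.} Conditionally on $X$, the noise term has expected squared norm $n^{-1}\sigma_\varepsilon^2(t)\operatorname{tr}(H_\rho^2)$, where $H_\rho=\frac{1}{np}\bm D_t^\top\bm D_t(\frac{1}{np}\bm D_t^\top\bm D_t+\rho\frac{p}{p_t}\bm A_{m,t})^{-1}$. The trace is bounded by $\sum_k \widehat\mu_k/(\widehat\mu_k+\rho)$ in terms of the eigenvalues $\widehat\mu_k$ of the $\bm A_{m,t}$-whitened empirical covariance. In Crambes et al.\ this trace is $O_P(\rho^{-1/(2d+\kappa+2m+1)})$. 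Substituting it into $n^{-1}\rho^{-1/(2d+\kappa+2m+1)}$ with $\rho\sim n^{-\varphi}$ gives exactly $n^{-\varphi}$, which is how $\varphi$ is defined.

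\emph{(4) Trace bound, the main obstacle.} Crambes et al.\ derive the trace bound from Hölder regularity of i.i.d.\ predictors. Here $\widehat\delta_{i,t}$ are not i.i.d., since they share $\bar X$ and $\widehat v_t$. I would write $\widehat\delta_{i,t}=\delta_{i,t}-(\bar X-\mu)+\ldots$, more precisely
\[
\widehat\delta_{i,t}(s)=\delta_{i,t}(s)-[\bar X(s)-\mu(s)]+v_t(s)[\bar X(t)-\mu(t)]-(\widehat v_t(s)-v_t(s))X_i^{\bar\mu}(t).
\]
This shows that $\widehat\Gamma_{\delta,t}$ is the i.i.d.\ empirical covariance of the $\delta_{i,t}$ plus a perturbation of rank at most $3$ with $O_P(n^{-1/2})$ operator norm. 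A finite-rank perturbation changes $\sum_k\widehat\mu_k/(\widehat\mu_k+\rho)$ by at most a constant, via eigenvalue interlacing. The i.i.d.\ part is then handled with Crambes et al.'s argument using Lemma~\ref{lem:eigenvaluedecay} for $\Gamma_{\delta,t}$ together with the kurtosis condition \eqref{eq:kurtosis-ass-X-adapt}, which is exactly the fourth-moment condition for the $\delta_t$ functionals. The remaining delicate point is that the Sobolev embedding constant in the penalty matrix must be controlled by $\E\|\widehat\delta_{i,t}\|_{d,\kappa}^2$ rather than an i.i.d.\ moment; Lemma~\ref{lem:deltaHoelderproperties} provides this. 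Combining (1)--(4) with Markov's inequality conditional on $X$ yields the claimed $O_P(n^{-\varphi})$.
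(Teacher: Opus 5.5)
Your proposal is correct, but it takes a different route from the paper.

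\textbf{The paper's route.} The paper never reopens the Crambes--Kneip--Sarda analysis. It applies their Theorem~2 as a black box to the regression of $\widetilde Y_i^{\bar\mu}(t)$ on $\widehat\delta_{i,t}$, justified by the remark that those bounds are conditional on the design and do not require independent predictor curves. It then checks the hypotheses directly for the dependent curves $\widehat\delta_{i,t}$: H\"older continuity in probability from Lemma~\ref{lem:deltaHoelderproperties}; the sup-norm approximation condition via Lemma~1 of \citet{crambes2009} and a modulus-of-continuity bound for $\widehat\delta_{i,t}^{(d)}$ built on Lemma~\ref{lem:studentized}; and the grid and smoothing-parameter conditions.

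\textbf{Your route.} You redo the conditional bias/variance/discretization decomposition yourself. You isolate the only place where the distribution of the design enters, namely the effective dimension $\sum_k\widehat\mu_k/(\widehat\mu_k+\rho)$. You then transfer that bound from the i.i.d.\ curves $\delta_{i,t}$ by a finite-rank comparison.

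\textbf{What each buys.} Your route gives an explicit reason why the shared estimates $\bar X$ and $\widehat v_t$ are harmless, instead of relying on a claim about the scope of a cited theorem. The price is that you must carry out the Riemann-sum and spline-interpolation steps yourself. That includes showing $\int_0^t(\widehat\beta_t^{(m)})^2=O_P(1)$ and having H\"older-moment bounds for $\widehat\delta_{i,t}$ that are uniform in $n$; the proof of Lemma~\ref{lem:deltaHoelderproperties} supplies the latter.

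\textbf{Sharpening the comparison.} The computation in the proof of Lemma~\ref{lem:consistency.aux2} gives $\widehat\sigma_{\delta,t}(r,s)-\widetilde\sigma_{\delta,t}(r,s)=-\widehat\sigma_X(t,t)(\widehat v_t(r)-v_t(r))(\widehat v_t(s)-v_t(s))-\bar\delta_t(r)\bar\delta_t(s)$. This kernel has rank at most two and is negative semidefinite. Equivalently, empirical residualization on $(1,X_i(t))$ minimizes the sum of squares, so $\langle f,\widehat\Gamma_{\delta,t}f\rangle\le\langle f,\widetilde\Gamma_{\delta,t}f\rangle$, and the same holds for the discretized forms. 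Hence the feasible Gram matrix $\frac{1}{np}\bm D_t^\top\bm D_t$ is Loewner-dominated by its infeasible counterpart, and the effective dimension is bounded monotonically with no interlacing needed.

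\textbf{Two smaller remarks.} First, the kurtosis condition \eqref{eq:kurtosis-ass-X-adapt} plays no role in this lemma: the trace bound only needs an expectation bound on eigenvalue tail sums. In the paper that condition enters only in Theorem~\ref{thm:consistency}(a). Second, if you quote the i.i.d.\ trace bound of \citet{crambes2009} verbatim, their hypothesis is a sup-norm approximation condition, not the $L^2$ eigenvalue decay of Lemma~\ref{lem:eigenvaluedecay}. That condition holds for $\delta_{i,t}$ by the same modulus-of-continuity argument the paper uses. Alternatively, you can rederive the trace bound directly from the eigenvalue decay, which reproduces the exponent in $\varphi$ exactly.
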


\begin{proof}
We apply Theorem~2 of \citet{crambes2009} to the scalar-on-function regression
with response $\widetilde Y_i^{\bar\mu}(t)$ and predictor
$\widehat\delta_{i,t}$, $i=1,\ldots,n$.
Recall from Section 3 of the main paper that using the observable response
$Y_i^{\bar\mu}(t)$ in the smoothing-spline criterion yields the same minimizer as
using the infeasible response $\widetilde Y_i^{\bar\mu}(t)$, because
$\sum_{i=1}^n X_i^{\bar\mu}(t)\widehat\delta_{i,t}(s)=0$ for all $s\in[0,1]$.
Thus the estimator $\widehat\beta_t$ can be analyzed as the smoothing-spline
estimator in the regression
$
    \widetilde Y_i^{\bar\mu}(t)
    =
    \langle \beta_t,\widehat\delta_{i,t}\rangle
    +
    \varepsilon_i^{\bar\mu}(t)
$.

The criterion in Section 3 of the main paper uses the Riemann-sum weight $1/p$ inherited from the original grid on $[0,1]$, rather than $1/p_t$.
For fixed $t>0$, however, $p_t/p\to t$, so this change only affects multiplicative constants.
Equivalently, in the finite-dimensional representation the smoothing parameter is replaced by the parameter $\rho p/p_t$, which is of the same order as $\rho$.

Theorem~2 of \citet{crambes2009} is stated for a general random design of
functional predictors and does not require independence of the predictor curves.
Its finite-sample bounds are conditional on the design matrix, while the regularity
assumptions on the predictor process are used to control the discretization and
empirical approximation terms. We therefore verify the corresponding assumptions
for the estimated residualized predictors $\widehat\delta_{i,t}$.
In the notation adapted to the interval $[0,t]$, these conditions are:
\begin{itemize}
\item[] (CKS-1) \ $\beta_t$ is $m$-times differentiable and its $m$-th derivative belongs to $L^2[0,t]$.
\item[] (CKS-2) \ There exists some constant $\tilde \kappa$, $0 < \tilde \kappa \leq 1$, such that for every $\epsilon > 0$, there exists a constant $C_1 < \infty$ such that
\begin{align*}
	P(|\widehat \delta_{i,t}(r) - \widehat \delta_{i,t}(s)| \leq C_1 |r-s|^{\tilde \kappa}, \ \forall r,s \in [0,t]) \geq 1-\epsilon.
\end{align*}
\item[] (CKS-3) \ For some constants $\tilde q > 0$ and $C_2 < \infty$ and all $K \in \mathbb N$ there is a $K$-dimensional linear subspace $S_K$ of $L^2[0,t]$ with
$$
	E\bigg( \inf_{f \in \mathcal S_K} \sup_{s \in [0,t]} |\widehat \delta_{i,t}(s) - f(s)|^2 \bigg) \leq C_2 K^{-2\tilde q}.
$$
\item[] (CKS-S) With $\rho_t:=\rho p/p_t$, we have $\rho_t\to0$ and $1/(n\rho_t)\to0$ as $n\to\infty$.
\item[] (CKS-G) $np_t^{-2 \tilde \kappa} = O(1)$ as $n,p \to \infty$.
\end{itemize}
Once these conditions are verified, Theorem~2 of \citet{crambes2009} gives
$$
    \langle \widehat\beta_t-\beta_t,
    \widehat\Gamma_{\delta,t}(\widehat\beta_t-\beta_t)\rangle
    =
    O_P(n^{-\varphi}),
$$
with $\varphi=(2d+\kappa+2m+1)/(2d+\kappa+2m+2)$.

(CKS-1) follows from Assumption A.3 of the main paper.
(CKS-2) follows with $\tilde \kappa = 1$ if $d \geq 1$ and  $\tilde \kappa = \kappa$ if $d = 0$ because $\widehat \delta_{i,t}$ has $C^{d,\kappa}[0,1]$-valued sample paths a.s.\ by Lemma 4.2 of the main paper.
That is, $\tilde \kappa = \max(\kappa, \min(1,d))$.
Specifically, if $d \geq 1$ then $\widehat \delta_{i,t}$ has $C^{0,1}[0,t]$-valued sample paths a.s., and, if $d = 0$, then $\widehat \delta_{i,t}$ has $C^{0,\kappa}[0,t]$-valued paths a.s.
Therefore $\widehat \delta_{i,t}$ has $C^{0,\tilde \kappa}[0,t]$-valued paths a.s.\ with the random Hölder constant
$$
	K = \sup_{\substack{r \neq s \\ r,s \in [0,t]}} \frac{|\widehat \delta_{i,t}(r) - \widehat \delta_{i,t}(s)|}{|r-s|^{\tilde \kappa}} \leq \|\widehat \delta_{i,t}\|_{0,\tilde \kappa}
$$
so that
$$
	|\widehat \delta_{i,t}(r) - \widehat \delta_{i,t}(s)| \leq K |r-s|^{\tilde \kappa} \qquad \text{for all} \ r,s \in [0,t], \quad \text{almost surely}.
$$
Since $K$ is finite almost surely, choose $C_1 < \infty$ such that $P(K \leq C_1) \geq 1-\epsilon$. Then,
\begin{align*}
	P(|\widehat \delta_{i,t}(r) - \widehat \delta_{i,t}(s)| \leq C_1 |r-s|^{\tilde \kappa}, \ \forall r,s \in [0,t])
	= P(K \leq C_1) \geq 1-\epsilon.
\end{align*}
To verify (CKS-3), we employ Lemma 1 of \cite{crambes2009}, which indicates that a sufficient condition for (CKS-3) with $\tilde q = d + \gamma$ is:
\begin{align} \label{eq:CKS3suffcond}
	\E\bigg(\sup_{|r-s| \leq h} |\widehat \delta^{(d)}_{i,t}(r) - \widehat \delta^{(d)}_{i,t}(s)|^2 \bigg) \leq C_2 h^{2\gamma}\qquad \text{for any} \ h > 0, \quad \text{for some} \ \gamma > 0.
\end{align}
Following the proof of Lemma 4.2 of the main paper, our predictor variable admits the representation $\widehat{\delta}_{i,t}(s) = X_i(s) - \bar X(s) - \frac{1}{n} \sum_{j=1}^n X_j(s) X_j^{\hat \sigma}(t) X_i^{\hat \sigma}(t)$ and has the $d$-th derivative
$$
	\widehat{\delta}_{i,t}^{(d)}(s)
= X_i^{(d)}(s) - \frac{1}{n} \sum_{j=1}^n X_j^{(d)}(s) - \frac{1}{n} \sum_{j=1}^n X_j^{(d)}(s) X_j^{\hat \sigma}(t) X_i^{\hat \sigma}(t).
$$
With inequality \eqref{eq:elementaryinequality},
\begin{align*}
	|\widehat{\delta}_{i,t}^{(d)}(r) - \widehat{\delta}_{i,t}^{(d)}(s)|^{2}  &\leq 3 |X_i^{(d)}(r) - X_i^{(d)}(s)|^{2} + \frac{3}{n} \sum_{j=1}^n |X_j^{(d)}(r) - X_j^{(d)}(s)|^{2} \\
	&\qquad + 3 |X_i^{\hat \sigma}(t)|^2 \Big| \frac{1}{n}\sum_{j=1}^n (X_j^{(d)}(r) - X_j^{(d)}(s))X_j^{\hat \sigma}(t) \Big|^{2},
\end{align*}
where Cauchy--Schwarz and the fact that
$n^{-1}\sum_{j=1}^n|X_j^{\hat\sigma}(t)|^2=1$ give
\begin{align*}
\left|\frac{1}{n}\sum_{j=1}^n
\big(X_j^{(d)}(r)-X_j^{(d)}(s)\big)
X_j^{\hat\sigma}(t)\right|^2
&\leq \frac{1}{n}\sum_{j=1}^n
\left|X_j^{(d)}(r)-X_j^{(d)}(s)\right|^2.
\end{align*}
Thus, by Hölder's inequality,
\begin{align*}
	&\E\bigg(\sup_{|r-s| \leq h} |\widehat \delta^{(d)}_{i,t}(r) - \widehat \delta^{(d)}_{i,t}(s)|^2 \bigg) \\
	&\leq 3 \E\bigg(\sup_{|r-s| \leq h} |X_i^{(d)}(r) - X_i^{(d)}(s)|^{2}\bigg) + \frac{3}{n} \sum_{j=1}^n \E\bigg(\sup_{|r-s| \leq h} |X_j^{(d)}(r) - X_j^{(d)}(s)|^{2}\bigg) \\
	&\qquad + \frac{3}{n}\sum_{j=1}^n  \E\bigg(\sup_{|r-s| \leq h} |X_j^{(d)}(r) - X_j^{(d)}(s)|^6 \bigg)^{1/3} \E\Big( |X_i^{\hat \sigma}(t)|^{3} \Big)^{2/3}.
\end{align*}
Because $X$ has uniformly bounded sixth moments, Lemma \ref{lem:studentized} implies $E( |X_i^{\hat \sigma}(t)|^{3}) < \infty$, and by the i.i.d.\ property from Assumption A.6 of the main paper,
\begin{align} \label{eq:CKS3.aux}
E\bigg(\sup_{|r-s| \leq h} |\widehat \delta^{(d)}_{i,t}(r) - \widehat \delta^{(d)}_{i,t}(s)|^2 \bigg) \leq \tilde C_2  \E\bigg(\sup_{|r-s| \leq h} |X^{(d)}(r) - X^{(d)}(s)|^{6}\bigg)^{1/3}
\end{align}
For the remaining term of \eqref{eq:CKS3.aux}, for any $h > 0$ and $r,s$ with $|r-s| \leq h$,
$$
	|X^{(d)}(r) - X^{(d)}(s)| \leq L |r-s|^\kappa \leq L h^\kappa,
$$
and
$$
	\sup_{|r-s| \leq h} |X^{(d)}(r) - X^{(d)}(s)| \leq L h^\kappa,
$$
where the random Hölder constant
$$
	L = \sup_{r \neq s} \frac{|X^{(d)}(r) - X^{(d)}(s)|}{|r-s|^{\kappa}} \leq \|X\|_{d,\kappa}
$$
satisfies $\E(L^6) < \infty$ by Assumption A.2 of the main paper.
Taking sixth powers and expectations gives
$$
	\E\bigg(\sup_{|r-s| \leq h} |X^{(d)}(r) - X^{(d)}(s)|^6 \bigg) \leq \E(L^6) h^{6\kappa}.
$$
Therefore, with \eqref{eq:CKS3.aux},
$$
	E\bigg(\sup_{|r-s| \leq h} |\widehat \delta^{(d)}_{i,t}(r) - \widehat \delta^{(d)}_{i,t}(s)|^2 \bigg) \leq \tilde C \E(L^6)^{1/3} h^{2\kappa},
$$
so condition \eqref{eq:CKS3suffcond} holds with $C_2 = \tilde C \E(L^6)^{1/3}$ and $\gamma = \kappa$.
Consequently, (CKS-3) holds with $\tilde q = d + \kappa$.
Finally, since $p/p_t=O(1)$ for fixed $t>0$, our condition on $\rho$ implies the (CKS-S) condition for $\rho_t=\rho p/p_t$.
Further, note that, for any fixed $t \in (0,1]$, our conditions imply that $p_t \geq c \cdot p$ for some constant $c$, uniformly in $n$, so that $n^{1/(2\kappa)}/p = O(1)$ implies the (CKS-G) condition.
\end{proof}

\begin{lemma} \label{lem:consistency.aux2}
	Let
	\begin{align*}
	\widehat \sigma_{\delta,t}(r,s)
	=\frac{1}{n}\sum_{i=1}^n
	\widehat \delta_{i,t}(r)\widehat \delta_{i,t}(s),\quad 
	\widetilde \sigma_{\delta,t}(r,s)
	=\frac{1}{n}\sum_{i=1}^n
	\delta_{i,t}(r)\delta_{i,t}(s),
	\end{align*}
	where
	\begin{align*}
	\widehat \delta_{i,t}(s)
	=X_i^{\bar \mu}(s)-\widehat v_t(s)X_i^{\bar \mu}(t),\quad
	\delta_{i,t}(s)
	=X_i^{\mu}(s)-v_t(s)X_i^{\mu}(t).
	\end{align*}
	Let $\widehat \Gamma_{\delta,t}$ and $\widetilde \Gamma_{\delta,t}$ be the corresponding integral operators. Then, for any $f \in L^2[0,1]$, under Assumptions A.1, A.2, and A.6 of the main paper,
$$
	\langle f, \widehat \Gamma_{\delta,t}(f) \rangle - \langle f, \widetilde \Gamma_{\delta,t}(f) \rangle = O_P(n^{-1})  \quad \text{as} \ n \to \infty.
$$
\end{lemma}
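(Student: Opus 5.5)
The plan is to reduce everything to scalar least squares for the fixed direction $f$, and then use the orthogonality $\E(X^\mu(t)\delta_t(s))=0$ to show that the difference is a squared projection of order $O_P(n^{-1})$.

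\textbf{Step 1: scalar reduction.} Define the scalars
\[
Z_i:=\langle f,X_i^\mu\rangle,\qquad U_i:=X_i^\mu(t),\qquad b_i:=\langle f,\delta_{i,t}\rangle=Z_i-\gamma_0U_i,\qquad \gamma_0:=\langle f,v_t\rangle .
\]
Let $a_i:=\langle f,\widehat\delta_{i,t}\rangle$. Then
\[
\langle f,\widehat\Gamma_{\delta,t}(f)\rangle=\frac1n\sum_i a_i^2,
\qquad
\langle f,\widetilde\Gamma_{\delta,t}(f)\rangle=\frac1n\sum_i b_i^2 .
\]
Since $X_i^{\bar\mu}=X_i^\mu-\overline{X^\mu}$, we get $a_i=(Z_i-\bar Z)-\widehat\gamma\,(U_i-\bar U)$ with $\widehat\gamma:=\langle f,\widehat v_t\rangle$. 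By Fubini,
\[
\widehat\gamma=\frac{\sum_i (Z_i-\bar Z)(U_i-\bar U)}{\sum_i (U_i-\bar U)^2},
\]
which is exactly the OLS slope of $Z_i$ on $U_i$ with an intercept.

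\textbf{Step 2: exact identity.} Step 1 shows that $n^{-1}\sum_i a_i^2$ is the OLS residual sum of squares. Hence
\[
\frac1n\sum_i a_i^2=\min_{c,\gamma}\frac1n\sum_i (Z_i-c-\gamma U_i)^2 .
\]
The quantity $n^{-1}\sum_i b_i^2$ is the same criterion evaluated at the particular choice $c=0$, $\gamma=\gamma_0$. Because $Z_i-c-\gamma U_i=b_i-c-(\gamma-\gamma_0)U_i$, the residual vector $(a_i)$ is the projection of $(b_i)$ onto the orthogonal complement of $\operatorname{span}\{\mathbf 1,(U_i)\}$ in $\mathbb R^n$. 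Decomposing this span as $\mathbf 1\oplus(U_i-\bar U)$, I obtain the exact identity
\[
\frac1n\sum_i b_i^2-\frac1n\sum_i a_i^2
=\bar b^{\,2}+\frac{\big(n^{-1}\sum_i b_i(U_i-\bar U)\big)^2}{n^{-1}\sum_i (U_i-\bar U)^2}\;\ge 0 .
\]

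\textbf{Step 3: stochastic orders.} It remains to bound each term on the right-hand side.

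The mean term: $\E b_i=0$ and $\E b_i^2=\langle f,\Gamma_{\delta,t}f\rangle<\infty$. By Chebyshev, $\bar b=O_P(n^{-1/2})$, so $\bar b^{\,2}=O_P(n^{-1})$.

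The cross term: the key fact is the population orthogonality $\E(b_iU_i)=\int f(s)\,\E(\delta_t(s)X^\mu(t))\,ds=0$. Then $n^{-1}\sum_i b_iU_i=O_P(n^{-1/2})$ by Chebyshev, provided $\E(b_i^2U_i^2)<\infty$. This fourth moment follows from A.2, via $\E\|X\|_{d,\kappa}^6<\infty$, Cauchy--Schwarz and $|b_i|\le \|f\|(\|X_i^\mu\|+|\gamma_0||U_i|)$. We also have $\bar b\,\bar U=O_P(n^{-1})$, so the numerator is $O_P(n^{-1})$.

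The denominator: it converges in probability to $\sigma_X(t,t)>0$ by the law of large numbers and the variance lower bound in A.2, so it is bounded away from zero with probability tending to one.

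Combining these three bounds gives the claim; in fact the difference is nonpositive. The i.i.d.\ structure in A.6 is used for the Chebyshev and LLN steps, and A.1 guarantees that the point evaluations $U_i$ are well defined.

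\textbf{Main obstacle.} The main step is Step 2: recognising that $\widehat v_t$ makes $a_i$ an exact OLS residual. This removes any need to expand $\widehat v_t-v_t$, which is only $O_P(n^{-1/2})$ and would otherwise seem to produce an $O_P(n^{-1/2})$ error. The only other point needing care is checking the fourth-moment condition on $b_iU_i$.
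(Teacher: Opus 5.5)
Your proof is correct, and its central exact identity is the one the paper uses. Since $\widehat\gamma-\gamma_0=\big(n^{-1}\sum_i b_i(U_i-\bar U)\big)/\widehat\sigma_X(t,t)$ and $\bar b=\langle f,\bar\delta_t\rangle$ with $\bar\delta_t=\bar X^\mu-v_t\bar X^\mu(t)$, your formula is precisely
\[
\langle f,\widehat\Gamma_{\delta,t}(f)\rangle-\langle f,\widetilde\Gamma_{\delta,t}(f)\rangle=-\widehat\sigma_X(t,t)\langle f,\widehat v_t-v_t\rangle^2-\langle f,\bar\delta_t\rangle^2 .
\]
The paper obtains this by expanding both kernels and using $\widehat\sigma_X(r,t)=\widehat\sigma_X(t,t)\widehat v_t(r)$.

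The two routes differ in how the identity is derived and in the stochastic bound. Your OLS-residual interpretation shows transparently why no $O_P(n^{-1/2})$ cross term appears, and it gives the sign of the difference for free. You then control the two scalar terms for fixed $f$ by Chebyshev, using the orthogonality $\E(b_iU_i)=0$.

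The paper instead bounds the kernel difference in Hilbert--Schmidt norm through $\|\widehat v_t-v_t\|=O_P(n^{-1/2})$ and $\|\bar\delta_t\|=O_P(n^{-1/2})$. This yields $\|\widehat\Gamma_{\delta,t}-\widetilde\Gamma_{\delta,t}\|_{\mathcal S}=O_P(n^{-1})$, a bound uniform over $\|f\|\le 1$. That uniformity is what is actually needed downstream. In the proof of Theorem~\ref{thm:consistency}(a) the lemma is applied to the random, sample-dependent direction $f_n=\widehat\beta_t-\beta_t$, and your fixed-$f$ Chebyshev step does not cover that case.

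Your identity handles the random-direction case just as well with three changes:
\begin{enumerate}
\item Write $n^{-1}\sum_i b_i(U_i-\bar U)=\langle f,n^{-1}\sum_i\delta_{i,t}(U_i-\bar U)\rangle$.
\item Show $\|n^{-1}\sum_i\delta_{i,t}X_i^\mu(t)\|=O_P(n^{-1/2})$ by the $L^2[0,1]$-valued version of your Chebyshev argument. It uses the same orthogonality, together with $\E(\|\delta_t\|^2X^\mu(t)^2)<\infty$.
\item Apply Cauchy--Schwarz to get a bound of the form $\|f\|^2\,O_P(n^{-1})$.
\end{enumerate}

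A small slip: the moment bound should read $|b_i|\le\|f\|\,\|X_i^\mu\|+|\gamma_0|\,|U_i|$. This does not affect the argument.
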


\begin{proof}
We first derive an identity for the difference between the two covariance kernels.
Since $\widehat\sigma_X(r,t)=\widehat\sigma_X(t,t)\widehat v_t(r)$, the feasible kernel is
$$
\widehat\sigma_{\delta,t}(r,s)
=
\widehat\sigma_X(r,s)
-
\widehat\sigma_X(t,t)\widehat v_t(r)\widehat v_t(s).
$$
For the infeasible kernel, write $\bar X^\mu(s)=\bar X(s)-\E(X(s))$ and $\bar\delta_t(s)=\bar X^\mu(s)-v_t(s)\bar X^\mu(t)$, and note that $\widetilde\sigma_X(r,s)=\widehat\sigma_X(r,s)+\bar X^\mu(r)\bar X^\mu(s)$.
Then,
\begin{align*}
\widetilde\sigma_{\delta,t}(r,s)
&=
\widetilde\sigma_X(r,s)
-
v_t(s)\widetilde\sigma_X(r,t)
-
v_t(r)\widetilde\sigma_X(s,t)
+
v_t(r)v_t(s)\widetilde\sigma_X(t,t) \\
&=
\widehat\sigma_X(r,s)
-
v_t(s)\widehat\sigma_X(r,t)
-
v_t(r)\widehat\sigma_X(s,t)
+
v_t(r)v_t(s)\widehat\sigma_X(t,t)  \\
&\quad
+
\big( \bar X^\mu(r)-v_t(r)\bar X^\mu(t) \big)
\big( \bar X^\mu(s)-v_t(s)\bar X^\mu(t) \big) \\
&=
\widehat\sigma_X(r,s)
-
\widehat\sigma_X(t,t)v_t(s)\widehat v_t(r)
-
\widehat\sigma_X(t,t)v_t(r)\widehat v_t(s)
+
\widehat\sigma_X(t,t)v_t(r)v_t(s) 
+
\bar\delta_t(r)\bar\delta_t(s).
\end{align*}
Subtracting the last display from the expression for
$\widehat\sigma_{\delta,t}(r,s)$ gives
\begin{align*}
\widehat\sigma_{\delta,t}(r,s)-\widetilde\sigma_{\delta,t}(r,s)
&=
-\widehat\sigma_X(t,t)
\big(\widehat v_t(r)\widehat v_t(s)
-v_t(s)\widehat v_t(r)
-v_t(r)\widehat v_t(s)
+v_t(r)v_t(s)\big) 
-\bar\delta_t(r)\bar\delta_t(s) \\
&=
-\widehat\sigma_X(t,t)
\big(\widehat v_t(r)-v_t(r)\big)
\big(\widehat v_t(s)-v_t(s)\big)
-
\bar\delta_t(r)\bar\delta_t(s).
\end{align*}
It follows that
$$
\|\widehat\Gamma_{\delta,t}-\widetilde\Gamma_{\delta,t}\|_{\mathcal S}
\leq
|\widehat\sigma_X(t,t)|\,\|\widehat v_t-v_t\|^2
+
\|\bar\delta_t\|^2,
$$
where $\|\cdot\|_{\mathcal S}$ denotes the Hilbert--Schmidt norm.

By the i.i.d.\ property and since $X$ has uniformly bounded fourth moments, standard law of large numbers arguments imply $\|\bar X - \E(X)\| = O_P(n^{-1/2})$ and $\|\widehat \Gamma_X - \Gamma_X\|_{\mathcal S} = O_P(n^{-1/2})$.
Hence $\widehat\sigma_X(t,t)=O_P(1)$ and
$\widehat\sigma_X(t,t)-\sigma_X(t,t)=O_P(n^{-1/2})$. Since
$\sigma_X(t,t)>0$, we obtain
$
\|\widehat v_t-v_t\|=O_P(n^{-1/2})
$.
Moreover,
$$
\|\bar\delta_t\|
\leq
\|\bar X^\mu\|+|\bar X^\mu(t)|\,\|v_t\|
=
O_P(n^{-1/2}),
$$
because $\|v_t\|<\infty$ for fixed $t$. Therefore,
$$
\|\widehat\Gamma_{\delta,t}-\widetilde\Gamma_{\delta,t}\|_{\mathcal S}
=
O_P(n^{-1}).
$$
Finally, by Cauchy--Schwarz,
$$
|
\langle f,\widehat\Gamma_{\delta,t}(f)\rangle
-
\langle f,\widetilde\Gamma_{\delta,t}(f)\rangle
|
\leq
\|f\|^2
\|\widehat\Gamma_{\delta,t}-\widetilde\Gamma_{\delta,t}\|_{\mathcal S}
=
O_P(n^{-1})
$$
for every fixed $f\in L^2[0,1]$.
\end{proof}

\subsection{Proof of Theorem 4.3(a) of the main paper}

\begin{proof}
Let $f_n=\widehat\beta_t-\beta_t$.
From Lemmas~\ref{lem:consistency.aux1} and \ref{lem:consistency.aux2}, together with $\|f_n\|=O_P(1)$, it follows that
\begin{align} \label{eq:betaconsistency.aux1}
	A_n
	:=
	\langle f_n,\widetilde\Gamma_{\delta,t}(f_n)\rangle
	=
	O_P(n^{-\varphi}),
\end{align}
where $\widetilde\Gamma_{\delta,t}$ is the integral operator with kernel
$$
	\widetilde \sigma_{\delta,t}(r,s)
	=
	\frac{1}{n}\sum_{i=1}^n \delta_{i,t}(r)\delta_{i,t}(s),
	\qquad
	\delta_{i,t}(s)=X_i^\mu(s)-X_i^\mu(t)v_t(s).
$$
Let $(\lambda_{\delta,t,k},\psi_{\delta,t,k})_{k\in\mathbb N}$ denote the positive
eigenvalues and corresponding orthonormal eigenfunctions of $\Gamma_{\delta,t}$,
arranged in non-increasing order. Define
$$
	b_{t,k}:=\langle f_n,\psi_{\delta,t,k}\rangle,
	\qquad
	\zeta_{i,t,k}:=\langle \delta_{i,t},\psi_{\delta,t,k}\rangle .
$$
Then
$$
	\E(\zeta_{i,t,k})=0,
	\qquad
	\E(\zeta_{i,t,k}\zeta_{i,t,l})
	=
	\lambda_{\delta,t,k}\mathbbm 1_{\{k=l\}},
$$
and
\begin{align} \label{eq:betaconsistency.aux2}
	B_n
	:=
	\langle f_n,\Gamma_{\delta,t}(f_n)\rangle
	=
	\sum_{k=1}^\infty b_{t,k}^2\lambda_{\delta,t,k}.
\end{align}
Moreover,
$$
	A_n
	=
	\frac{1}{n}\sum_{i=1}^n
	\bigg(\sum_{k=1}^\infty b_{t,k}\zeta_{i,t,k}\bigg)^2.
$$
Therefore,
\begin{align} \label{eq:betaconsistency.aux3}
	A_n-B_n
	&=
	\sum_{k=1}^\infty\sum_{l=1}^\infty
	b_{t,k}b_{t,l}
	\bigg(
	\frac{1}{n}\sum_{i=1}^n\zeta_{i,t,k}\zeta_{i,t,l}
	-
	\lambda_{\delta,t,k}\mathbbm 1_{\{k=l\}}
	\bigg).
\end{align}
We next control the right-hand side of \eqref{eq:betaconsistency.aux3}. For $k\leq l$, set
$$
	\xi_{t,k,l}
	=
	\frac{1}{\sqrt n(\lambda_{\delta,t,k}\lambda_{\delta,t,l})^{1/2}}
	\sum_{i=1}^n
	\Big(
	\zeta_{i,t,k}\zeta_{i,t,l}
	-
	\lambda_{\delta,t,k}\mathbbm 1_{\{k=l\}}
	\Big).
$$
By Assumption A.2 of the main paper, applied with
$g=\psi_{\delta,t,k}$ and $h=\psi_{\delta,t,l}$, we have
$$
	\E(\zeta_{i,t,k}^2\zeta_{i,t,l}^2)
	\leq
	C\lambda_{\delta,t,k}\lambda_{\delta,t,l}.
$$
Consequently,
$$
	\sup_{k\leq l}\E(\xi_{t,k,l}^2)<\infty.
$$
Using symmetry of the double sum in \eqref{eq:betaconsistency.aux3}, we obtain
\begin{align} \label{eq:betaconsistency.aux4}
	|A_n-B_n|
	&\leq
	\frac{2}{\sqrt n}
	\bigg|
	\sum_{k=1}^\infty\sum_{l=k}^\infty
	b_{t,k}b_{t,l}
	(\lambda_{\delta,t,k}\lambda_{\delta,t,l})^{1/2}
	\xi_{t,k,l}
	\bigg|.
\end{align}
Let $K_n=\lfloor n^{1/2}\rfloor$. Splitting the double sum into the parts
$k\leq K_n$ and $k>K_n$ and applying Cauchy-Schwarz gives
\begin{align}
	|A_n-B_n|
	&\leq
	\frac{2}{\sqrt n}
	\bigg(
	\sum_{k=1}^{K_n}\sum_{l=k}^\infty
	\lambda_{\delta,t,k}b_{t,k}^2b_{t,l}^2
	\bigg)^{1/2}
	\bigg(
	\sum_{k=1}^{K_n}\sum_{l=k}^\infty
	\lambda_{\delta,t,l}\xi_{t,k,l}^2
	\bigg)^{1/2} \nonumber \\
	&\quad+
	\frac{2}{\sqrt n}
	\bigg(
	\sum_{k=K_n+1}^\infty\sum_{l=k}^\infty
	b_{t,k}^2b_{t,l}^2
	\bigg)^{1/2}
	\bigg(
	\sum_{k=K_n+1}^\infty\sum_{l=k}^\infty
	\lambda_{\delta,t,k}\lambda_{\delta,t,l}\xi_{t,k,l}^2
	\bigg)^{1/2}.
	\label{eq:betaconsistency.aux5}
\end{align}
For the first factor in the first term of
\eqref{eq:betaconsistency.aux5}, we have
$$
	\sum_{k=1}^{K_n}\sum_{l=k}^\infty
	\lambda_{\delta,t,k}b_{t,k}^2b_{t,l}^2
	\leq
	\bigg(\sum_{k=1}^\infty \lambda_{\delta,t,k}b_{t,k}^2\bigg)
	\bigg(\sum_{l=1}^\infty b_{t,l}^2\bigg)
	=
	B_n\|f_n\|^2.
$$
For the second factor, the uniform second-moment bound for $\xi_{t,k,l}$ yields
$$
	\E\bigg(
	\sum_{k=1}^{K_n}\sum_{l=k}^\infty
	\lambda_{\delta,t,l}\xi_{t,k,l}^2
	\bigg)
	\leq
	C\sum_{k=1}^{K_n}\sum_{l=k}^\infty \lambda_{\delta,t,l}.
$$
Since $\lambda_{\delta,t,l}\leq \lambda_l$ and $\lambda_l=O(l^{-2})$ by
Assumption A.5 of the main paper,
$$
	\sum_{k=1}^{K_n}\sum_{l=k}^\infty \lambda_{\delta,t,l}
	\leq
	C\sum_{k=1}^{K_n}\sum_{l=k}^\infty l^{-2}
	=
	O(\log(K_n))
	=
	O(\log(n)).
$$
Hence
$$
	\bigg(
	\sum_{k=1}^{K_n}\sum_{l=k}^\infty
	\lambda_{\delta,t,l}\xi_{t,k,l}^2
	\bigg)^{1/2}
	=
	O_P((\log(n))^{1/2}).
$$
Since $\|f_n\|=O_P(1)$, the first term in \eqref{eq:betaconsistency.aux5} is therefore
\begin{align} \label{eq:betaconsistency.lowfreq}
	O_P\Big(n^{-1/2}B_n^{1/2}(\log(n))^{1/2}\Big).
\end{align}
For the second term in \eqref{eq:betaconsistency.aux5}, we have
$$
	\sum_{k=K_n+1}^\infty\sum_{l=k}^\infty b_{t,k}^2b_{t,l}^2
	\leq
	\|f_n\|^4
	=
	O_P(1).
$$
Moreover,
\begin{align*}
	\E\bigg(
	\sum_{k=K_n+1}^\infty\sum_{l=k}^\infty
	\lambda_{\delta,t,k}\lambda_{\delta,t,l}\xi_{t,k,l}^2
	\bigg)
	&\leq
	C\sum_{k=K_n+1}^\infty\sum_{l=k}^\infty
	\lambda_{\delta,t,k}\lambda_{\delta,t,l}
	\leq
	C\bigg(\sum_{k=K_n+1}^\infty\lambda_{\delta,t,k}\bigg)^2.
\end{align*}
Again using $\lambda_{\delta,t,k}\leq\lambda_k=O(k^{-2})$, we get
$$
	\sum_{k=K_n+1}^\infty\lambda_{\delta,t,k}
	=
	O(K_n^{-1}).
$$
Thus the second term in \eqref{eq:betaconsistency.aux5} is
\begin{align} \label{eq:betaconsistency.highfreq}
	O_P(n^{-1/2}K_n^{-1})
	=
	O_P(n^{-1}).
\end{align}
Combining \eqref{eq:betaconsistency.aux5},
\eqref{eq:betaconsistency.lowfreq}, and \eqref{eq:betaconsistency.highfreq}, we obtain
\begin{align} \label{eq:betaconsistency.transfer}
	|A_n-B_n|
	=
	O_P\Big(n^{-1/2}B_n^{1/2}(\log(n))^{1/2}\Big)
	+
	O_P(n^{-1}).
\end{align}
Since $A_n=O_P(n^{-\varphi})$ by \eqref{eq:betaconsistency.aux1}, it follows that
$$
	B_n
	\leq
	A_n+|A_n-B_n|
	=
	O_P(n^{-\varphi})
	+
	O_P\Big(n^{-1/2}B_n^{1/2}(\log(n))^{1/2}\Big)
	+
	O_P(n^{-1}).
$$
Using $2ab\leq a^2+b^2$, the middle term can be absorbed into the left-hand side:
$$
	O_P\Big(n^{-1/2}B_n^{1/2}(\log(n))^{1/2}\Big)
	\leq
	\frac12 B_n + O_P(n^{-1}\log(n)).
$$
Consequently,
$$
	B_n
	=
	O_P(n^{-\varphi})+O_P(n^{-1}\log(n)).
$$
Since $\varphi<1$, we have $n^{-1}\log(n)=o(n^{-\varphi})$. Therefore,
$$
	\langle \widehat\beta_t-\beta_t,
	\Gamma_{\delta,t}(\widehat\beta_t-\beta_t)\rangle
	=
	B_n
	=
	O_P(n^{-\varphi}),
$$
which proves the assertion.
\end{proof}

\subsection{Proof of Theorem 4.3(b) of the main paper}

The difference of the parameter and the estimator of interest can be decomposed as
\begin{align*}
	\widehat \alpha(t) - \alpha(t)
	&= \widehat \alpha^\star(t) - \langle \widehat \beta_t, \widehat v_t \rangle - \alpha^\star(t) + \langle \beta_t, v_t \rangle  \\
	&= (\widehat \alpha^\star(t) - \alpha^\star(t)) + \langle \widehat \beta_t, v_t - \widehat v_t \rangle + \langle \beta_t - \widehat \beta_t, v_t \rangle.
\end{align*}
It is easy to verify that $\widehat \alpha^\star(t) - \alpha^\star(t)$ and $\langle \widehat \beta_t, v_t - \widehat v_t \rangle$ are $O_P(n^{-1/2})$ since $X_1, \ldots X_n$ are i.i.d.\ and have enough bounded moments.
Therefore, it remains to show that $\langle \beta_t - \widehat \beta_t, v_t \rangle = O_P(n^{-\varphi/(2\tau)})$.
Define $b_{j,t} := \langle \beta_t - \widehat \beta_t, \psi_j \rangle$ and $\tilde b_{j,t} := b_{j,t} - \langle \beta_t - \widehat \beta_t, v_t \rangle \psi_j(t)$.
Then, for any $k \in \mathbb N$ with $\sum_{j=1}^k (\psi_j(t))^2 > 0$, we have
\begin{align}
	\langle \beta_t - \widehat \beta_t, v_t \rangle^2
	&= \frac{\sum_{j=1}^{k} \langle \beta_t - \widehat \beta_t, v_t \rangle^2 (\psi_j(t))^2}{\sum_{j=1}^{k} (\psi_j(t))^2} \nonumber \\
	&= \frac{\sum_{j=1}^k (\tilde b_{j,t} - b_{j,t})^2}{\sum_{j=1}^k (\psi_j(t))^2} \leq \frac{\Big(\sqrt{\sum_{j=1}^{k} \tilde b_{j,t}^2 } + \sqrt{\sum_{j=1}^{k} b_{j,t}^2 }\,\Big)^2}{\sum_{j=1}^{k} (\psi_j(t))^2},  \label{eq:thm3aux1}
\end{align}
which follows from the Cauchy-Schwarz inequality.
Three terms appear in the right-hand side expression of \eqref{eq:thm3aux1}.
For the first term, note that
$$
	\sum_{j=1}^\infty \lambda_j \psi_j(t)^2 = \sigma_X(t,t),
	\quad \langle \beta_t - \widehat \beta_t, v_j \rangle = \frac{\sum_{j=1}^\infty \lambda_j \psi_j(t) b_{j,t}}{\sigma_X(t,t)},
$$
and
$$
	\langle \beta_t - \widehat \beta_t, \Gamma_X(\beta_t - \widehat \beta_t) \rangle = \sum_{j=1}^\infty \lambda_j b_{j,t}^2
$$
which implies
\begin{align*}
	\sum_{j=1}^\infty \lambda_{j} \tilde b_{j,t}^2
	&= \sum_{j=1}^\infty \lambda_{j} b_{j,t}^2 - 2 \langle \beta_t - \widehat \beta_t, v_t \rangle \sum_{j=1}^\infty \lambda_{j} b_{j,t} \psi_j(t) + \langle \beta_t - \widehat \beta_t, v_t \rangle^2 \sum_{j=1}^\infty \lambda_{j} (\psi_j(t))^2 \\
	&= \langle \beta_t - \widehat \beta_t, \Gamma_X(\beta_t - \widehat \beta_t) \rangle - \langle \beta_t - \widehat \beta_t, v_t \rangle^2 \sigma_X(t,t)  \\
	&= \langle \beta_t - \widehat \beta_t, \Gamma_{\delta,t}(\beta_t - \widehat \beta_t) \rangle = O_P(n^{-\varphi}),
\end{align*}
by Theorem 4.3(a) of the main paper and the fact that $\langle f, \Gamma_{\delta,t}(f) \rangle = \langle f, \Gamma_X(f)\rangle - \sigma_X(t,t) \langle f, v_t \rangle^2$ for any $f \in L^2[0,1]$.
Therefore, by Assumption A.5 of the main paper,
\begin{align*}
	\sum_{j=1}^k \tilde b_{j,t}^2
	\leq  \lambda_{k}^{-1} \sum_{j=1}^k \lambda_{j} \tilde b_{j,t}^2
	 \leq \lambda_{k}^{-1} \sum_{j=1}^\infty \lambda_{j} \tilde b_{j,t}^2 = \lambda_{k}^{-1} \langle \beta_t - \widehat \beta_t, \Gamma_{\delta,t}(\beta_t - \widehat \beta_t) \rangle = O_P(k^\tau n^{-\varphi}).
\end{align*}
Setting $k$ proportional to $n^{\varphi/\tau}$ yields $\sum_{j=1}^k \tilde b_{j,t}^2 = O_P(1)$.
For the second term of \eqref{eq:thm3aux1}, because $(\psi_j)_{j \in \mathbb N}$ is an orthonormal sequence, by Bessel's inequality,
$$
\sum_{j=1}^k b_{j,t}^2 \leq \sum_{j=1}^\infty b_{j,t}^2
\leq \|\beta_t - \widehat \beta_t\|^2 = O_P(1).
$$
Finally, for the third term of \eqref{eq:thm3aux1}, with $k$ proportional to $n^{\varphi/\tau}$, by Assumption A.5 of the main paper, we obtain
\begin{align*}
	\bigg(\sum_{j=1}^{k} \psi_j(t)^2\bigg)^{-1} \leq C \frac{1}{k} = O(n^{-\varphi/\tau}).
\end{align*}
Therefore, $\langle \beta_t - \widehat \beta_t, v_t \rangle = O_P(n^{-\varphi/(2\tau)})$, and the assertion follows.
\hfill$\square$

\subsection{Proof of Theorem 4.4(a) of the main paper}

First note that $\alpha(t)=\widetilde\alpha^\star(t)-\langle \widehat v_t,\beta_t\rangle$ and $\widehat\alpha(t)=\widehat\alpha^\star(t)-\langle \widehat v_t,\widehat\beta_t\rangle$. With
$$
R_1(X_i):=\widetilde\alpha^\star(t)(X_i(t)-\E(X(t)))-
\widehat\alpha^\star(t) X_i^{\bar \mu}(t),
$$
$d_X(t) := \bar X(t) - \E(X(t)) =O_p(n^{-1/2})$ and
the fact that $\widehat \alpha^\star(t) - \alpha^\star(t) = O_P(n^{-1/2})$ by straightforward arguments, we get
$$
\frac{1}{n} \sum_{i=1}^nR_1(X_i)^2=O_p(n^{-1}).
$$
Furthermore,
$R_2:=\E(Y(t))-\bar Y(t)=O_P(n^{-1/2})$, $R_3:=\langle \beta_t,d_X \rangle=O_P(n^{-1/2})$, and
$R_4:=\langle \widehat v_t,\beta_t\rangle d_X(t)=O_P(n^{-1/2})$. Therefore,
\begin{align*}
&	\frac{1}{n} \sum_{i=1}^n \left( \alpha_0(t)+ \alpha(t)X_i(t)+\langle X_i, \beta_t\rangle-\widehat\alpha_0(t)-\widehat\alpha(t)X_i(t)
-\langle X_i, \widehat\beta_t\rangle\right)^2\\
&=\frac{1}{n} \sum_{i=1}^n \left(\langle \beta_t-\widehat\beta_t,\widehat \delta_{i,t}\rangle+R_1(X_i)+R_2+R_3-R_4\right)^2
\\
&= \frac{1}{n} \sum_{i=1}^n \langle \beta_t-\widehat\beta_t,\widehat \delta_{i,t}\rangle^2+O_P\bigg(n^{-1/2}\Big(\frac{1}{n} \sum_{i=1}^n \langle \beta_t-\widehat\beta_t,\widehat \delta_{i,t}\rangle^2\Big)^{1/2}+n^{-1} \bigg),
\end{align*}
and, with
$$
	\frac{1}{n} \sum_{i=1}^n \langle \beta_t-\widehat\beta_t,\widehat \delta_{i,t}\rangle^2 = \langle \beta_t-\widehat\beta_t, \widehat \Gamma_{\delta,t}(\beta_t-\widehat\beta_t) \rangle,
$$
the assertion follows from Lemma \ref{lem:consistency.aux1}.
\hfill$\square$

\subsection{Proof of Theorem 4.4(b) of the main paper}

Define $\widetilde \beta_t := \beta_t - \widehat \beta_t$. The expression of interest is:
\begin{align}
	&m_t(X_{n+1}) -\widehat\alpha_0(t)-\widehat\alpha(t)X_{n+1}(t) -\langle \widehat\beta_t , X_{n+1} \rangle \nonumber \\
	&= [\alpha_0(t) - \widehat \alpha_0(t)] + [\alpha(t) - \widehat \alpha(t)] X_{n+1}(t) + \langle \widetilde \beta_t, X_{n+1} \rangle. \label{eq:prederror.aux1}
\end{align}
With $d_X := \bar X - \E(X)$, $d_Y := \bar Y - \E(Y)$ and by the definition of $\alpha_0$ and $\widehat \alpha_0$ we have
\begin{align*}
	[\alpha_0(t) - \widehat \alpha_0(t)] = \widehat \alpha(t) d_X(t) + \langle \widehat \beta_t, d_X \rangle - d_Y(t) - [\alpha(t) - \widehat \alpha(t)] \E(X(t)) - \langle \widetilde \beta_t, \E(X) \rangle.
\end{align*}
Therefore, with $X_{n+1}^{\mu} := X_{n+1} - E(X)$, \eqref{eq:prederror.aux1} becomes
\begin{align} \label{eq:prederror.aux2}
	[\alpha(t) - \widehat \alpha(t)] X_{n+1}^{\mu}(t) + \langle \widetilde \beta_t, X_{n+1}^{\mu} \rangle + \widehat \alpha(t) d_X(t) + \langle \widehat \beta_t, d_X \rangle - d_Y(t).
\end{align}
To rearrange \eqref{eq:prederror.aux2} further, note that
$\alpha(t)=\widetilde\alpha^\star(t)-\langle \beta_t, \widehat v_t\rangle$ and $\widehat\alpha(t)=\widehat\alpha^\star(t)-\langle \widehat\beta_t, \widehat v_t\rangle$, which implies
$$
	[\alpha(t) - \widehat \alpha(t)] X_{n+1}^{\mu}(t) + \langle \widetilde \beta_t, X_{n+1}^{\mu} \rangle
	= \langle \widetilde \beta_t, X_{n+1}^\mu - X_{n+1}^\mu(t) \widehat v_t \rangle +
	[\widetilde \alpha^\star(t) - \widehat \alpha^\star(t)] X_{n+1}^\mu(t),
$$
and \eqref{eq:prederror.aux2} becomes
$$
	\langle \widetilde \beta_t, X_{n+1}^\mu - X_{n+1}^\mu(t) \widehat v_t \rangle +
	[\widetilde \alpha^\star(t) - \widehat \alpha^\star(t)] X_{n+1}^\mu(t) + \widehat \alpha(t) d_X(t) + \langle \widehat \beta_t, d_X \rangle - d_Y(t).
$$
Therefore, by the elementary inequality \eqref{eq:elementaryinequality},
\begin{align*}
	&\frac{1}{16} \E\Big(\big(m_t(X_{n+1}) -\widehat\alpha_0(t)-\widehat\alpha(t)X_{n+1}(t) -\langle \widehat\beta_t , X_{n+1} \rangle\big)^2 \mid V_n\Big) \\
	&\leq \E\Big( \langle \widetilde \beta_t, X_{n+1}^\mu - X_{n+1}^\mu(t) \widehat v_t \rangle^2 \mid V_n \Big) + \E\Big( [\widetilde \alpha^\star(t) - \widehat \alpha^\star(t)]^2 X_{n+1}^\mu(t)^2 \mid V_n \Big) \\
	&\qquad  + \widehat \alpha(t)^2 d_X(t)^2 +  \langle \widehat \beta_t, d_X \rangle^2  +  d_Y(t)^2,
\end{align*}
where $V_n := \{(X_1, Y_1), \ldots, (X_n,Y_n)\}$.
By the i.i.d.\ property and the fact that the second moments are bounded, $\E(d_Y(t)^2) = O(n^{-1})$, $\E(d_X(t)^2) = O(n^{-1})$, and $\E(\|d_X\|^2) = O(n^{-1})$, so that the last three terms of the above expression are $O_P(n^{-1})$.
For the second term of the above expression, from the model representation
$$
Y_i^{\bar \mu}(t) = \widetilde \alpha^\star(t) X_i^{\bar \mu}(t) + \langle \beta_t, \widehat \delta_{i,t} \rangle + \varepsilon_i^{\bar \mu}(t),
$$
and the fact that $\sum_{i=1}^n X_i^{\bar \mu}(t) \widehat \delta_{i,t}(s) = 0$ for all $s\in[0,1]$, we have
\begin{align*}
\widehat \sigma_{XY}(t,t) =  \widetilde \alpha^\star(t) \widehat \sigma_X(t,t) + \frac{1}{n} \sum_{i=1}^n \varepsilon_i^{\bar \mu}(t) X_i^{\bar \mu}(t)
\end{align*}
which, together with $\widehat\alpha^\star(t) = \widehat \sigma_{XY}(t,t) / \widehat \sigma_X(t,t)$, implies
$$
	[\widetilde \alpha^\star(t) - \widehat \alpha^\star(t)]^2 X_{n+1}^\mu(t)^2  = \bigg( \frac{1}{n} \sum_{i=1}^n \varepsilon_i^{\bar \mu}(t) X_i^{\bar \mu}(t)\bigg)^2 \bigg(\frac{X_{n+1}^\mu(t)}{\widehat \sigma_X(t,t)}\bigg)^2
$$
Thus, since $X_{n+1}$ is independent of $V_n$,
\begin{align*}
\E\Big( [\widetilde \alpha^\star(t) - \widehat \alpha^\star(t)]^2 X_{n+1}^\mu(t)^2 \mid V_n \Big) = \bigg( \frac{1}{n} \sum_{i=1}^n \varepsilon_i^{\bar \mu}(t) X_i^{\bar \mu}(t) \bigg)^2
\frac{\sigma_X(t,t)}{\widehat \sigma_X(t,t)^2}.
\end{align*}
Moreover, with $\bar\varepsilon(t):=n^{-1}\sum_{i=1}^n\varepsilon_i(t)$ and $d_X(t)=\bar X(t)-\E(X(t))$,
\begin{align*}
\frac{1}{n} \sum_{i=1}^n \varepsilon_i^{\bar \mu}(t) X_i^{\bar \mu}(t)
=
\frac{1}{n} \sum_{i=1}^n \varepsilon_i(t) X_i^\mu(t)
-\bar\varepsilon(t)d_X(t).
\end{align*}
By Assumptions A.4 and A.6 of the main paper, the random variables $\varepsilon_i(t)X_i^\mu(t)$ are i.i.d.\ and centered. Furthermore, the model and Assumptions A.2--A.4 of the main paper imply $\E(\varepsilon(t)^2)<\infty$, and independence gives
$
\E(\varepsilon(t)^2X^\mu(t)^2)
=
\E(\varepsilon(t)^2)\E(X^\mu(t)^2)
<\infty.
$
Consequently,
$
\frac{1}{n} \sum_{i=1}^n \varepsilon_i(t)X_i^\mu(t)=O_P(n^{-1/2}).
$
Likewise, $\bar\varepsilon(t)=O_P(n^{-1/2})$ and $d_X(t)=O_P(n^{-1/2})$, so that
\begin{align*}
\frac{1}{n} \sum_{i=1}^n \varepsilon_i^{\bar \mu}(t)X_i^{\bar \mu}(t)
=O_P(n^{-1/2}).
\end{align*}
Finally, $|\widehat \sigma_X(t,t)- \sigma_X(t,t)| = O_P(n^{-1/2})$, and therefore
\begin{align*}
\E\Big( [\widetilde \alpha^\star(t) - \widehat \alpha^\star(t)]^2 X_{n+1}^\mu(t)^2 \mid V_n \Big)
=O_P(n^{-1}).
\end{align*}
It remains to show that
$$
\E\Big( \langle \widetilde \beta_t, X_{n+1}^\mu - X_{n+1}^\mu(t) \widehat v_t \rangle^2 \mid V_n\Big) = O_P(n^{-\varphi}).
$$
Let us first consider
\begin{align*}
	\E\Big(\langle \widetilde \beta_t,
	X_{n+1}^{\mu}-X_{n+1}^{\mu}(t)\widehat v_t\rangle^2
	\mid V_n\Big)
	&=\langle \widetilde \beta_t,
	\Gamma_X(\widetilde \beta_t)\rangle
	-2\langle \sigma_X(t,\cdot),\widetilde \beta_t\rangle
	\langle \widehat v_t,\widetilde \beta_t\rangle
	+\sigma_X(t,t)
	\langle \widehat v_t,\widetilde \beta_t\rangle^2.
\end{align*}
Since
$$
\langle \widetilde \beta_t , \Gamma_{\delta,t}(\widetilde \beta_t)\rangle=\langle \widetilde \beta_t, \Gamma_X(\widetilde \beta_t) \rangle
-\sigma_X(t,t) \langle v_t, \widetilde \beta_t\rangle^2
$$
and
\begin{align*}
	&\sigma_X(t,t)\langle v_t,\widetilde \beta_t\rangle^2
	-2\langle \sigma_X(t,\cdot),\widetilde \beta_t\rangle
	\langle \widehat v_t,\widetilde \beta_t\rangle
	+\sigma_X(t,t)
	\langle \widehat v_t,\widetilde \beta_t\rangle^2\\
	&=\sigma_X(t,t)
	\big(\langle v_t,\widetilde \beta_t\rangle
	-\langle \widehat v_t,\widetilde \beta_t\rangle\big)^2\\
	&=O_P(n^{-1}).
\end{align*}
we have
\begin{align*}
\E\Big(\langle \widetilde \beta_t,
X_{n+1}^{\mu}-X_{n+1}^{\mu}(t)\widehat v_t\rangle^2
\mid V_n\Big)
&=\langle \widetilde \beta_t,
\Gamma_{\delta,t}(\widetilde \beta_t)\rangle
+O_P(n^{-1}).
\end{align*}
Then, the assertion follows with Theorem 4.3(a) of the main paper.
\hfill$\square$

\subsection{Auxiliary results for the proof of Theorem 4.5(a) of the main paper}

\begin{lemma} \label{lem:thm43aux1}
Let the conditions of Theorem 4.5 of the main paper hold true. Let $H_i(t) := Z_i(t)/\sigma_X(t,t)$. Then, we have $\E(\|H_i\|_{0,\tilde \kappa}^p) < \infty$ for $p = q/3 \geq 2$ and some Hölder coefficient $\tilde \kappa \in (0,1]$ satisfying $p \tilde \kappa > 1$.
\end{lemma}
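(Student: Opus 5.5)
Since $\inf_t\sigma_X(t,t)>0$ and $\sigma_X$ is continuous (indeed $(d,\kappa)$-Hölder in each argument by the moment bound on $\|X\|_{d,\kappa}$), $1/\sigma_X(t,t)$ is bounded and $\tilde\kappa$-Hölder on $[0,1]$. It therefore suffices to show $\E(\|Z_i\|_{0,\tilde\kappa}^p)<\infty$. The product of a $\tilde\kappa$-Hölder function with a bounded Hölder deterministic function stays in $C^{0,\tilde\kappa}$, with norm at most a constant times $\|Z_i\|_{0,\tilde\kappa}$. I take $\tilde\kappa=\kappa$ if $d=0$ and $\tilde\kappa=1$ if $d\ge1$, as in Theorem 4.5. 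For $d\ge1$, the paths of $X$ are $C^1$ with $\|X\|_{0,1}\le\|X\|_{d,\kappa}$. Since $q>3/\tilde\kappa$ in both cases, $p\tilde\kappa=q\tilde\kappa/3>1$, and $p=q/3\ge2$ because $q>6$.

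The core step is the algebra-type inequality for Hölder seminorms, $\|fg\|_{0,\tilde\kappa}\le 2\|f\|_{0,\tilde\kappa}\|g\|_{0,\tilde\kappa}$, which holds since $|f(r)g(r)-f(s)g(s)|\le|f(r)||g(r)-g(s)|+|g(s)||f(r)-f(s)|$. I write
$$Z_i(t)=X_i^\mu(t)Y_i^\mu(t)-\alpha^\star(t)X_i^\mu(t)^2 .$$
Here $\alpha^\star=\sigma_{XY}(t,t)/\sigma_X(t,t)$ is deterministic, bounded and $\tilde\kappa$-Hölder. This follows from the same Cauchy--Schwarz argument as in the proof of Lemma 4.1: $|\sigma_{XY}(r,r)-\sigma_{XY}(s,s)|\le \E(|X^\mu(r)-X^\mu(s)||Y^\mu(r)|)+\E(|X^\mu(s)||Y^\mu(r)-Y^\mu(s)|)\lesssim|r-s|^{\tilde\kappa}$, using the second moments of $\|X\|_{0,\tilde\kappa}$ and $\|Y\|_{0,\tilde\kappa}$. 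Centering by the continuous, Hölder mean functions does not change the Hölder class. Hence
$$\|Z_i\|_{0,\tilde\kappa}\le C\big(\|X_i^\mu\|_{0,\tilde\kappa}\|Y_i^\mu\|_{0,\tilde\kappa}+\|X_i^\mu\|_{0,\tilde\kappa}^2\big).$$

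Taking $p$-th moments with $p=q/3$, I use Hölder's inequality on the cross term with exponents $3/2$ and $3$:
$$\E\big(\|X\|^{p}\|Y\|^{p}\big)\le \E(\|X\|^{3p/2})^{2/3}\E(\|Y\|^{3p})^{1/3}.$$
This requires $\E\|X\|_{d,\kappa}^{q/2}$ and $\E\|Y\|_{d,\kappa}^{q}$, which does not match the assumptions; the exponents must be swapped. Using exponents $3$ and $3/2$ instead gives
$$\E(\|X\|^{p}\|Y\|^{p})\le\E(\|X\|^{3p})^{1/3}\E(\|Y\|^{3p/2})^{2/3}=\E(\|X\|^{q})^{1/3}\E(\|Y\|^{q/2})^{2/3}<\infty,$$
which is exactly the assumed moment structure. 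The square term requires $\E\|X\|^{2p}=\E\|X\|^{2q/3}\le(\E\|X\|^q)^{2/3}$, which is also finite. Finally, $\|X^\mu\|_{0,\tilde\kappa}\le\|X\|_{0,\tilde\kappa}+\|\E X\|_{0,\tilde\kappa}$, with the deterministic part finite.

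The main obstacle I expect is bookkeeping rather than depth. I need to check that the Hölder regularity of the deterministic factors $\alpha^\star$ and $1/\sigma_X(t,t)$ is at least $\tilde\kappa$; in particular for $d\ge1$, $\tilde\kappa=1$ requires Lipschitz continuity of $\sigma_{XY}(t,t)$, which follows from differentiability of paths and the moment bounds. I also need to match Hölder's exponents to the asymmetric moment assumptions on $X$ and $Y$, as above.
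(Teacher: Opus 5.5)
Your proof is correct and follows essentially the same route as the paper. You pass from $H_i$ to $Z_i$ using that $t\mapsto\sigma_X(t,t)$ is bounded below and $\tilde\kappa$-Hölder. You then bound $\|Z_i\|_{0,\tilde\kappa}$ via the Hölder product inequality applied to $X_i^\mu Y_i^\mu-\alpha^\star (X_i^\mu)^2$, and use Hölder's inequality with exponents $3$ and $3/2$ so that exactly $\E\|X\|^{q}$ and $\E\|Y\|^{q/2}$ are required. The differences are only cosmetic: you check the regularity of $\alpha^\star$ by a direct Cauchy--Schwarz bound on increments instead of the paper's quotient rule, and your bound $\|X\|_{0,1}\le\|X\|_{d,\kappa}$ should carry a harmless factor $2$.
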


\begin{proof}
We first collect some general properties of the Hölder norm:
\begin{itemize}
	\item[(i)]  $\|\E(A)\|_{0,\kappa} \leq \E(\|A\|_{0,\kappa})$ for any $C[0,1]$-valued random variable $A$;
	\item[(ii)] $\|fg\|_{0,\kappa} \leq \|f\|_{0,\kappa} \|g\|_{0,\kappa}$ for any $f,g \in C[0,1]$;
	\item[(iii)] $\|f/g\|_{0,\kappa} \leq c^{-1} \|f\|_{0,\kappa} (1 +c^{-1} \|g\|_{0,\kappa})$ for any $f,g \in C[0,1]$ with $c= \inf_{t \in [0,1]} |g(t)| > 0$.
\end{itemize}
Property (i) holds because $\|\E(A)\|_\infty \leq \E(\|A\|_\infty) $ and $[\E(A)]_\kappa \leq \E([A]_\kappa)$ by interchanging expectation and supremum. For property (ii) notice that $[fg]_\kappa \leq [f]_\kappa \|g\|_\infty + [g]_\kappa \|f\|_\infty$ and $\|fg\|_\infty \leq \|f\|_\infty \|g\|_\infty$, which implies $\|fg\|_\infty + [fg]_\kappa  \leq (\|f\|_\infty + [f]_\kappa)(\|g\|_\infty + [g]_\kappa)$. Property (iii) follows by property (ii) together with the facts that
$$
\|1/g\|_\infty \leq \frac{1}{c}, \qquad
	[1/g]_\kappa = \sup_{t \neq s} \frac{\big|\frac{1}{g(t)} - \frac{1}{g(s)}\big|}{|t-s|^\kappa}
	=  \sup_{t \neq s} \frac{|g(s) - g(t)|}{|g(t) g(s)|\, |t-s|^\kappa} \leq \frac{[g]_\kappa}{c^2} \leq  \frac{\|g\|_{0,\kappa}}{c^2}.
$$
In our case, let $c= \inf_{t \in [0,1]} \sigma_X(t,t) > 0$.
Set $\tilde \kappa = \kappa$ if $d = 0$ and $\tilde \kappa = 1$ if $d \geq 1$. Then, by the additional moment condition in Theorem 4.5 of the main paper, $\E(\|X_i\|_{0, \tilde \kappa}^{q}) < \infty$ and $\E(\|Y_i\|_{0, \tilde \kappa}^{q/2}) < \infty$.
This holds because, for $d \geq 1$, $\E(\|X_i\|_{d,\kappa}^q) < \infty$ implies $\E(\|X_i\|_{0,1}^q) < \infty$.
Property (iii) implies
\begin{align} \label{eq:H-expectation}
	\E(\|H_i\|_{0,\tilde \kappa}^p) \leq c^{-p} \E(\|Z_i\|_{0,\tilde \kappa}^p) (1 +c^{-1} \|\sigma_X(\cdot,\cdot)\|_{0,\tilde \kappa})^p.
\end{align}
By the triangle inequality, properties (i) and (ii), and finite second moments, we have
$$
	\|\sigma_{X}(\cdot, \cdot)\|_{0,\tilde \kappa} \leq \|\E(X^2) - \E(X)^2\|_{0,\tilde \kappa} \leq \|\E(X^2)\|_{0,\tilde \kappa} + \|\E(X)\|_{0,\tilde \kappa}^2
	\leq 2 \E(\|X\|_{0,\tilde \kappa}^2) < \infty,
$$
so it remains to show that $\E(\|Z_i\|_{0,\tilde \kappa}^p) < \infty$.
With property (ii) and the triangle inequality,
\begin{align*}
	\|Z_i\|_{0,\tilde \kappa}
	\leq \|X_i^\mu Y_i^\mu\|_{0,\tilde \kappa} + \| \alpha^\star (X_i^\mu)^2\|_{0,\tilde \kappa}
	\leq \|X_i^\mu \|_{0,\tilde \kappa} \|Y_i^\mu\|_{0,\tilde \kappa} + \| \alpha^\star \|_{0,\tilde \kappa} \| X_i^\mu\|_{0,\tilde \kappa}^2,
\end{align*}
so $\| Z_i\|_{0,\tilde \kappa}^p \leq 2^{p-1} \|X_i^\mu \|_{0,\tilde \kappa}^p \|Y_i^\mu\|_{0,\tilde \kappa}^p + 2^{p-1} \| \alpha^\star \|_{0,\tilde \kappa}^p \| X_i^\mu\|_{0,\tilde \kappa}^{2p}$, and, by Hölder's inequality using the conjugate exponents $1/3$ and $2/3$,
\begin{align} \label{eq:Z-expectation}
	\E(\| Z_i\|_{0,\tilde \kappa}^p) \leq 2^{p-1} \E(\|X_i^\mu \|_{0,\tilde \kappa}^{3p})^{1/3} \E(\|Y_i^\mu\|_{0,\tilde \kappa}^{3p/2})^{2/3} + 2^{p-1} \| \alpha^\star \|_{0,\tilde \kappa}^p \E(\| X_i^\mu\|_{0,\tilde \kappa}^{2p}).
\end{align}
With $3p=q$ note that, by the additional moment condition in Theorem 4.5 of the main paper,
\begin{align}
	\E(\|X_i^\mu \|_{0,\tilde \kappa}^{3p}) &= \E(\|X_i - \E(X_i)\|_{0,\tilde \kappa}^{q}) \nonumber \\
	&\leq \E(\|X_i \|_{0,\tilde \kappa} + \E(\|X_i\|_{0,\tilde \kappa})^q) \nonumber \\
	&\leq 2^{q-1} \E( \|X_i \|_{0,\tilde \kappa}^q) + 2^{q-1} \E(\|X_i\|_{0,\tilde \kappa})^q < \infty. \label{eq:center-X}
\end{align}
By the same argument we also have $\E(\|Y_i^\mu\|_{0,\tilde \kappa}^{3p/2}) = \E(\|Y_i^\mu\|_{0,\tilde \kappa}^{q/2}) < \infty$.
Furthermore, by the triangle and Cauchy-Schwarz inequalities and properties (i) and (ii),
\begin{align*}
	\|\sigma_{XY}(\cdot, \cdot)\|_{0,\tilde \kappa}
	&\leq \|\E(X_iY_i)\|_{0,\tilde \kappa} + \|\E(X_i)\|_{0,\tilde \kappa} \|\E(Y_i)\|_{0,\tilde \kappa} \\
	&\leq \E(\|X_i\|_{0,\tilde \kappa}^2)^{1/2} \E(\|Y_i\|_{0,\tilde \kappa}^2)^{1/2} + \E(\|X_i\|_{0,\tilde \kappa}) \E(\|Y_i\|_{0,\tilde \kappa}) < \infty,
\end{align*}
and, therefore, by property (iii)
$$
	\| \alpha^\star \|_{0,\tilde \kappa} \leq c^{-1} \|\sigma_{XY}(\cdot, \cdot)\|_{0,\tilde \kappa}  (1 + c^{-1} \|\sigma_{X}(\cdot, \cdot)\|_{0,\tilde \kappa}) < \infty.
$$
Consequently, by \eqref{eq:Z-expectation},
\begin{align} \label{eq:Z-Hölderbound}
	\E(\| Z_i\|_{0,\tilde \kappa}^p) < \infty,
\end{align}
and, by \eqref{eq:H-expectation}, the assertion follows.
\end{proof}

\begin{lemma}[Kolmogorov-Chentsov Theorem] \label{lem:kolmogorov}
Let $(V(t))_{t \in [0,1]}$ be a real-valued sto\-chas\-tic process.
Suppose that for some constants $a, b, C > 0$
$$
	\E(|V(t) - V(s)|^a) \leq C |t-s|^{1+b} \qquad \text{for all} \ s,t \in [0,1].
$$
Then, for every exponent $0 < \gamma < b/a$ there exists a modification
$\widetilde V$ of $V$, meaning $\widetilde V(t) = V(t)$ almost surely for each fixed $t$, whose sample paths are almost surely $\gamma$-Hölder continuous with $\E([\widetilde V]_\gamma^a) \leq C_\gamma  < \infty$, where $C_\gamma$ depends only on $C$, $a$, $b$, and $\gamma$.
\end{lemma}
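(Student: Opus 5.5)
The plan is the classical dyadic chaining argument, carried out so that the moment bound on the Hölder constant comes out explicitly. Let $D_n=\{k2^{-n}:k=0,\ldots,2^n\}$ and $D=\bigcup_n D_n$, and set
\[
M_n:=\max_{0\le k<2^n}\big|V((k+1)2^{-n})-V(k2^{-n})\big| .
\]
Bounding the maximum by the sum and using the hypothesis gives
\[
\E(M_n^a)\le \sum_{k=0}^{2^n-1}\E\big|V((k+1)2^{-n})-V(k2^{-n})\big|^a\le 2^n C 2^{-n(1+b)}=C2^{-nb}.
\]
Fix $0<\gamma<b/a$ and define the random variable $S:=\sum_{n\ge0}2^{n\gamma}M_n$. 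Its $a$-th moment is controlled as follows.
\begin{itemize}
\item If $a\ge1$, Minkowski's inequality gives $\E(S^a)^{1/a}\le C^{1/a}\sum_n 2^{n(\gamma-b/a)}<\infty$.
\item If $0<a<1$, subadditivity of $x\mapsto x^a$ gives $\E(S^a)\le \sum_n 2^{n\gamma a}\E(M_n^a)\le C\sum_n 2^{n(\gamma a-b)}<\infty$.
\end{itemize}
In either case $\E(S^a)\le C'$, where $C'$ depends only on $C,a,b,\gamma$. In particular $S<\infty$ almost surely.

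The key step is the chaining estimate on $D$. Take $s<t$ in $D$ with $2^{-(m+1)}\le t-s<2^{-m}$. Approximate $s$ and $t$ from inside the interval by the dyadic points at successive levels $n\ge m+1$: $t$ is reached from a point of $D_{m+1}$ by adding at most one step $2^{-n}$ at each level $n>m+1$, and symmetrically for $s$. The two level-$(m+1)$ points are either equal or adjacent. This yields
\[
|V(t)-V(s)|\le 2\sum_{n\ge m+1}M_n .
\]
Dividing by $|t-s|^\gamma\ge 2^{-(m+1)\gamma}$ and using $2^{(m+1)\gamma}\le 2^{n\gamma}$ for $n\ge m+1$ gives
\[
\frac{|V(t)-V(s)|}{|t-s|^\gamma}\le 2\cdot 2^{(m+1)\gamma}\sum_{n\ge m+1}M_n\le 2\sum_{n\ge m+1}2^{n\gamma}M_n\le 2S .
\]
Hence $V|_D$ is $\gamma$-Hölder with random constant $L:=2S$, and $\E(L^a)\le 2^a C'=:C_\gamma$. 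I expect this bookkeeping of the dyadic path to be the main obstacle; I would write it out carefully using binary expansions of $s$ and $t$.

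It remains to construct the modification. On the event $\{S<\infty\}$, $V|_D$ is uniformly continuous, so define $\widetilde V(t):=\lim_{D\ni r\to t}V(r)$; on the null event $\{S=\infty\}$ set $\widetilde V\equiv0$. Then $\widetilde V$ is $\gamma$-Hölder on all of $[0,1]$ with the same constant $L$, because the Hölder inequality passes to limits, so $\E([\widetilde V]_\gamma^a)\le C_\gamma$. To see that $\widetilde V$ is a modification, fix $t$ and take $r_j\in D$ with $r_j\to t$. The hypothesis gives $\E|V(r_j)-V(t)|^a\le C|r_j-t|^{1+b}\to0$, so $V(r_j)\to V(t)$ in probability. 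Along a subsequence the convergence is almost sure, while $V(r_j)\to\widetilde V(t)$ almost surely by construction. Hence $\widetilde V(t)=V(t)$ almost surely. Measurability of $\widetilde V(t)$ is clear, since it is a pointwise limit of the measurable variables $V(r_j)$ on a measurable event.
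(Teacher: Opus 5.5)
Your proof is correct and complete. The paper does not prove this lemma itself; it only cites Theorem 4.23 of \citet{kallenberg2021foundations}. That proof uses the same dyadic chaining you use: the maxima $M_n$ of level-$n$ increments with $\E(M_n^a)\le C2^{-nb}$, a chaining bound of the form $|V(t)-V(s)|\lesssim\sum_{n\ge m}M_n$ for dyadic $s,t$, continuous extension from $D$, and identification of the modification via convergence in probability along $r_j\to t$.

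The one substantive difference is how the series is controlled. Kallenberg bounds $\E\sum_n(2^{\gamma n}M_n)^a$, which only gives almost sure summability and hence almost sure (local) Hölder continuity; his theorem is stated without any moment bound on the Hölder constant. You instead bound $\E(S^a)$ for $S=\sum_n 2^{n\gamma}M_n$ itself, using Minkowski when $a\ge1$ and subadditivity of $x\mapsto x^a$ when $a<1$. This is the route of Revuz and Yor's proof of Kolmogorov's criterion. It yields $\E([\widetilde V]_\gamma^a)\le 2^a\E(S^a)$ with an explicit constant depending only on $(C,a,b,\gamma)$.

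That moment bound is exactly the part of the lemma the paper uses. The proofs of Lemma~\ref{lem:thm43aux2} and Theorem~\ref{thm:alphaStar}(a) need $\sup_n\E([\widetilde G_n]_\gamma^p)<\infty$, so your argument supplies a justification that the citation alone does not.

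For the bookkeeping you flagged, take $t_n=\lfloor 2^nt\rfloor2^{-n}$ and $s_n=\lceil 2^ns\rceil2^{-n}$. Then $t_{n+1}-t_n$ and $s_n-s_{n+1}$ lie in $\{0,2^{-(n+1)}\}$, and $0\le t_{m+1}-s_{m+1}<2^{-m}$. The pair $s=0$, $t=1$ is the case $m=-1$, which your series already covers because it starts at $n=0$.
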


The proof of the Kolmogorov-Chentsov Theorem may be found, e.g., in \cite{kallenberg2021foundations}, Theorem 4.23.

\begin{lemma} \label{lem:thm43aux2}
Let the conditions of Theorem 4.5 of the main paper hold true. Then, as $n \to \infty$,
\begin{itemize}
	\item[(a)] $\sup_{t \in [0,1]} |\bar X(t) - \E(X(t))| = O_P(n^{-1/2})$
	\item[(b)] $\sup_{t \in [0,1]} |\bar Y(t) - \E(Y(t))| = O_P(n^{-1/2})$
	\item[(c)] $\sup_{t \in [0,1]} |\widehat \sigma_X(t,t) - \sigma_X(t,t)| = o_P(1)$
\end{itemize}
\end{lemma}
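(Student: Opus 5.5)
For parts (a) and (b) I would use a functional central limit theorem in $C[0,1]$ together with tightness obtained from Kolmogorov--Chentsov (Lemma~\ref{lem:kolmogorov}). For part (c) I would use an elementary ratio argument.

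Write
\[
\sqrt n\bigl(\bar X(t)-\E(X(t))\bigr)=n^{-1/2}\sum_{i=1}^n X_i^\mu(t)=:S_n(t).
\]
This is a normalized sum of i.i.d.\ centered $C[0,1]$-valued random elements. Under the conditions of Theorem~\ref{thm:alphaStar} we have $\E(\|X\|_{0,\tilde\kappa}^q)<\infty$ with $q\tilde\kappa>3$, so in particular $q\tilde\kappa>1$. By the elementary Hölder bound,
\[
\E|X^\mu(t)-X^\mu(s)|^q\le 2^q\,\E(\|X\|_{0,\tilde\kappa}^q)\,|t-s|^{q\tilde\kappa}.
\]
Rosenthal's inequality \eqref{eq:rosenthal} with $p=q$, applied to the increments $S_n(t)-S_n(s)$, then gives
\[
\E|S_n(t)-S_n(s)|^q\le C|t-s|^{q\tilde\kappa}
\]
uniformly in $n$. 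The leading Rosenthal term carries $n^{q/2}\cdot n^{-q/2}$. The second term carries $n\cdot n^{-q/2}\le1$ and is multiplied by $\E|\Delta|^q\le C|t-s|^{q\tilde\kappa}$. The $(\E\Delta^2)^{q/2}$ factor is likewise bounded by $C|t-s|^{q\tilde\kappa}$ via the same Hölder bound.

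Since $q\tilde\kappa>1$, Lemma~\ref{lem:kolmogorov} applies with $a=q$ and $1+b=q\tilde\kappa$. It yields $\E([S_n]_\gamma^q)\le C_\gamma$ uniformly in $n$, because $C_\gamma$ depends only on the constants, which are uniform. Continuity of paths makes the modification equal to $S_n$. Then
\[
\sup_t|S_n(t)|\le|S_n(0)|+[S_n]_\gamma ,
\]
and $\E S_n(0)^2=\sigma_X(0,0)$. Both terms are $O_P(1)$, which proves (a).

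Part (b) is identical with $Y$ in place of $X$. The only care is the moment exponent: the condition on $Y$ is $\E(\|Y\|_{0,\tilde\kappa}^{q/2})<\infty$, so I would run Rosenthal with $p=q/2$. This requires $(q/2)\tilde\kappa>1$, which follows from $q>3/\tilde\kappa$, and $q/2\ge2$, which follows from $q>6$. The rest is unchanged.

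For (c), write
\[
\widehat\sigma_X(t,t)=n^{-1}\sum_i X_i^\mu(t)^2-\bigl(\bar X(t)-\E X(t)\bigr)^2 .
\]
The second term is $O_P(n^{-1})$ uniformly in $t$ by (a). For the first term I would apply a uniform law of large numbers to the i.i.d.\ $C[0,1]$-valued elements $W_i=(X_i^\mu)^2-\sigma_X(\cdot,\cdot)$. Either the strong law of large numbers in the separable Banach space $C[0,1]$ (Mourier), which only needs $\E\|W_i\|_\infty\le\E\|X^\mu\|_\infty^2<\infty$, gives $\sup_t|n^{-1}\sum_iW_i(t)|\to0$ a.s., or, alternatively, one can repeat the Kolmogorov argument above for $W_i$. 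That repetition would use the product property (ii) from the proof of Lemma~\ref{lem:thm43aux1} and $p=q/2$ moments to show $n^{-1/2}\sum_iW_i=O_P(1)$ in sup-norm, which is even stronger than needed.

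The main obstacle is the uniform-in-$n$ increment bound feeding Kolmogorov--Chentsov. Specifically, one must check that the Rosenthal constants and the term $n^{1-p/2}\E|\Delta|^p$ do not spoil the bound, and that the exponent condition $p\tilde\kappa>1$ holds for the weaker $Y$-moments. I would resolve this exactly via the assumption $q>\max\{6,3/\tilde\kappa\}$, as outlined above.
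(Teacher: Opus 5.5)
Your proposal is correct and follows the paper's proof essentially step for step. Both arguments use a Rosenthal increment bound that is uniform in $n$, then Kolmogorov--Chentsov to get a Hölder constant with uniformly bounded moments, and the Banach-space law of large numbers for (c). Your closing step $\sup_t|S_n(t)|\le|S_n(0)|+[S_n]_\gamma$ is a slightly more direct substitute for the paper's appeal to stochastic equicontinuity and tightness in $C[0,1]$. Your explicit choice $p=q/2$ for $Y$ also makes precise the exponent bookkeeping that the paper leaves implicit.
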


\begin{proof}
\textit{Proof of (a).}
Define
$$
  H_i^X(t) := X_i(t) - \E(X(t)), \qquad
  G_n^X(t) := \frac{1}{\sqrt n} \sum_{i=1}^n H_i^X(t)
  = \sqrt n(\bar X(t) - \E(X(t))).
$$
We need to show that $\sup_{t\in[0,1]} |G_n^X(t)| = O_P(1)$.
By Assumption A.2 of the main paper and the centering argument used for
$X_i^\mu$ in the proof of Lemma~\ref{lem:thm43aux1} (see
\eqref{eq:center-X}), there exist $p\ge2$ and $\tilde\kappa\in(0,1]$ with
$p\tilde\kappa>1$ such that $\E\big(\|H_i^X\|_{0,\tilde\kappa}^p\big) < \infty$.
Repeating the Rosenthal argument leading to \eqref{eq:Gn-incrementbound}, now
with $H_i$ replaced by $H_i^X$, we obtain for all $s,t\in[0,1]$,
\begin{equation}\label{eq:GnX-increment}
  \E(| G_n^X(t) - G_n^X(s)|^p)
  \le C_X |t-s|^{p\tilde\kappa},
\end{equation}
where $C_X<\infty$ does not depend on $n$.

Applying Lemma~\ref{lem:kolmogorov} with $a=p$, $1+b=p\tilde\kappa$ and any
$0<\gamma<b/a=\tilde\kappa-1/p$, we obtain, for each $n$, a modification
$\widetilde G_n^X$ of $G_n^X$ such that
$$
  |\widetilde G_n^X(t) - \widetilde G_n^X(s)|
  \le [\widetilde G_n^X]_\gamma |t-s|^\gamma
  \quad\text{for all } s,t\in[0,1],\ \text{a.s.},
$$
and $\E([\widetilde G_n^X]_\gamma^p) \le C_\gamma <\infty$, with
$C_\gamma$ independent of $n$. In particular, $\sup_{n\ge1} \E([\widetilde G_n^X]_\gamma^p) < \infty$ and $[\widetilde G_n^X]_\gamma = O_P(1)$.
Since $\widetilde G_n^X$ is a modification of $G_n^X$, the increment bound
\eqref{eq:GnX-increment} also holds with $G_n^X$ replaced by
$\widetilde G_n^X$. Therefore, by Theorem~22.10 in
\cite{davidson_stochastic_2021}, the sequence $\widetilde G_n^X$ is
stochastically equicontinuous and thus tight in $C[0,1]$. In particular,
$$
  \sup_{t\in[0,1]} |\widetilde G_n^X(t)| = O_P(1).
$$
Finally, $G_n^X$ and $\widetilde G_n^X$ are modifications with almost surely
continuous paths, hence indistinguishable as $C[0,1]$-valued random elements
and they induce the same probability law on $C[0,1]$. Consequently,
$$
  \sup_{t\in[0,1]} |G_n^X(t)|
  \overset{d}{=} \sup_{t\in[0,1]} |\widetilde G_n^X(t)|
  = O_P(1),
$$
and the assertion follows.

\bigskip
\noindent
\textit{Proof of (b).}
The proof is analogous to that of part (a), with $X$ replaced by $Y$.
We define $H_i^Y(t) := Y_i(t) - \E(Y(t))$ and
$G_n^Y(t) := n^{-1/2}\sum_{i=1}^n H_i^Y(t)$ and then apply the same
Rosenthal, Kolmogorov-Chentsov, and tightness arguments as for $G_n^X$.

\bigskip
\noindent
\textit{Proof of (c).}
Writing $X_i(t) - \bar X(t) = X_i^\mu(t) - \Delta_X(t)$ with $X_i^\mu(t) := X_i(t) - \E(X(t))$ and
$\Delta_X(t) := \bar X(t) - \E(X(t))$, we obtain $\widehat\sigma_X(t,t)
  = \frac{1}{n}\sum_{i=1}^n (X_i^\mu(t))^2 - (\Delta_X(t))^2$
Therefore,
$$
  \widehat\sigma_X(t,t) - \sigma_X(t,t)
  = \frac{1}{n}\sum_{i=1}^n (X_i^\mu(t))^2
      - \E((X_i^\mu(t))^2)
    - (\Delta_X(t))^2 =: A_n(t) - (\Delta_X(t))^2.
$$
Hence
\begin{equation}\label{eq:sigmax-decomp-simple}
  \sup_{t\in[0,1]}|\widehat\sigma_X(t,t) - \sigma_X(t,t)|
  \le \sup_{t\in[0,1]}|A_n(t)|
     + \sup_{t\in[0,1]}(\Delta_X(t))^2.
\end{equation}
By part (a) of this lemma we know that
$$
  \sup_{t\in[0,1]}|\Delta_X(t)|
  = \sup_{t\in[0,1]}|\bar X(t) - \E(X(t))|
  = O_P(n^{-1/2}),
$$
so that
\begin{equation}\label{eq:DeltaX-square-simple}
  \sup_{t\in[0,1]}(\Delta_X(t))^2 = O_P(n^{-1}) = o_P(1).
\end{equation}
It remains to show $\sup_{t\in[0,1]}|A_n(t)| = o_P(1)$. Define
$$
  W_i(t) := (X_i^\mu(t))^2 - \E((X_i^\mu(t))^2),
  \qquad
  A_n(t) = \frac{1}{n}\sum_{i=1}^n W_i(t).
$$
Because $W_i$ are centered $C[0,1]$-valued random elements and $\E(\|W_i\|_\infty)< \infty$, the Banach-space law of large numbers (e.g. Corollary 7.10 in \citealt{ledoux1991probability}) implies
\begin{equation}\label{eq:An-sup-op}
  \sup_{t\in[0,1]} |A_n(t)| = o_P(1).
\end{equation}
Combining \eqref{eq:sigmax-decomp-simple},
\eqref{eq:DeltaX-square-simple}, and \eqref{eq:An-sup-op} yields $\sup_{t\in[0,1]}|\widehat\sigma_X(t,t) - \sigma_X(t,t)| = o_P(1)$.
\end{proof}

\subsection{Proof of Theorem 4.5(a) of the main paper}

We first decompose the sample cross-covariance as follows:
$$
	\widehat \sigma_{XY}(t,t) = \sigma_{XY}(t,t) + R_{XY}(t) - \Delta_X(t) \Delta_Y(t), \quad \widehat \sigma_{X}(t,t) = \sigma_{X}(t,t) + R_{X}(t) - (\Delta_X(t))^2
$$
with sample mean estimation error
$$
	\Delta_Y(t) = \bar Y(t) - \E(Y_i(t)), \quad \Delta_X(t) = \bar X(t) - \E(X_i(t)),
$$
and
$$
	R_{XY}(t) = \frac{1}{n} \sum_{i=1}^n \Big(X_i^{\mu}(t) Y_i^{\mu}(t) - \sigma_{XY}(t,t)\Big), \quad
	R_{X}(t) = \frac{1}{n} \sum_{i=1}^n \Big((X_i^{\mu}(t))^2 - \sigma_{X}(t,t)\Big)
$$
being the cross-covariance estimation error when knowing the true mean, where $Y_i^\mu(t) = Y_i(t) - \E(Y_i(t))$ and $X_i^\mu(t) = X_i(t) - \E(X_i(t))$ are the population-centered processes. Recall that $\alpha^\star(t) = \sigma_{XY}(t,t)/\sigma_X(t,t)$.
Then,
\begin{align*}
		\widehat \alpha^\star(t) - \alpha^\star(t)
		&= \frac{ \widehat \sigma_{XY}(t,t) }{\widehat \sigma_X(t,t)} -  \alpha^\star(t) \\
		&= \frac{\sigma_{XY}(t,t)+R_{XY}(t)
		-\Delta_X(t)\Delta_Y(t)}{\widehat \sigma_X(t,t)}
		-\frac{\alpha^\star(t)
		\big(\sigma_X(t,t)+R_X(t)-(\Delta_X(t))^2\big)}
		{\widehat \sigma_X(t,t)}\\
		&= \frac{R_{XY}(t)-\alpha^\star(t)R_X(t)}
		{\widehat \sigma_X(t,t)}
		+\frac{\alpha^\star(t)(\Delta_X(t))^2
		-\Delta_X(t)\Delta_Y(t)}{\widehat \sigma_X(t,t)}.
\end{align*}
Note that $Z_i(t) = X_i^\mu(t) (Y_i^\mu(t) - \alpha^\star(t) X_i^\mu(t))$ can be represented as
$$
	\frac{1}{n} \sum_{i=1}^n Z_i(t) = R_{XY}(t) - \alpha^\star(t) R_X(t).
$$
Define
$$
	G_n(t) := \frac{1}{\sqrt n} \sum_{i=1}^n H_i(t), \quad H_i(t) := \frac{Z_i(t)}{\sigma_X(t,t)},
$$
and
$$
	E_n(t) := \frac{(\sigma_X(t,t) - \widehat \sigma_X(t,t))G_n(t) + \sqrt n (\alpha^\star(t)(\Delta_X(t))^2 - \Delta_X(t) \Delta_Y(t))}{\widehat \sigma_X(t,t)}.
$$
so that the expression of interest admits the decomposition
$$
	\sqrt n (\widehat \alpha^\star(t) - \alpha^\star(t) )
	= G_n(t) + E_n(t).
$$
The term $G_n(t)$ will determine the asymptotic distribution,
while the corrections due to $E_n(t)$ will be shown to be negligible uniformly in $t \in [0,1]$.

To analyze $G_n(t)$, we note that for any finite collection of time points
$0 \le t_1 < \dots < t_N \le 1$, the vector
\begin{align} \label{eq:vectorH}
\frac{1}{\sqrt{n}}\sum_{i=1}^n
\big(H_i(t_1),\dots,H_i(t_N)\big)
\end{align}
is a sum of i.i.d.\ centered random vectors with finite second moments, so by the multivariate central limit theorem, \eqref{eq:vectorH} converges in distribution to a $N$-dimensional centered Gaussian vector with a covariance matrix
whose entries are given by
$$
	\mathrm{Cov}(H_i(t_j),H_i(t_k)) = \frac{\sigma_Z(t_j,t_k)}{\sigma_X(t_j,t_j) \sigma_X(t_k,t_k)} = \sigma_{\widehat \alpha^\star}(t_j,t_k).
$$
To extend the multivariate normality result in $\mathbb{R}^N$ to the space $C[0,1]$, we need to show that the process $G_n$ is stochastically equicontinuous.

By Lemma \ref{lem:thm43aux1} we have the Hölder norm moment bound $\E(\|H_i\|_{0,\tilde \kappa}^p) < \infty$ for some $p\geq 2$ and $\tilde \kappa \in (0,1]$ with $p \tilde \kappa > 1$.
Then, for every fixed $n$ and every $s,t \in [0,1]$,
		\begin{align*}
			\E\Big( | G_n(t) - G_n(s)|^p \Big)
			&= \frac{1}{n^{p/2}} \E\bigg( \Big| \sum_{i=1}^n (H_i(t) - H_i(s)) \Big|^p \bigg) \\
			&\leq \frac{R_p}{n^{p/2}}
			\left(\left[\sum_{i=1}^n \E\Big(|H_i(t)-H_i(s)|^2\Big)\right]^{p/2}
			+\sum_{i=1}^n \E\Big(|H_i(t)-H_i(s)|^p\Big)\right)\\
			&=R_p
			\left(\E\Big(|H_1(t)-H_1(s)|^2\Big)^{p/2}
			+n^{1-p/2}\E\Big(|H_1(t)-H_1(s)|^p\Big)\right)\\
			&\leq 2 R_p  \E( |H_1(t) - H_1(s)|^p )
		\end{align*}
where the first inequality follows from Rosenthal's inequality given in equation \eqref{eq:rosenthal}, with a constant $R_p < \infty$ depending only on $p$. The second inequality follows from Jensen's inequality and the fact that $n^{1-p/2} \leq 1$.
Since $\E(|H_i(t) - H_i(s)|^p) \leq \E(\|H_i\|_{0,\tilde \kappa}^p) |t-s|^{p \tilde \kappa}$, we obtain
\begin{align} \label{eq:Gn-incrementbound}
	\E( | G_n(t) - G_n(s)|^p ) \leq C |t-s|^{p \tilde \kappa}, \quad C:= 2 R_p \E(\|H_i\|_{0,\tilde \kappa}^p) < \infty,
\end{align}
where the constant $C$ holds uniformly in $n$.

Then, by the Kolmogorov-Chentsov theorem given in Lemma~\ref{lem:kolmogorov}
with $a=p$, $b = p\tilde \kappa - 1$ and $0 < \gamma < b/a = \tilde \kappa - 1/p$,
there exists a modification $\widetilde G_n$ of $G_n$ such that
$$
  |\widetilde G_n(t) - \widetilde G_n(s)|
  \leq [\widetilde G_n]_\gamma |t-s|^\gamma
  \quad \text{for all } s,t \in [0,1], \quad \text{almost surely},
$$
and $\E([\widetilde G_n]_\gamma^p) \leq C_\gamma < \infty$, where $C_\gamma$ does not depend on $n$.
Since $C$ and $C_\gamma$ do not depend on $n$, we have
$\sup_{n \geq 1} \E([\widetilde G_n]_\gamma^p) < \infty$ and thus
$[\widetilde G_n]_\gamma = O_P(1)$ as $n \to \infty$.

For each fixed $n$, $\widetilde G_n$ is a modification of $G_n$, so they have
the same finite-dimensional distributions and the increment bound
\eqref{eq:Gn-incrementbound}
also holds with $G_n$ replaced by $\widetilde G_n$. Therefore, by Theorem 22.10
in \cite{davidson_stochastic_2021}, the sequence $\widetilde G_n$ is
stochastically equicontinuous and thus tight in $C[0,1]$. Together with the
finite-dimensional convergence of $G_n$ (and hence of $\widetilde G_n$) and
Theorem 7.5 in \cite{Billingsley_1999}, this yields
$$
  \widetilde G_n \to_D \mathcal{GP}(0,\sigma_{\widehat\alpha^\star})
  \quad \text{in } C[0,1].
$$
Since $G_n$ and $\widetilde G_n$ are modifications, they are indistinguishable as $C[0,1]$-valued random
elements and thus induce the same probability law on $C[0,1]$. In particular,
they have the same weak limit, so
$$
  G_n \to_D \mathcal{GP}(0,\sigma_{\widehat\alpha^\star})
  \quad \text{in } C[0,1], \quad \text{as } n\to\infty.
$$
By Lemma \ref{lem:thm43aux2} and the continuous mapping theorem and Slutsky's theorem for function spaces \citep[see][p.~259, Theorems 18.10 and 18.11]{vanderVaart1998}, we get
$$
\sup_{t\in[0,1]}\big|E_n(t)\big| = \sup_{t\in[0,1]}\big|\sqrt{n}(\widehat\alpha^\star(t)-\alpha^\star(t)) - G_n(t)\big|
= o_P(1),
$$
so that $\sqrt{n}(\widehat\alpha^\star-\alpha^\star)$ and $G_n$ have the same weak limit in $C[0,1]$, which completes the proof of part (a).

\subsection{Auxiliary results for the proof of Theorem 4.5(b) of the main paper}

\begin{lemma} \label{lem:thm43aux3}
Let the conditions of Theorem 4.5 of the main paper hold true. Then, as $n \to \infty$,
$$
	\sup_{s,t \in [0,1]} \bigg| \frac{1}{n}\sum_{i=1}^n
       \big(\widehat Z_i(s)\widehat Z_i(t) - Z_i(s)Z_i(t)\big) \bigg| = O_P(n^{-1/2}).
$$
\end{lemma}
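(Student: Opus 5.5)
We write $\widehat Z_i(t)-Z_i(t)$ as a finite sum of terms, each a random function that is uniformly small in $t$ multiplied by a product of the original processes $X_i^\mu,Y_i^\mu$. With $\Delta_X=\bar X-\E(X)$, $\Delta_Y=\bar Y-\E(Y)$ and $D(t):=\widehat\alpha^\star(t)-\alpha^\star(t)$, we have $X_i^{\bar\mu}=X_i^\mu-\Delta_X$ and $Y_i^{\bar\mu}=Y_i^\mu-\Delta_Y$. Expanding gives
\begin{align*}
\widehat Z_i(t)-Z_i(t)
&=-\Delta_X(t)\big(Y_i^\mu(t)-\alpha^\star(t)X_i^\mu(t)\big)
 -X_i^\mu(t)\big(\Delta_Y(t)-\alpha^\star(t)\Delta_X(t)\big)
 -D(t)X_i^\mu(t)^2\\
&\quad+\Delta_X(t)\big(\Delta_Y(t)-\alpha^\star(t)\Delta_X(t)\big)
 +2D(t)X_i^\mu(t)\Delta_X(t)-D(t)\Delta_X(t)^2 .
\end{align*}
Thus $\widehat Z_i(t)-Z_i(t)=\sum_{r}c_r(t)U_{i,r}(t)$. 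Each coefficient $c_r(t)$ lies in $\{\Delta_X,\Delta_Y,\alpha^\star\Delta_X,D,D\Delta_X,D\Delta_X^2,\Delta_X\Delta_Y,\dots\}$. Each $U_{i,r}(t)$ lies in $\{1,X_i^\mu(t),Y_i^\mu(t),X_i^\mu(t)^2\}$.

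The first step is to establish $\sup_t|c_r(t)|=O_P(n^{-1/2})$ for every coefficient. Lemma~\ref{lem:thm43aux2}(a),(b) gives this directly for $\Delta_X$ and $\Delta_Y$. Lemma~\ref{lem:thm43aux1} shows $\alpha^\star$ is H\"older, hence bounded. For $D$, Theorem~4.5(a) gives $\sqrt n D\to_D$ a Gaussian process in $C[0,1]$. The continuous mapping theorem then yields $\sup_t|D(t)|=O_P(n^{-1/2})$. Alternatively, the decomposition $\sqrt n D=G_n+E_n$ together with tightness of $G_n$ and $\sup_t|E_n|=o_P(1)$ gives the same bound.

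Next we write
\[
\widehat Z_i(s)\widehat Z_i(t)-Z_i(s)Z_i(t)=(\widehat Z_i(s)-Z_i(s))Z_i(t)+Z_i(s)(\widehat Z_i(t)-Z_i(t))+(\widehat Z_i(s)-Z_i(s))(\widehat Z_i(t)-Z_i(t)).
\]
Inserting the expansion, the average over $i$ is bounded by
\[
\sum_{r}\sup_s|c_r(s)|\cdot\frac1n\sum_{i=1}^n\|U_{i,r}\|_\infty\|Z_i\|_\infty
\;+\;\sum_{r,r'}\sup|c_r|\sup|c_{r'}|\cdot\frac1n\sum_{i=1}^n\|U_{i,r}\|_\infty\|U_{i,r'}\|_\infty .
\]
The crucial point is that the coefficients can be taken out of the average uniformly in $(s,t)$. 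What remains are averages of i.i.d.\ nonnegative variables that do not depend on $(s,t)$. These are $O_P(1)$ by Markov's inequality, provided their expectations are finite.

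The main obstacle is checking these moments. The worst first-order term is $\E(\|X^\mu\|_\infty^2\|Z\|_\infty)$, which involves $\|X\|_\infty^4\|Y\|_\infty$. The worst second-order term is $\E\|X^\mu\|_\infty^4$. Theorem~4.5 assumes $\E\|X\|_{d,\kappa}^q<\infty$ and $\E\|Y\|_{d,\kappa}^{q/2}<\infty$ with $q>6$, and $\|X\|_\infty\le\|X\|_{d,\kappa}$. By H\"older's inequality with exponents $3/2$ and $3$,
\[
\E\big(\|X\|_\infty^4\|Y\|_\infty\big)\le\big(\E\|X\|_\infty^{6}\big)^{2/3}\big(\E\|Y\|_\infty^{3}\big)^{1/3}<\infty ,
\]
since $q\ge6$ and $q/2\ge3$. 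The remaining mixed products, such as $\|X\|_\infty^3\|Y\|_\infty$ and $\|X\|_\infty\|Y\|_\infty^2$, are handled the same way, using the bound for $\|Y_i^\mu\|_{0,\tilde\kappa}$ from \eqref{eq:center-X}. Combining, the first group of terms is $O_P(n^{-1/2})$ and the second is $O_P(n^{-1})$, which proves the lemma.
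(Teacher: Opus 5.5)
Your proof is correct and follows essentially the same route as the paper. It uses the same expansion of $\widehat Z_i-Z_i$, the same uniform $O_P(n^{-1/2})$ rates for $\Delta_X$, $\Delta_Y$ and $\widehat\alpha^\star-\alpha^\star$ (from Lemma~\ref{lem:thm43aux2} and Theorem~4.5(a)), and the same three-term product identity with the small coefficients pulled out uniformly in $(s,t)$. The only cosmetic difference is the last step: the paper applies Cauchy--Schwarz to reduce to $\frac1n\sum_i\|Z_i\|_\infty^2$ and $\frac1n\sum_i M_i^2$, whereas you bound first moments of the cross-products directly by H\"older. (Incidentally, your worst first-order term is really $\|X\|_\infty^3\|Y\|_\infty+\|X\|_\infty^4$ rather than $\|X\|_\infty^4\|Y\|_\infty$, which is harmless.)
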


\begin{proof}
Write
$$
\Delta_X(t) := \bar X(t) - \E(X(t)),\quad
\Delta_Y(t) := \bar Y(t) -  \E(Y(t)),\quad
\Delta_\alpha(t) := \widehat \alpha^\star(t) - \alpha^\star(t),
$$
and define $\Delta Z_i(t):=\widehat Z_i(t)-Z_i(t)$.
Using the expansion
\begin{align*}
	\widehat Z_i(t)
	&= X_i^{\bar \mu}(t)\big(Y_i^{\bar \mu}(t) - \widehat \alpha^\star(t) X_i^{\bar \mu}(t)\big)\\
&= (X_i^\mu(t) - \Delta_X(t))\big(Y_i^\mu(t) - \Delta_Y(t) - (\Delta_\alpha(t) + \alpha^\star(t))(X_i^\mu(t) - \Delta_X(t))\big)
\end{align*}
and subtracting $Z_i(t) = X_i^\mu(t)(Y_i^\mu(t) - \alpha^\star(t)X_i^\mu(t))$ shows that $\Delta Z_i(t)$ is a finite linear combination of terms of the form
$X_i^\mu(t) \Delta_Y(t)$, $X_i^\mu(t)^2 \Delta_\alpha(t)$, $X_i^\mu(t) \Delta_\alpha(t) \Delta_X(t)$,  $Y_i^\mu(t) \Delta_X(t)$, $X_i^\mu(t) \alpha^\star(t) \Delta_X(t)$,  $\Delta_X(t) \Delta_Y(t)$, $\Delta_\alpha(t) \Delta_X(t)$, $\Delta_\alpha(t) \Delta_X(t)^2$, $\alpha^\star(t) \Delta_X(t)^2$.
By Assumptions A.2--A.3 and Theorem 4.5(a) of the main paper, together with Lemma \ref{lem:thm43aux2}, we have
$$
	\|\Delta_X\|_\infty = O_P(n^{-1/2}), \quad  \|\Delta_Y\|_\infty = O_P(n^{-1/2}), \quad \|\Delta_\alpha\|_\infty = O_P(n^{-1/2})
$$
and
$$
	\|X_i^\mu\|_\infty = O_P(1), \quad \|Y_i^\mu\|_\infty = O_P(1), \quad \|\alpha^\star\|_\infty < \infty.
$$
Hence each of the terms from the linear combination above is $O_P(n^{-1/2})$ in the sup-norm. Therefore,
$$
\|\Delta Z_i\|_\infty = \sup_{t\in[0,1]}|\widehat Z_i(t)-Z_i(t)| = O_P(n^{-1/2}).
$$
Similarly, $\|Z_i\|_\infty = \sup_{t \in [0,1]} |Z_i(t)| = O_P(1)$.
Now use the identity
$$
\widehat Z_i(s)\widehat Z_i(t) - Z_i(s)Z_i(t)
= Z_i(s)\Delta Z_i(t) + Z_i(t)\Delta Z_i(s) + \Delta Z_i(s)\Delta Z_i(t).
$$
Hence, by Cauchy-Schwarz,
\begin{align*}
	&\sup_{s,t \in [0,1]}\bigg|\frac{1}{n}\sum_{i=1}^n
\big(\widehat Z_i(s)\widehat Z_i(t) - Z_i(s)Z_i(t)\big)\bigg| \\
&\le \frac{2}{n}\sum_{i=1}^n \|Z_i\|_\infty \|\Delta Z_i\|_\infty
+ \frac{1}{n}\sum_{i=1}^n \|\Delta Z_i\|_\infty^2 \\
&\le 2\Big(\frac{1}{n}\sum_{i=1}^n \|Z_i\|_\infty^2\Big)^{1/2}
\Big(\frac{1}{n}\sum_{i=1}^n \|\Delta Z_i\|_\infty^2\Big)^{1/2} + \frac{1}{n}\sum_{i=1}^n \|\Delta Z_i\|_\infty^2,
\end{align*}
where $\frac{1}{n}\sum_{i=1}^n \|Z_i\|_\infty^2 = O_P(1)$ follows by the law of large numbers because $Z_i$ are i.i.d., and it remains to show $\frac{1}{n}\sum_{i=1}^n \|\Delta Z_i\|_\infty^2 = O_P(n^{-1})$.
Let
$$
	R_n := \|\Delta_X\|_\infty + \|\Delta_Y\|_\infty + \|\Delta_\alpha\|_\infty = O_P(n^{-1/2})
$$
and define
$$
	M_i := 1 + \|X_i^\mu\|_\infty + \|Y_i^\mu\|_{\infty} + \|X_i^\mu\|_\infty^2.
$$
Then,
$\|\Delta Z_i\|_\infty \leq C M_i(R_n + R_n^2 + R_n^3)$ and
$$
	\frac{1}{n}\sum_{i=1}^n \|\Delta Z_i\|_\infty^2 \leq C^2(R_n + R_n^2 + R_n^3)^2 \frac{1}{n}\sum_{i=1}^n M_i^2 = O_P(n^{-1})
$$
by the law of large numbers because $M_i$ are i.i.d. with $\E(M_i^2) < \infty$ by Assumption A.2 of the main paper, and the assertion follows.
\end{proof}

\begin{lemma} \label{lem:thm43aux4}
Let the conditions of Theorem 4.5 of the main paper hold. Then, as $n \to \infty$,
$$
	\sup_{s,t \in [0,1]} \bigg| \frac{1}{n}\sum_{i=1}^n
       \big(Z_i(s)Z_i(t) - \sigma_Z(s,t)\big) \bigg| = o_P(1).
$$
\end{lemma}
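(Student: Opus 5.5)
We treat this as a uniform law of large numbers for the i.i.d.\ $C([0,1]^2)$-valued random elements $W_i(s,t):=Z_i(s)Z_i(t)-\sigma_Z(s,t)$ and invoke the Banach-space strong law. This mirrors the argument behind \eqref{eq:An-sup-op} in the proof of Lemma~\ref{lem:thm43aux2}(c), now on the separable Banach space $C([0,1]^2)$ with the sup-norm.

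The first step is to check that each $Z_i$ is a $C[0,1]$-valued random element. This holds because $Z_i(t)=X_i^\mu(t)(Y_i^\mu(t)-\alpha^\star(t)X_i^\mu(t))$, where $X_i$ and $Y_i$ have continuous paths by Assumption~A.1. The means $\E(X(t))$, $\E(Y(t))$ are continuous, and $\alpha^\star=\sigma_{XY}(\cdot,\cdot)/\sigma_X(\cdot,\cdot)$ is continuous with finite Hölder norm; this was shown in the proof of Lemma~\ref{lem:thm43aux1}, which also used $\inf_t\sigma_X(t,t)>0$. It follows that $(s,t)\mapsto Z_i(s)Z_i(t)$ is a random element of $C([0,1]^2)$.

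The second step is to verify the integrability condition for the strong law, namely
\[
\E\|Z_iZ_i^\top\|_{\infty}=\E\|Z_i\|_\infty^2<\infty .
\]
We bound $\|Z_i\|_\infty\le \|X_i^\mu\|_\infty\|Y_i^\mu\|_\infty+\|\alpha^\star\|_\infty\|X_i^\mu\|_\infty^2$. Then $\E\|Z_i\|_\infty^2\lesssim \E(\|X_i^\mu\|_\infty^2\|Y_i^\mu\|_\infty^2)+\E\|X_i^\mu\|_\infty^4$. By Hölder with exponents $3$ and $3/2$, the first term is at most $(\E\|X\|_\infty^6)^{1/3}(\E\|Y\|_\infty^3)^{2/3}$, which is finite by Assumption~A.2. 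The fourth moment of $X$ is finite for the same reason, and the centering is handled as in \eqref{eq:center-X}. Alternatively, one can use the stronger moments $q>6$ from Theorem~4.5 directly. In particular, $\sigma_Z(s,t)=\E(Z_i(s)Z_i(t))$ is continuous by dominated convergence, so $\E W_i=0$ holds as a Bochner integral in $C([0,1]^2)$.

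The third step is to apply the Banach-space law of large numbers (Corollary~7.10 in \citealt{ledoux1991probability}) to the centered i.i.d.\ elements $W_i$ in the separable Banach space $C([0,1]^2)$. This gives $\|n^{-1}\sum_{i=1}^n W_i\|_\infty\to0$ almost surely, and hence the assertion in probability.

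The main obstacle is the measurability/separability bookkeeping, i.e.\ confirming that $Z_i$ is a Borel random element of $C[0,1]$ with a Bochner-integrable square norm. If one prefers a bracketing argument instead, the fallback is the Hölder bound $\E\|H_i\|_{0,\tilde\kappa}^p<\infty$ from Lemma~\ref{lem:thm43aux1}. Combined with pointwise laws of large numbers on a finite grid and the resulting equicontinuity of $(s,t)\mapsto n^{-1}\sum_i Z_i(s)Z_i(t)$, whose modulus is controlled by $n^{-1}\sum_i\|Z_i\|_{0,\tilde\kappa}\|Z_i\|_\infty=O_P(1)$, this would also yield uniform convergence.
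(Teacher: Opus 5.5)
Your proposal is correct and matches the paper's proof: both apply the Banach-space strong law of large numbers (Corollary 7.10 in \citealt{ledoux1991probability}) to the centered i.i.d.\ elements $Z_i(s)Z_i(t)-\sigma_Z(s,t)$ of $C([0,1]^2)$ after checking $\E\|Z_i\|_\infty^2<\infty$. The only difference is cosmetic: you derive this moment bound directly from Assumption~A.2 via H\"older's inequality, whereas the paper takes it from Lemma~\ref{lem:thm43aux1}.
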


\begin{proof}
Define
$$
	U_i(s,t):=Z_i(s)Z_i(t)-\sigma_Z(s,t), \qquad (s,t)\in[0,1]^2.
$$
Since $Z_i$ has continuous sample paths, $U_i$ is a random element of
$C([0,1]^2)$. Moreover, $U_1,U_2,\ldots$ are i.i.d. and centered in the
separable Banach space $C([0,1]^2)$ equipped with the supremum norm. By
Lemma~\ref{lem:thm43aux1},
$$
	\E\|Z_i\|_\infty^2<\infty.
$$
Therefore,
$$
	\E\|U_i\|_\infty
	\leq
	\E\|Z_i\|_\infty^2+\sup_{s,t\in[0,1]}|\sigma_Z(s,t)|
	\leq
	2\E\|Z_i\|_\infty^2
	<\infty.
$$
The Banach-space strong law of large numbers applied in $C([0,1]^2)$  (Corollary 7.10 in \citealt{ledoux1991probability}) yields
$$
	\bigg\|
	\frac1n\sum_{i=1}^n U_i
	\bigg\|_\infty
	=
	\sup_{s,t\in[0,1]}
	\bigg|
	\frac1n\sum_{i=1}^n
	(Z_i(s)Z_i(t)-\sigma_Z(s,t))
	\bigg|
	\to 0
	\qquad\text{a.s.}
$$
In particular, the convergence also holds in probability.
\end{proof}

\subsection{Proof of Theorem 4.5(b) of the main paper}

For any $s,t\in[0,1]$ we can write
\begin{align*}
  \widehat\sigma_{\widehat\alpha^\star}(s,t)
    - \sigma_{\widehat\alpha^\star}(s,t)
  &= \frac{\widehat\sigma_Z(s,t)}{\widehat\sigma_X(s,s)\widehat\sigma_X(t,t)}
     - \frac{\sigma_Z(s,t)}{\sigma_X(s,s)\sigma_X(t,t)} \\
  &= \frac{\widehat\sigma_Z(s,t) - \sigma_Z(s,t)}
           {\sigma_X(s,s)\sigma_X(t,t)} +\widehat \sigma_Z(s,t)
  \bigg[
  \frac{1}{\widehat\sigma_X(s,s)\widehat\sigma_X(t,t)}
  -\frac{1}{\sigma_X(s,s)\sigma_X(t,t)}
  \bigg].
\end{align*}
Using the decomposition
$$
	\widehat\sigma_Z(s,t) - \sigma_Z(s,t) = \frac{1}{n}\sum_{i=1}^n
       \big(\widehat Z_i(s)\widehat Z_i(t) - Z_i(s)Z_i(t)\big) + \frac{1}{n}\sum_{i=1}^n
       \big( Z_i(s) Z_i(t) - \sigma_Z(s,t)\big)
$$
and $\inf_{t \in [0,1]} \sigma_X(t,t) > 0$,
Lemmas \ref{lem:thm43aux3} and \ref{lem:thm43aux4} imply
$$
		\sup_{s,t \in [0,1]} \bigg|\frac{\widehat\sigma_Z(s,t) - \sigma_Z(s,t)}
           {\sigma_X(s,s)\sigma_X(t,t)}\bigg| = o_P(1).
$$
In addition, Lemma \ref{lem:thm43aux2}(c) yields
$$
	\sup_{s,t \in [0,1]} \bigg|\widehat \sigma_Z(s,t) \bigg[ \frac{1}{\widehat\sigma_X(s,s)\widehat\sigma_X(t,t)} - \frac{1}{\sigma_X(s,s)\sigma_X(t,t)} \bigg] \bigg| = o_P(1).
$$
Consequently, $\sup_{s,t \in [0,1]} |\widehat\sigma_{\widehat\alpha^\star}(s,t)
    - \sigma_{\widehat\alpha^\star}(s,t)| = o_P(1)$.
\hfill$\square$

\clearpage
\bibliographystyle{apalike}
\bibliography{bibfile}

\end{document}